\documentclass[10pt，oneside, reqno]{amsart}

\usepackage{epsfig}
\usepackage{amsthm}
\usepackage{amssymb}
\usepackage{amsmath}
\usepackage{amscd}
\usepackage{color}
\usepackage{esint}
\usepackage{enumitem}
\usepackage{bbm}
\usepackage{mathrsfs}
\usepackage[top=1.1in, bottom=1in, left=1in, right=1in]{geometry}

\usepackage[usenames,dvipsnames]{xcolor}
\usepackage[colorlinks=true, pdfstartview=FitV, linkcolor=blue, citecolor=blue, urlcolor=blue]{hyperref}

\newcommand{\lb}{\varLambda}

\def \<{\langle}
\def \>{\rangle}

\newcommand{\bg}{\begin{equation}}
\newcommand{\ed}{\end{equation}}
\newcommand{\bga}{\begin{eqnarray}}
\newcommand{\eda}{\end{eqnarray}}

\def\cbdu{\par{\raggedleft$\Box$\par}}

\newtheorem {Theorem}  {Theorem}
\newtheorem {Corollary}[Theorem]{\bf Corollary}

\numberwithin{Theorem}{section}

\newtheorem {Lemma}[Theorem]  {Lemma}
\newtheorem {Proposition}[Theorem]{Proposition}
\theoremstyle{definition}
\newtheorem{Definition}[Theorem]{Definition}
\theoremstyle{remark}
\newtheorem{Remark}[Theorem]{\bf Remark}

\def \l{\lambda}
\expandafter\chardef\csname pre amssym.def
at\endcsname=\the\catcode`\@ \catcode`\@=11
\def\undefine#1{\let#1\undefined}
\def\newsymbol#1#2#3#4#5{\let\next@\relax
 \ifnum#2=\@ne\let\next@\msafam@\else
 \ifnum#2=\tw@\let\next@\msbfam@\fi\fi
 \mathchardef#1="#3\next@#4#5}
\def\mathhexbox@#1#2#3{\relax
 \ifmmode\mathpalette{}{\m@th\mathchar"#1#2#3}%
 \else\leavevmode\hbox{$\m@th\mathchar"#1#2#3$}\fi}
\def\hexnumber@#1{\ifcase#1 0\or 1\or 2\or 3\or 4\or 5\or 6\or 7\or 8\or
 9\or A\or B\or C\or D\or E\or F\fi}

\font\teneufm=eufm10 \font\seveneufm=eufm7 \font\fiveeufm=eufm5
\newfam\eufmfam
\textfont\eufmfam=\teneufm \scriptfont\eufmfam=\seveneufm
\scriptscriptfont\eufmfam=\fiveeufm

\catcode`\@=\csname pre amssym.def at\endcsname

\newcounter{remark}
\numberwithin{equation}{section}
\numberwithin{figure}{section}

\def \grad {\nabla}

\newcommand{\divv}{{\text {div}}\,}
\newcommand{\curl}{\nabla \times}

\newcommand{\e}{\epsilon}
\renewcommand{\d}{\delta}
\newcommand{\om}{\omega}

\renewcommand{\k}{\kappa}
\renewcommand{\l}{\lambda}
\newcommand{\Om}{\Omega}
\renewcommand{\a}{\alpha}

\renewcommand{\b}{\beta}

\newcommand{\gm}{\gamma}
\newcommand{\s}{\sigma}
\newcommand{\Gm}{\Gamma}
\newcommand{\R}{\mathbf{R}}

\newcommand{\les}{\lesssim}

\newcommand{\Ff}{{\mathcal F}}

\newcommand{\Gg}{{\mathcal G}}

\newcommand{\Pp}{{\mathcal P}}

\newcommand{\Ss}{{\mathcal S}}
\newcommand{\Tt}{{\mathcal T}}

\newcommand{\barB}{\overline{B}}

\newcommand{\tFf}{\widetilde{\Ff}}
\newcommand{\tF}{\widetilde{\F}}
\newcommand{\tP}{\widetilde{\P}}

\newcommand{\tB}{\widetilde{B}}
\newcommand{\tW}{\widetilde{\W}}
\newcommand{\tOm}{\widetilde{\Om}}

\newcommand{\tSs}{\widetilde{\Ss}}

\def  \R   {{\mathbb R}}
\def  \Z   {{\mathbb Z}}
\def  \P   {{\mathbb P}}
\def  \E    {{\mathbb E}}
\def  \N   {{\mathbb N}}
\def  \F   {{\mathbb F}}

\def  \T   {{\mathbb T}}
\def  \W   {{\mathbb W}}

\def  \haf  {{\frac{1}{2}}}
\def  \p   {\partial}

\def  \tr   {\operatorname{Tr}}

\def  \Dq    {\Delta_q}

\def  \sumk  {\sum_{k=1}^\infty}
\def  \sumj  {\sum_{j=1}^\infty}

\def  \chir   {\chi_r}
\def  \tphi   {\widetilde{\phi}}
\def  \tJ     {\widetilde{J}}
\def  \tw     {\widetilde{W}}

\def  \wtDq   {\widetilde{\Delta}_q}
\def  \wtDl   {\widetilde{\Delta}_l}
\def  \Dl     {\Delta_l}
\def  \smh    {{s-\frac{1}{2}}}

\newcommand\twonorm[1]{\lVert#1\rVert_{L^2(\T^3)}}

\newcommand\rnorm[1]{\Vert#1\Vert_{L^r(\T^3)}}
\newcommand\sixnorm[1]{\Vert#1\Vert_{L^6(\T^3)}}
\newcommand\Linfnorm[1]{\Vert#1\Vert_{L^\infty(\T^3)}}

\newcommand\Honenorm[1]{\Vert#1\Vert_{H^1(\T^3)}}
\newcommand\Hsnorm[1]{\Vert#1\Vert_{H^s(\T^3)}}

\newcommand\Woneinfnorm[1]{\left \lVert #1 \right \rVert_{W^{1,\infty}}}

\newcommand\LinfNorm[1]{\left \lVert#1\right \rVert_{L^\infty(\T^3)}}

\newcommand\HsNorm[1]{\left\lVert#1\right \rVert_{H^s(\T^3)}}
\newcommand\HssNorm[1]{\left\lVert#1\right \rVert_{H^{s+1}(\T^3)}}
\newcommand\HsaNorm[1]{\left\lVert#1\right \rVert_{H^{s+\frac{\a}{2}}(\T^3)}}
\newcommand\HsmhNorm[1]{\left\lVert#1\right \rVert_{H^{s-\haf}(\T^3)}}

\newcommand{\Ltwoinner}[1]{\left < #1 \right >}
\newcommand{\ltwoHssnorm}[1]{\left \lVert  #1 \right \rVert_{\ell(\N,H^{s+1}(\T^3))}}

\def\build#1_#2^#3{\mathrel{\mathop{\kern 0pt#1}\limits_{#2}^{#3}}}

\begin{document}

\title[Three-Dimensional Stochastic EMHD System]{The Three-Dimensional Stochastic EMHD System: Local Well-Posedness and Maximal Pathwise Solutions}


\author[Ruimeng Hu]{Ruimeng Hu\textsuperscript{1}}\thanks{\textsuperscript{1}Department of Mathematics, Department of Statistics and Applied Probability, University of California, Santa Barbara, Santa Barbara, CA 93106-3080, USA. Email: 
rhu@ucsb.edu}

\author[Qirui Peng]{Qirui Peng\textsuperscript{2}}
\thanks{\textsuperscript{2}Department of Mathematics, University of California, Santa Barbara, Santa Barbara, CA 93106-3080, USA.
Emails: \{qpeng9, xy6\}@ucsb.edu}

\author[Xu Yang]{Xu Yang\textsuperscript{2}}

\thanks{}





\begin{abstract} 
We study the three-dimensional stochastic electron-magnetohydrodynamics (EMHD) system with fractional dissipation on the torus, driven by Stratonovich transport noise acting through divergence-free first-order operators and producing an It\^o correction while preserving the transport-type structure of the Hall nonlinearity. The Hall term is derivative intensive, and in the stochastic setting it needs to be controlled together with commutators generated by the transport operators. We develop a high-order Sobolev energy method based on Littlewood--Paley analysis and refined commutator estimates, yielding uniform bounds for Galerkin approximations in $H^s$ with $s>\tfrac{5}{2}$ together with suitable time regularity. Using stochastic compactness and identification of limits, we construct martingale solutions for initial data in $L^2(\Omega;H^s)$. Pathwise uniqueness is established through cancellations in the Hall term and a stochastic Gr\"onwall argument, and the Yamada--Watanabe--type theorem then yields local pathwise well-posedness and maximal pathwise solutions.

\bigskip

\noindent \textbf{Keywords.} Electron-magnetohydrodynamics, Stratonovich transport noise, well-posedness, maximal pathwise solutions.

\medskip

\noindent \textbf{MSC Classification.} 35R60, 35Q35, 60H15, 76M35.
\end{abstract}

\maketitle

\section{Introduction}\label{Sec_Intro}  
Electron-magnetohydrodynamics arises as a fundamental model in plasma physics for describing the interaction between magnetic fields and charged fluid motion. A broader framework is provided by the generalized Hall-magnetohydrodynamics (Hall-MHD) equations:
\begin{subequations}
\begin{align}
\p_t u + (u \cdot \grad) u + \grad p &= \Delta u + (B \cdot \grad) B, \label{eq:Hall-MHD_1} \\
\p_t B + (u \cdot \grad) B + \curl \bigl( (\curl B) \times B \bigr) + \lb^{\a}B &= B \cdot \grad u, \label{eq:Hall-MHD_2} \\
\nabla \cdot u = \nabla \cdot B &= 0, \label{eq:Hall-MHD_3} \\
u(0) &= u_0, \label{eq:Hall-MHD_4}\\
B(0) &= B_0, \label{eq:Hall-MHD_5}
\end{align}
\end{subequations}
where we consider the solutions $(u,B)$ on the domain $[0,\infty) \times \T^3$. Here $u = u(x,t)$ and $p = p(x,t)$ represent the velocity fields of the fluid and scalar pressure, respectively.  $B = B(x,t)$ denotes the magnetic field. The parameter $\a > 0$ determines the strength of magnetic resistance and the generalized Laplacian $\lb^\a : = (-\Delta)^\frac{\a}{2}$ is defined as a Fourier multiplier:
\begin{equation}\label{def:Lambda}
\widehat{\lb^\a B} (k) = |k|^{\a} \widehat{B}. 
\end{equation}
A central analytical difficulty in establishing the well-posedness of the Hall-MHD system arises from the highly nonlinear Hall term $\curl (\curl B) \times B$. A simplified model of the system $\eqref{eq:Hall-MHD_1} - \eqref{eq:Hall-MHD_5}$ that still captures the nonlinear Hall term is known as the Electron-MHD (EMHD):
\begin{align}
\p_t B + \curl \bigl( (\curl B) \times B \bigr) + \lb^{\a}B &= 0, \label{eq:EMHD_1} \\
\grad \cdot B &= 0. \label{eq:EMHD_2}
\end{align}
In this paper, we study the stochastic EMHD system in the presence of Stratonovich transport noise: 
\begin{subequations}\label{eq:EMHD_Stratonovich}
\begin{align}
dB + \bigl ( \curl ( (\curl B) \times B ) + \lb^{\a}B \bigr ) dt &= \sumk \left( c_k \cdot \grad B \right) \circ dW^k, \label{eq:EMHD_Stratonovich_1} \\
\grad \cdot B &= 0, \label{eq:EMHD_Stratonovich_2} \\
B(0) &= B_0, \label{eq:EMHD_Stratonovich_3} 
\end{align}
\end{subequations}
where the sequence $\{W^k\}_{k \ge 1}$ denotes a family of independent standard Brownian motions, while $\{c_k\}_{k \ge 1}$ are divergence-free vector fields corresponding to the coefficients of the transport noise. 

It admits the equivalent It\^o form (see \cite[Section 8]{AHHS25}),
\begin{subequations}\label{eq:EMHD_Ito}
\begin{align}
dB + \bigl ( \curl ( (\curl B) \times B ) + \lb^{\a}B \bigr ) dt &= \haf \sum_{k =1}^\infty  \Tt^2_k B dt + \sum_{k=1}^\infty \Tt_k B dW^k, \label{eq:EMHD_Ito_1} \\
\nabla \cdot B &= 0, \label{eq:EMHD_Ito_2} \\
B(0) &=  B_0, \label{eq:EMHD_Ito_3}
\end{align}
\end{subequations}
where the linear operator $\Tt_k$ is defined by
\[
\Tt_k B := (c_k \cdot \grad ) B.
\]
This It\^o formulation highlights the analytical structure of the problem: the transport noise generates both first-order stochastic terms and second-order correction operators, which need to be treated together with the derivative-intensive Hall nonlinearity. 

The main analytical difficulty of the present work lies precisely in this interplay between the Hall nonlinearity and the stochastic transport noise. Unlike classical stochastic fluid models, the Hall term $\curl\left((\curl B)\times B\right)$ contains one additional derivative, which leads to a substantial loss of regularity in high-order Sobolev estimates. At the same time, the stochastic terms generate additional commutator structures that have to be controlled at the same regularity level. This difficulty is further amplified in the regime $\alpha<2$, where the fractional resistance is not sufficiently strong to directly compensate for the derivative loss.

To overcome this obstacle, we develop a cutoff approximation framework together with refined high-order energy estimates based on Littlewood--Paley analysis and commutator estimates. More specifically, as a key component of our analytical framework, we introduce the following cutoff approximation scheme, i.e., we choose a positive non-increasing function $\chi_r \in C^\infty(\R)$ as
\begin{align}\label{def:cutoff}
\chi_r (x) = \begin{cases}
1,\ \ \text{if} \ \  |x| \leq \frac{r}{2},\\
0, \ \ \text{if}\ \  |x| > r.
\end{cases}
\end{align}
We then consider the cutoff of the system \eqref{eq:EMHD_Ito}: 
\begin{align}
dB + \bigl (\chi_r^2 \curl ( (\curl B) \times B ) + \lb^{\a}B \bigr ) dt &= \haf \sum_{k =1}^\infty  \Tt^2_k B dt + \sum_{k=1}^\infty \Tt_k B dW^k, \label{eq:EMHD_cutoff_Ito_1} \\
B(0) &=  B_0, \label{eq:EMHD_cutoff_Ito_2} \\
\nabla \cdot B &= 0, \label{eq:EMHD_cutoff_Ito_3}
\end{align}
where we write $\chi_r := \chi_r(\Woneinfnorm{B})$ for simplicity.

\smallskip

\noindent {\bf Main result.} The above approximation framework allows us to establish the following local well-posedness result in the pathwise sense.

\begin{Theorem}\label{Thm:main}
Suppose that $s > 3$, $\a \in (1,2]$ and the noise coefficients satisfy the regularity assumption \eqref{eq:noise_assumption}. Let $B_0 \in L^2(\Om, H^s)$ be any initial data. Then there exists a unique maximal pathwise solution $\bigl( B, (\eta_n)_{n \geq 1}, \xi \bigr)$ to the system \eqref{eq:EMHD_Stratonovich}.
\end{Theorem}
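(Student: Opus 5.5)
The plan follows the standard scheme for stochastic fluid equations with transport noise (Glatt-Holtz--Ziane, Debussche--Glatt-Holtz--Temam): first build global martingale solutions of the cutoff system \eqref{eq:EMHD_cutoff_Ito_1}--\eqref{eq:EMHD_cutoff_Ito_3}, then prove pathwise uniqueness, then use a Yamada--Watanabe/Gyöngy--Krylov argument to get global pathwise solutions of the cutoff problem, and finally remove the cutoff by stopping times to produce the maximal pathwise solution of \eqref{eq:EMHD_Stratonovich}.

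\textbf{Step 1: a priori bounds for the cutoff Galerkin system.} Let $P_n$ be the projection onto Fourier modes $|k|\le n$ restricted to divergence-free fields. Consider the finite-dimensional SDE
\[
dB_n+\bigl(\chi_r^2 P_n\curl((\curl B_n)\times B_n)+\lb^\a B_n\bigr)dt=\tfrac12\sum_k P_n\Tt_k^2B_n\,dt+\sum_k P_n\Tt_kB_n\,dW^k .
\]
Apply It\^o's formula to $\|\lb^s B_n\|_{L^2}^2$, splitting into Littlewood--Paley blocks $\Dq$. There are two main contributions.

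\emph{Hall term.} Write it as $\curl((\curl B)\times B)=(B\cdot\grad)\curl B-(\curl B\cdot\grad)B$ (using divergence-freeness). The top-order piece $\langle \Dq(B\cdot\grad \curl B),\Dq B\rangle$ is handled by a commutator $[\Dq,B\cdot\grad]$ plus the cancellation $\langle B\cdot\grad\curl \Dq B,\Dq B\rangle$. The latter equals $\langle \curl(\Dq B\times\ldots)\rangle$-type terms and vanishes after integration by parts using the antisymmetry $\langle \curl(J\times B),J'\rangle$ at top order. What remains loses one half derivative relative to $\|B\|_{H^s}$. It is bounded by $C\|B\|_{W^{1,\infty}}\|B\|_{H^{s+\frac12}}^2$, and interpolated as $\|B\|_{H^{s+\frac12}}^2\le \|B\|_{H^s}^{2-1/\a}\|B\|_{H^{s+\frac\a2}}^{1/\a}$, which works because $\a>1$. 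It is absorbed into the dissipation $\|B\|_{H^{s+\a/2}}^2$ by Young. The cutoff $\chi_r^2$ makes the resulting constant depend only on $r$, so the estimate closes linearly in $\|B\|_{H^s}^2$.

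\emph{Noise.} The It\^o correction $\langle \Lambda^s\Tt_k^2B,\Lambda^sB\rangle+\|\Lambda^s\Tt_kB\|^2$ loses two derivatives individually but cancels to $O(\|B\|_{H^s}^2)$ through commutators $[\Lambda^s,\Tt_k]$, using \eqref{eq:noise_assumption} and $\mathrm{div}\,c_k=0$. The martingale term is controlled via BDG, again with a commutator, since $\langle\Lambda^s\Tt_kB,\Lambda^sB\rangle=\langle[\Lambda^s,\Tt_k]B,\Lambda^sB\rangle$.

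Together these give $\E\sup_t\|B_n\|_{H^s}^{p}+\E\int\|B_n\|_{H^{s+\a/2}}^2\le C(r,T,\E\|B_0\|_{H^s}^p)$, plus a fractional time-regularity bound in $W^{\gamma,p}(0,T;H^{s-2})$.

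\textbf{Step 2: martingale solutions and uniqueness.} Tightness in $C([0,T];H^{s'})\cap L^2(0,T;H^{s'+\a/2})$ for $s'<s$ (where $s'>5/2$, so $W^{1,\infty}$ is continuous and the cutoff passes to the limit) follows from Aubin--Lions. Skorokhod--Jakubowski and martingale identification then give a martingale solution of the cutoff system. Pathwise uniqueness is proved by estimating the difference $w=B^1-B^2$ in $L^2$, or in $H^1$ to absorb the Hall term. The cancellation $\langle \curl((\curl w)\times B^1),w\rangle=0$ at top order, together with $s>3$, controls $\|B^i\|_{W^{2,\infty}}$-type coefficients. The cutoff difference $|\chi_r(B^1)-\chi_r(B^2)|\le C\|w\|_{W^{1,\infty}}$ is the delicate term. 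It is handled by working at the $H^{s'}$ level with $s'>5/2$ and dissipation interpolation, closed by the stochastic Gr\"onwall lemma. Gyöngy--Krylov then yields a global pathwise solution $B^r$.

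\textbf{Step 3: removing the cutoff.} First localise the initial data, $B_0=\sum_m B_0\mathbbm 1_{\{m-1\le\|B_0\|_{H^s}<m\}}$, to reduce to bounded data. Define $\eta_n=\inf\{t:\|B^r(t)\|_{W^{1,\infty}}\ge r/2-1/n\}$ (with $r$ chosen relative to the data bound). Then $B^r=B$ solves \eqref{eq:EMHD_Ito} on $[0,\eta_n]$. Uniqueness lets the solutions for different $r$ be glued consistently, and $\xi=\lim\eta_n$ with $\sup_{t<\xi}\|B\|_{W^{1,\infty}}=\infty$ on $\{\xi<\infty\}$ gives maximality. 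Uniqueness of the maximal solution follows from local uniqueness via the standard stopping-time argument.

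The main obstacle is Step 1's Hall estimate: the top-order cancellation must be extracted so that only a half-derivative loss remains, compatible with $\a>1$. The uniqueness estimate with the cutoff difference is the secondary difficulty.
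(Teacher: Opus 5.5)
Your architecture is the paper's: a Galerkin scheme with the $W^{1,\infty}$ cutoff, and a Littlewood--Paley/commutator $H^s$ estimate in which the Hall term, after the top-order cancellation, leaves $\|\nabla B\|_{L^\infty}\|B\|_{H^{s+1/2}}^2$, absorbed because $\alpha>1$. The It\^o correction and martingale term are handled by commutators, followed by compactness and Skorokhod, pathwise uniqueness for the cutoff system, Yamada--Watanabe, and stopping times after localising $B_0$. Step 1 and the existence part are fine up to one slip. The interpolation is $\|B\|_{H^{s+1/2}}^2\le\|B\|_{H^s}^{2-2/\alpha}\|B\|_{H^{s+\alpha/2}}^{2/\alpha}$, and it is $2/\alpha<2$ that uses $\alpha>1$.

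The gap is the uniqueness step. An $L^2$ or $H^1$ estimate of $w=B^1-B^2$ does handle the Hall difference. Indeed $\langle\curl((\curl w)\times B^2),w\rangle=0$, and $\curl((\curl B^1)\times w)$ costs a $W^{2,\infty}$-type norm of $B^1$, available in $L^2_t$ through $H^{s+\alpha/2}$ (not through $H^s$). But such an estimate cannot control $|\chi_r(\|B^1\|_{W^{1,\infty}})-\chi_r(\|B^2\|_{W^{1,\infty}})|\lesssim\|w\|_{W^{1,\infty}}$. By itself it gives $B^1=B^2$ only up to the first time one solution reaches $\|B^i\|_{W^{1,\infty}}=r/2$, whereas Yamada--Watanabe needs uniqueness for the cutoff system on all of $[0,T]$. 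So the whole difference estimate must run at a single level $H^{s'}\hookrightarrow W^{1,\infty}$.

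The paper takes $s'=s-\tfrac12$, which is where $s>3$ enters. It combines the $L^2$ and $\dot H^{s-1/2}$ identities and splits the cutoff via $a^2b-c^2d=(a-c)(ab+cd)+ac(b-d)$. It then uses $\langle(\curl\Lambda^{s-1/2}w)\times B^2,\curl\Lambda^{s-1/2}w\rangle=0$ with Lemma~\ref{lemma:commutator_est_0}, and closes with the weight $U_t=\exp\bigl(-C_r\int_0^t(1+\|B^1\|_{H^{s+\alpha/2}}^2+\|B^2\|_{H^{s+\alpha/2}}^2)\,d\tau\bigr)$.

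Your phrase ``$s'>5/2$ and dissipation interpolation'' leaves the derivative count at level $s'$ unchecked, and that count is a real constraint. Consider $\langle\Lambda^{s'}((\curl B^1)\times w),\Lambda^{s'}\curl w\rangle$ with $B^1$ at high and $w$ at low frequency, and likewise the cutoff-difference pairing $\langle\Lambda^{s'}((\curl B^1)\times B^1),\Lambda^{s'}\curl w\rangle$. In both, $2s'+2$ derivatives fall on the high-frequency $B^1$ and on $w$, with no cancellation. Since only $B^1\in L^2_tH^{s+\alpha/2}$ and $w\in L^2_tH^{s'+\alpha/2}$ are available, you need $s'+2-\tfrac{\alpha}{2}\le s+\tfrac{\alpha}{2}$, i.e. $s'\le s-(2-\alpha)$.

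Together with $s'>5/2$ this forces $s>\tfrac92-\alpha$. Interpolating $\|w\|_{W^{1,\infty}}$ against the dissipation to allow $s'<5/2$ gives the same threshold. For $s'=s-\tfrac12$ the condition means $\alpha\ge\tfrac32$.

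The paper's displayed $\dot H^{s-1/2}$ bounds pair against $\|\Lambda^{s-1/2}\curl\overline{B}\|\approx\|\overline{B}\|_{H^{s+1/2}}$ and absorb it into the dissipation. That identity only supplies $\|\overline{B}\|_{H^{s-1/2+\alpha/2}}$, which dominates $\|\overline{B}\|_{H^{s+1/2}}$ only when $\alpha=2$, so the point needs care there too.

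As written, your Step 2 does not give pathwise uniqueness on the full range $\alpha\in(1,2]$, $s>3$. You must fix $s'$ and verify both conditions. For $\alpha<\tfrac32$ and $3<s\le\tfrac92-\alpha$, this route needs either more regularity or a treatment of the cutoff that only requires uniqueness before it becomes active.
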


\noindent {\bf Related literature.}
On the deterministic side, the well-posedness theory for Hall--MHD and EMHD has been extensively studied over the past decade; we refer the reader to \cite{CDL14,CWW15,WZ17,Dai21} and the references therein. More recently, stochastic effects in EMHD-type models have begun to receive attention. In \cite{hu2025well}, we studied a three-dimensional EMHD model without resistivity, in which the resistive mechanism is replaced by multiplicative noise. In that setting, stochastic perturbations were shown to restore local well-posedness in suitable Gevrey spaces and global well-posedness with high probability for small initial data. In contrast, the stochastic theory for EMHD systems with transport noise and fractional dissipation remains largely unexplored. On the stochastic fluid side, transport noise was originally introduced by Kraichnan (see \cite{kraichnan1968small,kraichnan1994anomalous}) to model turbulent effects such as anomalous diffusion, and has since been investigated in a variety of stochastic fluid equations, including the stochastic Navier--Stokes equations \cite{mikulevicius2001equations,MR04,hu2023pathwise,abdo2023long} and the primitive equations \cite{agresti2023primitive,AHHS25,HLL26,hu2023local,hu2025regularization}. This paper is of a different nature from \cite{hu2025well}: rather than replacing the resistive term with multiplicative noise, we consider Stratonovich transport noise acting directly on the magnetic field dynamics and establish local pathwise well-posedness together with the existence of maximal pathwise solutions. This requires a new analytical framework that handles simultaneously the derivative-intensive Hall nonlinearity and the commutator structures generated by the transport noise.

The rest of the paper is organized as follows. Section \ref{Sec_Prelim} recalls the function spaces, Littlewood--Paley theory, probability space, and solution concepts used throughout the paper. In Section \ref{Sec:existence}, we prove the existence of martingale solutions by deriving uniform energy estimates for the Galerkin approximations and passing to the limit through a compactness argument. Section \ref{Sec:proof_of_main_Thm} establishes pathwise uniqueness and completes the proof of the main result. We conclude with a brief discussion in Section \ref{Sec:conclusion}. Finally, additional technical lemmas and background from stochastic calculus are collected in Appendices \ref{appendix:functional_analysis}--\ref{appendix:commutator_estimates}.

\section{Preliminary}\label{Sec_Prelim}
In this section, we recall the function spaces, Littlewood--Paley theory, the probability space, and the solution concepts that will be used throughout the paper.
\subsection{Function spaces} 
For a function $f$ with domain $\T^3:= \R^3/\Z^3$ and $p \in \left[1,\infty \right)$, define its $L^p$-norm by
\[
\| f \|_{L^p(\T^3)} = \biggl( \int_{\T^3} |f(x)|^p dx \biggr)^\frac{1}{p},
\]
and $L^\infty$-norm by 
\[
\Linfnorm{f} := \inf \left \{ \l > 0: \mu \bigl( \{ x: |f(x)| > \l\} \right) =0 \bigr \}.
\]

If $f,g \in L^2(\T^3)$, we denote their $L^2$-inner product by
\[
\left<f,g \right> = \int_{\T^3} f(x) g(x) dx.
\]

Furthermore, the Fourier expansion of $f \in L^2(\T^3)$ is given by
\[
f(x) = \sum_{k \in \Z^3} \widehat{f}_k e^{2\pi i k \cdot x}, \ \ \ \widehat{f}_k := \int_{\T^3} f(x) e^{-2\pi i k \cdot x} dx,
\]
and we call $\widehat{f}_k$ the $k^{\text{th}}$ Fourier coefficient. 

For $s \in \R$, we write $H^s(\T^3)$ for the Sobolev space on $\T^3$ with norm 
\[
\Hsnorm{f} := \biggl( \sum_{k \in \Z^3} \left(1 + |k|^{2s} \right) |\widehat{f}_k|^2 \biggr)^\haf.
\]

If $f$ has zero mean, then its $H^s$-norm is equivalent to the semi-norm $\dot{H}^s$ given by
\[
\|f \|_{\dot{H}^s(\T^3)} := \biggl( \sum_{k \in \Z^3} |k|^{2s} |\widehat{f}_k|^2 \biggr)^\haf. 
\]

Finally, we define $H$ as the space of $L^2$ function that are divergence-free,
\[
H:= \left\{ f \in L^2 : \divv f = 0 \right \}.
\]

\subsection{Littlewood--Paley theory}\label{LP_Intro}
We present a short overview of Littlewood–Paley theory on $\T^3$. For a more thorough exposition, the reader may consult the classical references by Bahouri, Chemin, and Danchin~\cite{bahouri2011fourier} and Grafakos~\cite{grafakos2008classical}. We begin by fixing a nonnegative radial function $\chi \in C_0^\infty(\R^3)$ satisfying
\begin{equation} \label{def:chi}
\chi(\xi) :=
\begin{cases}
1, & \text{for } |\xi| \le \tfrac{3}{4},\\
0, & \text{for } |\xi| \ge 1.
\end{cases}
\end{equation}
Next we define
\begin{equation}\label{def:varphi}
\varphi(\xi) := \chi(\xi/2) - \chi(\xi),  
\end{equation}
and let
\begin{equation*}
\varphi_q(\xi) :=
\begin{cases}
\varphi(\l_q^{-1}\xi), & q \ge 0,\\
\chi(\xi), & q = -1,
\end{cases}
\end{equation*}
where $\l_q = 2^q$. In Fourier space, this construction yields a dyadic partition of unity given by the family $\{\varphi_q\}_{q\ge -1}$. For a tempered distribution $u$ on $\T^3$, we define its $q$-th Littlewood–Paley projection by
\[
  \Delta_q u(x) := \sum_{k\in\Z^3} \hat{u}(k)\,\varphi_q(k)\,e^{ 2\pi ik \cdot x}.
\]
With this definition, one has
\[
u = \sum_{q=-1}^\infty \Delta_q u,
\]
in the sense of distribution. For every $q \in \mathbb{N}$, we introduce the cutoff operator
\[
\Ss_q u := \sum_{j=-1}^q \Delta_j u.
\]

\noindent The $H^s$-norm of $u$ can be equivalently characterized by the Littlewood--Paley decomposition as
\begin{equation}\label{def_Hs_norm}
    \|u\|_{H^s(\T^3)} := \biggl(\sum_{q=-1}^\infty \lambda_q^{2s}\|\Dq u\|_{L^2(\T^3)}^2\biggr)^{1/2},
\end{equation}
for any $u \in H^s(\T^3)$ and $s\in\mathbb{R}$. For notational convenience, we define
\begin{equation*}
\wtDq u := \Delta_{q-1} u+\Delta_{q} u+ \Delta_{q+1} u.
\end{equation*}

\noindent We recall the Bernstein inequalities satisfied by each dyadic block in the Littlewood--Paley decomposition.
\begin{Lemma}\label{lemma:Bernstein}(Bernstein's inequality) 
Let $d$ be the spatial dimension. Let $1 \le s \le r \le \infty$ and $k \ge 0$. Then for any tempered distribution $u$,
\bg\label{Bern1}
\lambda_q^{k}\|\Dq u\|_{L^r (\T^d)} \lesssim \|\nabla^k \Dq u\|_{L^r (\T^d)}\lesssim \lambda_q^{k+d(\frac{1}{s}-\frac{1}{r})}\|\Dq u\|_{L^s(\T^d)}.
\ed
\end{Lemma}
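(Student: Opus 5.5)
The plan is the standard frequency-localization argument: represent $\Dq u$ as a convolution against a rescaled periodic kernel, then apply Young's inequality. Fix a smooth radial $\theta\in C_0^\infty(\R^d)$ with $\theta\equiv 1$ on $\{\tfrac12\le|\xi|\le 2\}$ and support in $\{\tfrac14\le|\xi|\le 4\}$. Fix also $\tilde\chi\in C_0^\infty$ with $\tilde\chi\equiv1$ on $\{|\xi|\le1\}$. Since $\mathrm{supp}\,\varphi_q\subset\{\tfrac34\l_q\le|\xi|\le 2\l_q\}$ for $q\ge0$, we have $\varphi_q(k)=\theta(k/\l_q)\varphi_q(k)$. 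Hence
\[
\Dq u=\psi_q*\Dq u,\qquad \psi_q(x):=\sum_{n\in\Z^d}\theta(n/\l_q)e^{2\pi i n\cdot x}=\sum_{m\in\Z^d}\l_q^d\check\theta\bigl(\l_q(x+m)\bigr),
\]
where the second expression follows from the Poisson summation formula. For $q=-1$ we use $\tilde\chi$ in place of $\theta$. Here "$\lesssim$" means constants independent of $q$ and $u$.

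\textbf{Right inequality.} Differentiating gives $\nabla^k\Dq u=(\nabla^k\psi_q)*\Dq u$. Young's inequality on $\T^d$ with $1+\tfrac1r=\tfrac1p+\tfrac1s$ (so $p\ge1$ because $s\le r$) yields $\|\nabla^k\Dq u\|_{L^r}\le\|\nabla^k\psi_q\|_{L^p(\T^d)}\|\Dq u\|_{L^s}$. It remains to show
\[
\|\nabla^k\psi_q\|_{L^p(\T^d)}\lesssim\l_q^{k+d(1-\frac1p)}=\l_q^{k+d(\frac1s-\frac1r)}.
\]
Note that $\nabla^k\psi_q(x)=\sum_m\l_q^{d+k}(\nabla^k\check\theta)(\l_q(x+m))$. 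For $p=1$, the torus $L^1$ norm is bounded by the $L^1(\R^d)$ norm of $\l_q^{d+k}(\nabla^k\check\theta)(\l_q\cdot)$, which equals $\l_q^k\|\nabla^k\check\theta\|_{L^1}$. For $p=\infty$, the Schwartz decay of $\check\theta$ together with $\l_q\ge\tfrac12$ makes $\sum_m|\nabla^k\check\theta(\l_q(x+m))|$ bounded uniformly in $q$ and $x$, giving the bound $\l_q^{d+k}$. Hölder interpolation between these two endpoints gives the bound for intermediate $p$.

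\textbf{Left inequality.} I take $k$ to be an integer; fractional $k$ is handled identically with $\lb^k$ replacing $\nabla^k$. On the annulus we invert the derivative via
\[
\theta(\xi)=\sum_j\frac{-i\,\xi_j\,\theta(\xi)}{|\xi|^2}\,(i\xi_j),
\]
where the factor $\xi_j\theta(\xi)/|\xi|^2$ is smooth because $\theta$ vanishes near the origin. Rescaling this identity gives
\[
\Dq u=\l_q^{-1}\sum_j g_{q,j}*\partial_j\Dq u,
\]
where $g_{q,j}$ is the periodization of $\l_q^d\check m_j(\l_q\cdot)$ with $m_j(\xi)=-i\xi_j\theta(\xi)/(2\pi|\xi|^2)$. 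The same periodization bound as above gives $\|g_{q,j}\|_{L^1(\T^d)}\lesssim1$. Young's inequality with exponent $1$ then gives $\l_q\|\Dq u\|_{L^r}\lesssim\|\nabla\Dq u\|_{L^r}$. Iterating $k$ times, each time on the derivative $\partial_j\Dq u$, which has the same Fourier support, gives the claim.

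\textbf{Main obstacle.} The step that needs care is $q=-1$ in the left inequality. There $\Delta_{-1}u$ contains the zero mode; in fact, because $\chi=1$ on $|\xi|\le\tfrac34$, $\Delta_{-1}u$ is just the mean of $u$. Consequently $\nabla^k\Delta_{-1}u=0$ for $k\ge1$, and the lower bound can only hold if $u$ has zero mean, or trivially when $k=0$. I would state the left inequality for $q\ge0$ (or for mean-zero $u$), which is how it is used later. Otherwise, the only real work is the uniform-in-$q$ control of the periodized kernels, and that follows from the Schwartz decay of $\check\theta$ together with $\l_q\gtrsim1$.
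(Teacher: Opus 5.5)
Your argument is correct and is essentially the standard kernel-plus-Young proof from Bahouri--Chemin--Danchin. The paper does not prove this lemma; it only cites that reference, and your version transplants the argument to $\T^d$ via Poisson summation so that the periodized kernels have $q$-uniform $L^1$ and $L^\infty$ bounds. Your caveat about $q=-1$ is also right and worth keeping: because $\chi(0)=1$ and $\chi$ vanishes for $|\xi|\ge 1$, $\Delta_{-1}u=\widehat{u}_0$ is the mean of $u$, so the left inequality as literally stated fails for $k\ge 1$ unless $u$ has zero mean. It should therefore be restricted to $q\ge 0$ (the annulus case), while the right inequality is trivial at $q=-1$.
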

\noindent 
We will make use of the following paraproduct formula to estimate the Hall-term later on. 
\begin{Lemma}(Bony's paraproduct formula)\label{lemma:Bony}
Let $u$ and $v$ be tempered distributions. Then
\begin{equation*}
u\cdot v
=\sum_{l = -1}^\infty \big(\Ss_{l-2} u\cdot  \Delta_l v\big)
+\sum_{l = -1}^\infty \big(\Dl u \cdot \Ss_{l-2} v \big)
+\sum_{l=-1}^\infty \big(\Dl u \cdot \wtDl v\big).
\end{equation*}
\end{Lemma}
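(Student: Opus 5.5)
The statement in question is the Bony paraproduct decomposition (Lemma~\ref{lemma:Bony}). I will prove it by expanding both factors into Littlewood--Paley blocks and regrouping the double sum according to which frequency index is larger.

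\emph{Step 1: the double sum.} Since $u=\sum_{j\ge -1}\Delta_j u$ and $v=\sum_{l\ge -1}\Delta_l v$ in the sense of distributions, I write
\[
u\cdot v=\sum_{j,l\ge -1}\Delta_j u\cdot \Delta_l v .
\]
On $\T^3$ it is simplest to justify this first for trigonometric polynomials, where every sum is finite. The general identity then follows by density, for any pair $(u,v)$ for which the product is defined, e.g. $u\in H^{s_1}$, $v\in H^{s_2}$ with $s_1+s_2>0$, or one factor smooth. This is the only setting in which the lemma is used later.

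\emph{Step 2: partition of the index set.} I split $\{(j,l): j,l\ge -1\}$ into three disjoint pieces:
\[
\{j\le l-2\},\qquad \{l\le j-2\},\qquad \{|j-l|\le 1\}.
\]
\begin{itemize}
\item On the first piece, summing over $j$ for fixed $l$ gives $\sum_{j=-1}^{l-2}\Delta_j u=\Ss_{l-2}u$. This produces $\sum_l \Ss_{l-2}u\cdot\Delta_l v$.
\item On the second piece, summing over $l$ for fixed $j$ gives $\Ss_{j-2}v$. After renaming $j$ as $l$, this produces $\sum_l \Delta_l u\cdot \Ss_{l-2}v$.
\item On the third piece, summing over $j\in\{l-1,l,l+1\}$ for fixed $l$ gives $\wtDl u=\Delta_{l-1}u+\Delta_l u+\Delta_{l+1}u$. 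By the symmetry of the diagonal band, I can equivalently sum over $l$ for fixed $j$, which gives $\Delta_j u\cdot\wtDq v$ with $q=j$; after renaming, this is $\sum_l\Dl u\cdot\wtDl v$.
\end{itemize}

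\emph{Conventions.} I adopt $\Ss_{m}=0$ for $m<-1$. I also adopt $\Delta_{-2}=0$, so that $\widetilde{\Delta}_{-1}=\Delta_{-1}+\Delta_0$. With these conventions the three pieces exhaust the index set exactly once, and the identity holds.

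\emph{Main obstacle.} The only genuine point is the rearrangement of the double series. For trigonometric polynomials it is trivial. In general, I use that each partial sum $\Ss_N u\cdot \Ss_N v$ splits exactly into the three truncated sums. I then pass to the limit $N\to\infty$ using the distributional convergence of $\Ss_N u\to u$ together with continuity of the product in the chosen Sobolev pair.
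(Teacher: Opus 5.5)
Your proof is correct, and it is the standard textbook argument. You expand the product into $\sum_{j,l}\Delta_j u\,\Delta_l v$ and partition the index pairs into $j-l\le -2$, $j-l\ge 2$ and $|j-l|\le 1$, with the conventions $\Ss_m=0$ for $m<-1$ and $\Delta_{-2}=0$. The paper gives no proof of its own. It simply recalls the formula as standard Littlewood--Paley material from Bahouri--Chemin--Danchin, and only applies it to the Galerkin approximations $B_n$. These are trigonometric polynomials, so your finite-sum case already covers every use in the paper. Restricting to pairs $(u,v)$ for which the product is actually defined is also appropriate, since for general tempered distributions the left-hand side has no meaning.

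One small correction to your limiting step. The product $\Ss_N u\cdot \Ss_N v$ does \emph{not} split exactly into the three sums truncated at $l\le N$. The truncated remainder $\sum_{l=-1}^{N}\Dl u\,\wtDl v$ contains the term $\Delta_N u\,\Delta_{N+1}v$, which is absent from $\Ss_N u\cdot\Ss_N v$. This boundary term is harmless: $\|\Delta_N u\,\Delta_{N+1}v\|_{L^1}\les 2^{-N(s_1+s_2)}\|u\|_{H^{s_1}}\|v\|_{H^{s_2}}\to 0$ when $s_1+s_2>0$. So the conclusion is unaffected, but the word ``exactly'' should be dropped, or the boundary term handled explicitly.
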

\noindent It then follows that, for example,
\begin{equation*}
\Delta_q(u\cdot\nabla v)
=\sum_{|q-l|\leq 2}\Delta_q\big(\Ss_{l-2} u\cdot\nabla \Delta_l v\big)
+\sum_{|q-l|\leq 2}\Delta_q\big(\Dl u\cdot\nabla \Ss_{l-2} v\big)
+\sum_{l\geq q-2} \Delta_q\big(\Dl u \cdot\nabla \wtDl v\big).
\end{equation*}

\subsection{Probability space} 
Let $\Ss = (\Om, \Ff, \F, \P)$ be a stochastic basis with filtration 
$\F = \{\Ff_t\}_{t \ge 0}$. Let $\mathscr{U}$ be a separable Hilbert space, and let 
$\W$ be an $\F$-adapted cylindrical Wiener process on $\mathscr{U}$ defined on $\Ss$. 
Denote by $\{e_k\}_{k \in \N}$ an orthonormal basis of $\mathscr{U}$. Then $\W$ admits 
the representation
\[
\W = \sum_{k \in \N} e_k W^k,
\]
where $\{W^k\}_{k \in \N}$ are independent standard Brownian motions on $\Ss$. Let $T: \mathscr{U} \to H$ be the linear operator defined by 
\[
T e_k = c_k, \ \ \ k \geq 1.
\]
Then the noise term in \eqref{eq:EMHD_Stratonovich} is obtained by 
\[
\sumk \left( c_k \cdot \grad B \right) \circ dW^k = \left( T \cdot \grad B\right) \circ d\W.
\]
We further require the noise coefficient vectors $\{ c_k\}_{k \geq 1}$ to satisfy 
\begin{equation}\label{eq:noise_assumption}
\{ c_k\}_{k \geq 1} \in \ell^2 (\N, H^{s+1}).
\end{equation}
\subsection{Pathwise and Martingale solutions}
In this section, we introduce the definitions of pathwise and martingale solutions. Although a pathwise solution is stronger than a martingale solution in the sense of stochastic PDE theory, both notions possess the same level of physical regularity as classical solutions.

\begin{Definition}[Martingale solution]\label{def_Martingale_solution} For $T > 0$, let $B_0 \in L^2(\Om; H^s)$ be an $\Ff_0$-measurable $H^s$-valued random variable. We call a quadruple $\bigl(\tB_0, \tB, \tW, \tSs \bigr)$ a martingale solution to the system \eqref{eq:EMHD_cutoff_Ito_1}-\eqref{eq:EMHD_cutoff_Ito_3} on $[0,T]$ if:
\begin{enumerate}
\item $\widetilde{\Ss} = \bigl(\tOm, \tFf, \tF, \tP \bigr)$ is a stochastic basis such that $\tW$ is an $\tF$-adapted cylindrical Brownian motion with components $\{\widetilde{W}^k\}_{k \ge 1}$, \\
\item $\tB_0 \in L^2(\tOm,H^s)$ has the same law as $B_0$, \\
\item $\tB$ is a progressively measurable process such that 
\[
\tB \in L^2\bigl(\tOm; C([0,T];H^s)\bigr), \ \ \ \tP\text{-a.s.},
\]
and for every $t\in [0,T]$ the following identity holds: 
\begin{align*}
&\Ltwoinner{\tB(t),\phi} + \int_0^t \Ltwoinner{\chi^2_r \curl \bigl( (\curl \tB ) \times \tB \bigr) + \lb^\a \tB, \phi} ds \\
=\ &\Ltwoinner{\tB(0),\phi} + \haf \sumk \int_0^t \Ltwoinner{\Tt^2_k \tB,\phi} ds + \sumk \int_0^t \Ltwoinner{\Tt_k \tB, \phi} d\widetilde{W}^k_s ,
\end{align*}
for all $\phi \in H$. 
\end{enumerate}
\end{Definition}

\begin{Definition}[Pathwise solution] Given $\Ff_0$-measurable initial data $B_0 \in L^2(\Om;H^s)$, we say that: \\
\begin{enumerate}
\item A pair $\left(B, \eta \right)$ is a local pathwise solution to the system \eqref{eq:EMHD_Ito} if $\eta$ is a strictly positive $\F$-stopping time and $B(\cdot \wedge \eta)$ is a progressively measurable stochastic process such that, $\P$-almost surely,
\[
B(\cdot \wedge \eta) \in L^2(\Om; C([0,\infty); H^s) \cap L^2([0,\infty);H^{s+\frac{\a}{2}})).
\]
Additionally, for every $t \geq 0$, the following identity holds:
\begin{align*}
&\Ltwoinner{B(t\wedge \eta),\phi} + \int_0^{t\wedge \eta} \Ltwoinner{\curl \bigl( (\curl B ) \times B \bigr) + \lb^\a B, \phi} ds \\
=\ &\Ltwoinner{B(0),\phi} + \haf \sumk \int_0^{t\wedge \eta} \Ltwoinner{\Tt^2_k \tB,\phi} ds + \sumk \int_0^{t\wedge \eta}  \Ltwoinner{\Tt_k \tB, \phi} d\widetilde{W}^k_s ,
\end{align*}
for all $\phi \in H$.\\
\item A triplet $(B,(\eta_k)_{k \geq 1},\xi)$ is a maximal pathwise solution if for each pair $(B,\eta_k)$ it holds that: \\
\begin{enumerate}[label=(\roman*)]
\item $(B,\eta_k)$ is a local pathwise solution. \\
\item $\eta_k$
 is an increasing sequence of stopping times such that $\lim_{k \to \infty} \eta_k = \xi$. \\
\item $\sup_{t \in [0,\eta_k]} \Hsnorm{B(t)} \geq k $ on the set $\{\xi < \infty \}$.
 \end{enumerate}
\end{enumerate}

\end{Definition}

\section{Existence of martingale solutions}\label{Sec:existence}
In this section, we prove the existence of martingale solutions by establishing uniform a priori estimates for the Galerkin approximations, followed by a compactness argument and passage to the limit.
\subsection{Energy estimates}
The Galerkin truncation of the system is given by 
\begin{subequations}\label{eq:EMHD_Ito_Galerkin}
\begin{align}
dB_n + \bigl( \chi^2_r \Pp_n \curl ( ( \curl B_n )\times B_n ) +\mu \lb^\a B_n \bigr) dt &= \haf \sum_{k =1}^\infty \Pp_n \Tt^2_k B_n dt + \sum_{k=1}^\infty \Pp_n \Tt_k B_n dW^k, \label{eq:EMHD_Ito_Galerkin_1} \\
B_n(0) &= \Pp_n B_0, \label{eq:EMHD_Ito_Galerkin_2}
\end{align}
\end{subequations}
where $\Tt_k u = c_k \cdot \nabla u$. Here $\Pp_n$ is the Leray projection defined by 
\[
\Pp_n B := \sum_{|k| \leq n} \hat{B}_k e^{2\pi i k \cdot x}.
\]

\begin{Proposition}[Uniform energy estimate]\label{prop:uni_energy_estimate}
Given $p \geq 2$, $T \geq 0$, $\a \in (1,2]$, $s > \frac{5}{2}$ and $B_0 \in L^\infty(\Omega,H^s)$ be $\Ff_0$-measurable, suppose that $B_n$ is the solution to the Galerkin approximated equations \eqref{eq:EMHD_Ito_Galerkin}, then there exists a universal constant $C := C(p,s,r,T)$ independent of $n$ such that
\begin{enumerate}[label=(\alph*)]
\item \label{prop:uni_energy_estimate_1} We have the uniform energy estimate in $n \in \N$:
\begin{equation*}
    \E \Bigl( \sup_{t \in [0,T]} \HsNorm{B_n(t)}^p +\int_0^T \Hsnorm{B_n(t)}^{p-2} \HsaNorm{B_n(t)}^2 dt \Bigr) \leq C \bigl( 1+ \E \Hsnorm{B_0}^p \bigr). 
\end{equation*}
\item \label{prop:uni_energy_estimate_2} For any $\gm \in \left[0,\haf \right)$ and any $\b \in (1,2]$, there is 
\[
\E \Big \lVert\int_0^\cdot \sumk \Pp_n \Tt_k B_n dW^k_t \Big \rVert^p_{W^{\gm,p}(0,T;H^{s-2+\frac{\b}{2}})} \leq C \bigl( 1+ \E \Hsnorm{B_0}^p \bigr ).
\]
\item \label{prop:uni_energy_estimate_3} It holds that 
\[
\E \Big \lVert B_n - \int_0^\cdot \sumk \Pp_n \Tt_k B_n dW^k_t \Big \rVert^2_{W^{1,2}(0,T;H^{s-2+\frac{\a}{2}})} \leq C \bigl(1+ \E \Hsnorm{B_0}^4\bigr).
\]
\item  \label{prop:uni_energy_estimate_4} In addition, we have that for $\gm \in \left[0,\haf \right)$:
\[
\E \left \lVert B_n\right \rVert^2_{W^{\gm,2}(0,T;H^{s-2+\frac{\a}{2}})} \leq C \bigl(1+ \E \Hsnorm{B_0}^4\bigr).
\]
\end{enumerate}
\end{Proposition}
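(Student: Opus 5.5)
We argue by Itô's formula for $\|\Lambda^s B_n\|_{L^2}^2$ (equivalently for $\sum_q \lambda_q^{2s}\|\Delta_q B_n\|_{L^2}^2$), applied to the finite-dimensional SDE \eqref{eq:EMHD_Ito_Galerkin}, and then raise it to the power $p/2$.

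\emph{Structure of the Itô identity.} This gives
\begin{align*}
d\|\Lambda^s B_n\|_{L^2}^2 + 2\mu\|\Lambda^{s+\frac{\a}{2}}B_n\|_{L^2}^2\,dt
= &-2\chi_r^2\langle \Lambda^s \curl((\curl B_n)\times B_n),\Lambda^s B_n\rangle\,dt\\
&+\sumk\Bigl(\langle \Lambda^s\Tt_k^2B_n,\Lambda^sB_n\rangle+\|\Lambda^s\Pp_n\Tt_kB_n\|_{L^2}^2\Bigr)dt\\
&+2\sumk\langle\Lambda^s\Tt_kB_n,\Lambda^sB_n\rangle\,dW^k .
\end{align*}
We may drop $\Pp_n$ inside the inner products, since $\Pp_n$ is self-adjoint and commutes with $\Lambda^s$. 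The projection in the quadratic variation term satisfies $\|\Pp_n f\|\le\|f\|$, which suffices.

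\emph{Noise terms.} The key is the standard cancellation for transport noise:
\[
\langle\Lambda^s\Tt_k^2B,\Lambda^sB\rangle+\|\Lambda^s\Tt_kB\|^2
=\langle[\Lambda^s,\Tt_k]\Tt_kB,\Lambda^sB\rangle+\|[\Lambda^s,\Tt_k]B\|^2+\langle[\Lambda^s,\Tt_k]B,\Tt_k\Lambda^sB\rangle\cdots .
\]
This uses $\divv c_k=0$, hence $\langle\Tt_k f,f\rangle=0$ and $\Tt_k^*=-\Tt_k$. After regrouping, the right-hand side equals $\langle[[\Lambda^s,\Tt_k],\Tt_k]B,\Lambda^sB\rangle+\|[\Lambda^s,\Tt_k]B\|^2$. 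Kato–Ponce commutator estimates (appendix), with $c_k\in H^{s+1}$ and $s>5/2$, bound this by $C\|c_k\|_{H^{s+1}}^2\|B\|_{H^s}^2$. Summing with \eqref{eq:noise_assumption} gives $C\|B_n\|_{H^s}^2$. The martingale term is handled the same way. Since $\langle\Lambda^s\Tt_kB,\Lambda^sB\rangle=\langle[\Lambda^s,\Tt_k]B,\Lambda^sB\rangle$, its quadratic variation is bounded by $C\|B_n\|_{H^s}^4$. Burkholder–Davis–Gundy and Young then absorb it into $\tfrac12\E\sup\|B_n\|_{H^s}^p$.

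\emph{Hall term (main obstacle).} Write $\curl((\curl B)\times B)=B\cdot\nabla(\curl B)-(\curl B)\cdot\nabla B$, so the leading term is $\curl$ applied to the transport-type part. Pairing against $\Lambda^sB$, the top-order piece $\langle B\cdot\nabla\Lambda^s J,\Lambda^s B\rangle$ (with $J=\curl B$) does not vanish by symmetry. A derivative-level cancellation exists only in the form $\langle \curl(\Lambda^s J\times B),\Lambda^s B\rangle=\langle \Lambda^s J\times B,\Lambda^s J\rangle=0$.

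We therefore use Bony's decomposition (Lemma \ref{lemma:Bony}) on $\Delta_q$ of $J\times B$. The paraproduct $\Ss_{l-2}B\times\Delta_l J$ is treated via the commutator $[\Delta_q,\Ss_{l-2}B\times]$ plus the vanishing term above. The remaining pieces $\Delta_lJ\times\Ss_{l-2}B$ and the high–high interactions cost at most $\|\nabla B\|_{L^\infty}$ times $\|B\|_{H^{s+1}}\|B\|_{H^s}$, possibly after an extra derivative. The resulting bound is
\[
\bigl|\langle\Lambda^s\curl(J\times B),\Lambda^sB\rangle\bigr|\lesssim \|B\|_{W^{1,\infty}}\|B\|_{H^{s+1-\frac{\a}{2}}}\|B\|_{H^{s+\frac{\a}{2}}}+\ldots
\]
Since $\a>1$, we have $s+1-\frac{\a}{2}<s+\frac{\a}{2}$, so interpolation and Young give $\epsilon\|B\|_{H^{s+\frac{\a}{2}}}^2+C(r)\|B\|_{H^s}^2$. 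The cutoff $\chi_r^2$ keeps $\|B_n\|_{W^{1,\infty}}\le r$ wherever the term is active. This step, the exact handling of the one-derivative loss through the commutator at frequency $q$, is where I expect the difficulty, and it is where $\a>1$ is genuinely needed.

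Combining the estimates and applying Itô to the power $p/2$, followed by BDG and Grönwall, yields \ref{prop:uni_energy_estimate_1}.

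\emph{Time regularity.} For \ref{prop:uni_energy_estimate_2} we use the fractional BDG-type lemma (Flandoli–Gatarek): $\E\|\int_0^\cdot G\,dW\|^p_{W^{\gm,p}(0,T;X)}\lesssim\E\int_0^T\|G\|^p_{L_2(\mathscr U,X)}dt$ for $\gm<1/2$. Here $\|\Tt_kB_n\|_{H^{s-2+\b/2}}\le\|c_k\|_{H^{s}}\|B_n\|_{H^s}$ because $s-1+\b/2\le s$, and part \ref{prop:uni_energy_estimate_1} bounds the right-hand side.

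For \ref{prop:uni_energy_estimate_3}, the difference equals $\Pp_nB_0$ minus the integral of the drift. It therefore suffices to bound the drift in $L^2(\Om;L^2(0,T;H^{s-2+\a/2}))$:
\begin{itemize}
\item $\Lambda^\a B_n\in H^{s-\a/2}\subset H^{s-2+\a/2}$, since $\a\le2$;
\item $\Tt_k^2B_n\in H^{s-2}$, controlled by $\|B_n\|_{H^{s+\a/2}}$;
\item the Hall term satisfies $\|\curl(J\times B)\|_{H^{s-2+\a/2}}\lesssim\|B\|_{H^s}\|B\|_{H^{s+\a/2}}$, by the algebra property since $s-2+\a/2+2\le s+\a/2$ and $s>5/2$.
\end{itemize}
Squaring and taking expectations, the mixed Hall term is bounded by $\E\sup\|B\|_{H^s}^2\int\|B\|^2_{H^{s+\a/2}}$. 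This is controlled by part \ref{prop:uni_energy_estimate_1} with $p=4$ through the weighted integral, which explains the $\E\|B_0\|^4$. Finally, \ref{prop:uni_energy_estimate_4} follows by adding \ref{prop:uni_energy_estimate_2} and \ref{prop:uni_energy_estimate_3}, using $W^{1,2}\subset W^{\gm,2}$ and $W^{\gm,p}\subset W^{\gm,2}$ on bounded intervals.
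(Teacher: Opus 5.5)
Your proposal is correct and follows essentially the paper's proof. The ingredients match: It\^o's formula for the $H^s$ energy; the double-commutator rewriting $\langle[[\Lambda^s,\Tt_k],\Tt_k]B,\Lambda^sB\rangle+\|[\Lambda^s,\Tt_k]B\|^2$ of the noise terms; BDG for the martingale term; a Littlewood--Paley/Bony treatment of the Hall term built on the cancellation $\langle (\Lambda^s J)\times B,\Lambda^s J\rangle=0$, giving an $r\|B\|_{H^{s+1/2}}^2$-type bound absorbed via $\a>1$; and fractional BDG plus drift bounds for (b)--(d). Applying Bony to $J\times B$ directly, rather than first using $A\times(\curl B)=(\nabla B)\cdot A-(A\cdot\nabla)B$ as the paper does, is only a bookkeeping difference.

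Two small corrections:
\begin{enumerate}
\item In (c), keep the integrand pointwise in time, $\int_0^T\|B_n\|_{H^s}^2\|B_n\|_{H^{s+\a/2}}^2\,dt$. This is exactly the $p=4$ quantity in (a). Bounding it first by $\sup_t\|B_n\|_{H^s}^2\int_0^T\|B_n\|_{H^{s+\a/2}}^2dt$ gives something (a) does not directly control.
\item In (d), take $p=2$ in (b) instead of invoking $W^{\gm,p}\subset W^{\gm,2}$. That embedding fails at the same $\gm$ when $p>2$.
\end{enumerate}
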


\begin{Remark}
It is possible to extend the similar results towards the Sobolev space with lower regularity $s \in \left( \frac{3}{2}, \frac{5}{2} \right]$ as well as the optimal one, which requires a finer analysis; see \cite{WZ17,Dai21}. 
\end{Remark}

\begin{proof}

By Lemma \ref{lemma:Ito_formula_EMHD}, we have 
\begin{align*}
d \twonorm{\lb^s B_n}^p &= -p \left< \chir^2 \Pp_n \bigl( \curl ( (\curl B_n) \times B_n ) + \mu \lb^\a B_n \bigr), \lb^{2s}B_n \right> \twonorm{\lb^s B_n}^{p-2} dt \notag \\
&+ \frac{p}{2} \sum_{k =1}^\infty \bigl ( \<\Pp_n \Tt^2_k B_n ,\lb^{2s} B_n\> + \twonorm{\lb^s \Pp_n \Tt_k B_n}^2 \bigr) \twonorm{\lb^s B_n}^{p-2} dt \notag \\
&+\haf p (p-2) \twonorm{\lb^s B_n}^{p-4} \sum_{k=1}^\infty \< \Pp_n \Tt_k B_n, \lb^{2s} B_n  \>^2 dt \notag \\
&+ p \twonorm{\lb^s B_n}^{p-2} \sum_{k =1 }^\infty \< \Pp_n \Tt_k B_n, \lb^{2s} B_n \> dW^k  \notag \\
&=: I_1 dt +I_2 dt +I_3 dt +I_4 d\W .
\end{align*}
Firstly, we consider 
\[
\int_0^t I_2 d\tau = \frac{p}{2} \sumk \int_0^t \bigl ( \<\Pp_n \Tt^2_k B_n ,\lb^{2s} B_n\> + \twonorm{\lb^s \Pp_n \Tt_k B_n}^2 \bigr) \twonorm{\lb^s B_n}^{p-2} d\tau.
\]
Using the fact that $c_k$ is divergence-free for all $k$, the integrand can be written by
\begin{align*}
&\left <\Pp_n \Tt^2_k B_n , \lb^{2s} B_n \right> + \twonorm{\lb^s \Pp_n \Tt_k B_n}^2 = \Ltwoinner{\Tt_k \Tt_k B_n,\lb^{2s}B_n} + \Ltwoinner{\lb^s\Tt_k B_n, \lb^s \Tt_k B_n} \\
\leq&\Ltwoinner{\lb^s \Tt_k \Tt_k B_n,\lb^s B_n} + \Ltwoinner{\lb^s \Tt_k B_n, \lb^s \Tt_k B_n} \\
=&\Ltwoinner{[\lb^s,\Tt_k]\Tt_k B_n,\lb^s B_n} + \Ltwoinner{\Tt_k \lb^s \Tt_k B_n, \lb^s B_n} + \Ltwoinner{\lb^s \Tt_k B_n, \lb^s \Tt_k B_n} \\
=&\Ltwoinner{\left[ [\lb^s,\Tt_k], \Tt_k \right]B_n,\lb^s B_n} + \Ltwoinner{T_k[\lb^s,\Tt_k]B_n,\lb^s B_n}+ \Ltwoinner{\Tt_k \lb^s \Tt_k B_n, \lb^s B_n} + \Ltwoinner{\lb^s \Tt_k B_n, \lb^s \Tt_k B_n} \\
= &\Ltwoinner{\left[ [\lb^s,\Tt_k],\Tt_k \right]B_n, \lb^s B_n}- \Ltwoinner{[\lb^s,\Tt_k]B_n,\Tt_k \lb^s B_n}- \Ltwoinner{\lb^s \Tt_k B_n,\Tt_k \lb^s B_n} + \Ltwoinner{\lb^s\Tt_k B_n,\lb^s \Tt_k B_n} \\
= & \Ltwoinner{\left[[\lb^s, \Tt_k],\Tt_k \right]B_n, \lb^s B_n} - \Ltwoinner{[\lb^s,\Tt_k]B_n,\Tt_k \lb^s B_n} + \Ltwoinner{\lb^s \Tt_k B_n,[\lb^s,\Tt_k]B_n} \\
= &\Ltwoinner{\left[\lb^s,\Tt_k \right]B_n,\lb^s B_n} + \Ltwoinner{[\lb^s,\Tt_k]B_n,[\lb^s,\Tt_k]B_n},
\end{align*}
where we dropped the Leray projection $\Pp_n$ since $\twonorm{\Pp_n B_n} \leq \twonorm{B_n}$. Otherwise, we can replace $B_n$ by $\Pp_n B_n$ in the above and obtain the same estimate. Now, by Lemmas \ref{lemma:commutator_est_1} and \ref{lemma:commutator_est_2}, we have that 
\begin{align*}
\Ltwoinner{\left[\lb^s,\Tt_k \right]B_n,\lb^s B_n} &\lesssim \HssNorm{c_k} \Hsnorm{B_n}^2, \\
\Ltwoinner{[\lb^s,\Tt_k]B_n,[\lb^s,\Tt_k]B_n} &\lesssim \HsNorm{c_k}^2 \Hsnorm{B}^2,
\end{align*}
whence,
\begin{equation}\label{proof:uniform_energy_estimate_1}
\left <\Pp_n \Tt^2_k B_n , \lb^{2s} B_n \right> + \twonorm{\lb^s \Pp_n \Tt_k B_n}^2 \les \ltwoHssnorm{c}^2 \Hsnorm{B_n}^2,
\end{equation}
and therefore 
\begin{equation}\label{proof:uniform_energy_estimate_2}
\int_0^t I_2 d\tau \les \ltwoHssnorm{c}^2 \int_0^t \Hsnorm{B_n}^p d\tau.
\end{equation}
Next we estimate $I_3$. Recall that 
\begin{align*}
I_3 &= \haf p(p-2) \int_0^t \twonorm{\lb^s B_n}^{p-4} \sumk \Ltwoinner{\Pp_n \Tt_k B_n,\lb^{2s}B_n}^2 d\tau. 
\end{align*}
To bound this term, we first note that
\begin{align*}
\Ltwoinner{\Pp_n \Tt_k B_n, \lb^{2s}B_n} &= \Ltwoinner{\Tt_k B_n,\lb^{2s}B_n} = \Ltwoinner{\lb^s \Tt_k B_n,\lb^s B_n} \\
&= \Ltwoinner{\lb^s \Tt_k B_n, \lb^s B_n} -\Ltwoinner{\Tt_k \lb^s B_n,\lb^s B_n}.
\end{align*}
Since $c_k$ is divergence free for each $k$, we have
\[
\Ltwoinner{\Tt_k \lb^s B_n,\lb^s B_n} = \int_{\T^3} c_k \cdot \nabla (\lb^s B_n) \cdot \lb^s B_n dx = \int_{\T^3} \nabla \cdot c_k |\lb^s B_n|^2 \equiv 0.
\]
Therefore,
\begin{align} \label{proof:uniform_energy_estimate_3}
\Ltwoinner{\Pp_n \Tt_k B_n, \lb^{2s}B_n} = \Ltwoinner{[\lb^s,\Tt_k]B_n, \lb^s B_n} \leq \HssNorm{c_k} \HsNorm{B_n}^2.
\end{align}
Hence we arrive at,
\begin{align}
\int_0^t I_3 d\tau &= \haf p (p-2) \int_0^t \twonorm{\lb^s B_n}^{p-4} \sumk \Ltwoinner{\Pp_n \Tt_k B_n,\lb^{2s}B_n}^2 d \tau \notag \\
&\leq \haf p(p-2) \int_0^t \HsNorm{B_n}^{p-4} \biggl ( \sumk \HssNorm{c_k}^2 \biggr) \Hsnorm{B_n}^4 d \tau \notag \\
&\les_p \ltwoHssnorm{c}^2 \int_0^t \HsNorm{B_n}^p d \tau. \label{proof:uniform_energy_estimate_4}
\end{align}
For the estimate of $I_4$, applying Burkholder-Davis-Gundy Inequality (Theorem \ref{Thm:Burkholder-Davis-Gundy Inequality}) and \eqref{proof:uniform_energy_estimate_3} yields
\begin{align}
\E \biggl( \sup_{\tau \in [0,t]} \Bigl| \int_0^\tau I_4 d\W  \Bigr| \biggr) &= p \E \sup_{\tau \in [0,t]} \biggl( \Bigl|\int_0^\tau \twonorm{\lb^s B_n}^{p-2} \sumk \Ltwoinner{\Pp_n \Tt_k B_n , \lb^{2s} B_n} dW^k \Bigr| \biggr) \notag \\
&\leq C_p \E \biggl( \int_0^t \Hsnorm{B_n}^{2(p-2)} \sumk \Ltwoinner{\Pp_n \Tt_k B_n,\lb^{2s}B_n}^2 d\tau  \biggr)^\haf \notag \\
&\leq C_p \E \biggl( \int_0^t \Hsnorm{B_n}^{2(p-2)} \biggl( \sumk \HssNorm{c_k}^2 \biggr) \Hsnorm{B_n}^4 d\tau  \biggr)^\haf \notag \\
&\leq C_p \ltwoHssnorm{c} \E \biggl (\int_0^t \Hsnorm{B_n}^{2p}d\tau \biggr)^\haf \notag \\
&\leq \frac{1}{2} \E \Bigl (\sup_{\tau \in [0,t]} \Hsnorm{B_n}^p\Bigr) + C_p \ltwoHssnorm{c}^2 \E \biggl(\int_0^t \Hsnorm{B_n}^p d\tau \biggr). \label{proof:uniform_energy_estimate_5}
\end{align}
To estimate $I_1$, we use integration by parts to obtain
\begin{align*}
\int_0^t I_1 d\tau &= -p \int_0^t \Ltwoinner{\chi^2_r \Pp_n \left( \curl (\curl B_n)\times B_n\right) + \mu \lb^\a B_n, \lb^{2s}B_n} \twonorm{\lb^s B_n}^{p-2} d\tau \\
&= p \int_0^t \Ltwoinner{\chi^2_r \Pp_n \lb^s \left( B_n \times \curl B_n \right), \lb^{s}(\curl B_n) }  \Hsnorm{B_n}^{p-2} d\tau -\mu p\int_0^t \HsaNorm{B_n}^2 \Hsnorm{B_n}^{p-2} d\tau.
\end{align*}
By Lemma \ref{lemma:est_hall_term} given below, we have that 
\begin{align}\label{proof:uniform_energy_estimate_6}
\int_0^t I_1 d\tau \leq C_r p \int_0^t \Hsnorm{B_n}^p d\tau - \frac{\mu p}{2} \int_0^t \Hsnorm{B_n}^{p-2} \HsaNorm{B_n}^2 d\tau.
\end{align}


\noindent Collecting \eqref{proof:uniform_energy_estimate_2}, \eqref{proof:uniform_energy_estimate_4}, \eqref{proof:uniform_energy_estimate_5} and \eqref{proof:uniform_energy_estimate_6} gives
\begin{align*}
&\E \sup_{\tau \in [0,t]} \Hsnorm{B_n}^p + \E \int_0^t \HsaNorm{B_n}^2 \Hsnorm{B_n}^{p-2} d \tau \\
\les_{s,p,r} &\ltwoHssnorm{c}^2 \Bigl( \E \Hsnorm{B_0}^p + \E \int_0^t \Hsnorm{B_n}^p d\tau \Bigr).
\end{align*}
Applying Gr\"onwall's inequality yields
\[
\E \Bigl( \sup_{\tau \in [0,T]} \Hsnorm{B_n}^p + \int_0^T \HsaNorm{B_n}^2 \Hsnorm{B_n}^{p-2} d\tau  \Bigr ) \les_{s,p,r,c,T} 1 +\E \Hsnorm{B_0}^p,
\]
which establishes \ref{prop:uni_energy_estimate_1}. 

\noindent Now we will show \ref{prop:uni_energy_estimate_2}. Using Theorem \ref{Thm:Burkholder-Davis-Gundy Inequality} with $\gm \in \left[0,\haf \right)$ and $p \geq 2$ gives 
\begin{align*}
    \E \Big \lVert \int_0^\cdot \sumk \Pp_n \Tt_k B_n dW^k_t \Big \rVert^p_{W^{\gm,p}(0,T;H^{s-2+\frac{\b}{2}})} &\leq C_p \E \int_0^T \biggl( \sumk \|\Tt_k B_n \|^2_{H^{s-1}(\T^3)} \biggr)^{\frac{p}{2}} dt\\
    &\leq C_p T \|c\|^p_{\ell(\N;H^{s-1}(\T^3))} \E \sup_{t \in [0,T]} \Hsnorm{B_n(t)}^p \\
    &\leq C \bigl( 1 + \E \Hsnorm{B_0}^p \bigr),
\end{align*}
in which we used $s - 1 > \frac{3}{2}$, $\b \leq 2$ and \ref{prop:uni_energy_estimate_1}. Therefore we have shown \ref{prop:uni_energy_estimate_2}. 
\vspace{1em} \\
For the proof of \ref{prop:uni_energy_estimate_3}, firstly, it follows from \eqref{eq:EMHD_Ito_Galerkin} that
\begin{align*}
    &B_n(t) - \int_0^t \Pp_n \Tt_k B_n dW^k_\tau \\
    = \  &\Pp_n B_0 - \int_0^t \bigl( \chi_r^2 \Pp_n \curl (\curl B_n ) \times B_n + \mu \lb^\a B_n \bigr) d\tau + \haf \int_0^t \sumk \Pp_n \Tt^2_k B_n d \tau. 
\end{align*}
Again, since $s-1> \frac{3}{2}$ and $1< \a \leq 2$, we have 
\begin{align*}
 &\| \chi^2_r \Pp_n \curl \bigl( (\curl B_n)\times B_n \bigr) + \mu \lb^\a B_n  \|^2_{H^{s-2+\frac{\a}{2}}(\T^3)} \\
 \les &\ \|\grad B_n\|^2_{H^{s-1+\frac{\a}{2}}(\T^3)} \|B_n\|^2_{H^{s-1+\frac{\a}{2}}(\T^3)} + \mu \|B_n\|^2_{H^{s+\frac{\a}{2}-2+\frac{\a}{2}}(\T^3)} \les  \bigl( 1+ \HsaNorm{B_n}^2 \bigr)\Hsnorm{B_n}^2,
\end{align*}
and that 
\begin{align*}
\|\Pp_n \Tt^2_k B_n\|_{H^{s-2+\frac{\a}{2}}(\T^3)} &\les_s \|c_k\|_{W^{s-2+\frac{\a}{2},\infty}(\T^3)} \|\Tt_k B_n\|_{H^{s-1+\frac{\a}{2}}(\T^3)} \\
&\les_s \|c_k \|_{H^{s-1+\a}(\T^3)} \|c_k\|_{H^{s-\haf+\a}(\T^3)} \HsaNorm{B_n} \les \HssNorm{c_k}^2 \HsaNorm{B_n}.
\end{align*}
Therefore,
\begin{align*}
&\E \Big \lVert B_n - \int_0^\cdot \sumk \Pp_n \Tt_k B_n dW^k_t \Big \rVert^2_{W^{1,2}(0,T;H^{s-2+\frac{\a}{2}})} \\
\leq \  &C \E \Bigl ( 1 + \Hsnorm{B_0}^2 + \sup_{t \in [0,T]} \Hsnorm{B_n}^2 + \int_0^T \HssNorm{B_n}^2 \Hsnorm{B_n}^2 dt \Bigr).
\end{align*}
The bound \ref{prop:uni_energy_estimate_3} then follows from \ref{prop:uni_energy_estimate_1}
and the bound \ref{prop:uni_energy_estimate_4} follows from \ref{prop:uni_energy_estimate_2} and \ref{prop:uni_energy_estimate_3} by taking suitable values of $p$, hence we conclude the proof.
\end{proof}
\noindent We now provide the estimate for the Hall term used in the proof of Proposition~\ref{prop:uni_energy_estimate}.
\begin{Lemma}\label{lemma:est_hall_term}
The nonlinear hall term in Proposition \ref{prop:uni_energy_estimate} satisfies the following estimate: 
\begin{align}
&\int_0^t \Ltwoinner{\chi_r^2 \Pp_n \lb^s \left( B_n \times \curl B_n \right), \lb^s \left( \curl B_n \right)} \Hsnorm{B_n}^{p-2} d\tau \notag \\
\leq &\  C_r \int_0^t \Hsnorm{B_n}^p d\tau + \frac{\mu}{2} \int_0^t \Hsnorm{B_n}^{p-2} \HsaNorm{B_n}^2 d\tau.    \label{lemma:est_hall_term_1}
\end{align}
\end{Lemma}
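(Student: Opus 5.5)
It suffices to prove the pointwise-in-time bound
\[
\Ltwoinner{\chi_r^2 \Pp_n \lb^s ( B_n \times \curl B_n ), \lb^s ( \curl B_n )} \le C_r \Hsnorm{B_n}^2 + \frac{\mu}{2}\HsaNorm{B_n}^2,
\]
and then multiply by $\Hsnorm{B_n}^{p-2}$ and integrate in time. Since $B_n=\Pp_n B_n$ and $\Pp_n$ is self-adjoint and commutes with $\lb^s$ and $\curl$, we can drop $\Pp_n$. Write $J=\curl B_n$. The cancellation that makes the Hall term tractable is that $\lb^s(B_n\times J)\cdot \lb^s J$ has vanishing top-order part: $(B_n\times \lb^s J)\cdot \lb^s J = 0$ pointwise. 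This gives the commutator identity
\[
\Ltwoinner{\lb^s(B_n\times J),\lb^s J} = \Ltwoinner{[\lb^s, B_n\times]J,\lb^s J}.
\]
Hence we lose only one derivative overall, rather than two.

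Next I would estimate the commutator by the Kato--Ponce type commutator estimate (Lemma \ref{lemma:commutator_est_1} in the appendix, or a Littlewood--Paley/Bony decomposition as in Lemma \ref{lemma:Bony}):
\[
\|[\lb^s, B_n\times]J\|_{L^2}\lesssim \|\nabla B_n\|_{L^\infty}\|J\|_{H^{s-1}} + \|B_n\|_{H^s}\|J\|_{L^\infty}\lesssim \|B_n\|_{W^{1,\infty}}\Hsnorm{B_n}.
\]
Pairing this with $\|\lb^s J\|_{L^2}\lesssim \|B_n\|_{H^{s+1}}$ would still cost a full extra derivative. To avoid this, I would split derivatives symmetrically. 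Put $\lb^{1-\a/2}$ onto the commutator side, so that $\|\lb^s J\|_{H^{-(1-\a/2)}}\lesssim \HsaNorm{B_n}$, which is allowed since $1-\a/2\ge0$. The bound then requires controlling $\|[\lb^s,B_n\times]J\|_{H^{1-\a/2}}$ by $\|B_n\|_{W^{1,\infty}}\|B_n\|_{H^{s+1-\a/2}}$ plus lower order terms.

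I would do this in the Littlewood--Paley blocks, $\sum_q \lambda_q^{2s}\Ltwoinner{\Dq(B_n\times J),\Dq J}$, splitting $\Dq(B_n\times J)$ via Bony's formula into three pieces:
\begin{enumerate}[label=(\roman*)]
\item the low--high piece $\Ss_{l-2}B_n\times\Dl J$, where the cancellation against $\Dq J$ leaves a commutator $[\Dq,\Ss_{q-2}B_n\times]\Dl J$ bounded by $\lambda_q^{-1}\|\nabla B_n\|_{L^\infty}\|\Dl J\|_{L^2}$, plus a term $(\Ss_{l-2}-\Ss_{q-2})B_n$ of the same type;
\item the high--low piece $\Dl B_n\times \Ss_{l-2}J$, bounded by $\|J\|_{L^\infty}\|\Dl B_n\|_{L^2}\lambda_q\|\Dq B_n\|_{L^2}$;
\item the high--high piece, summed over $l\ge q-2$ using Bernstein (Lemma \ref{lemma:Bernstein}).
\end{enumerate}
Each piece yields $\lesssim \|B_n\|_{W^{1,\infty}}\sum_q \lambda_q^{2s+1}\|\wtDq B_n\|_{L^2}^2$, i.e.\ $\|B_n\|_{W^{1,\infty}}\|B_n\|^2_{H^{s+1/2}}$. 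Interpolation gives $\|B_n\|_{H^{s+1/2}}^2\le \Hsnorm{B_n}^{2-1/\a}\HsaNorm{B_n}^{1/\a}$, since $\a>1$ means $s+\tfrac12$ lies strictly between $s$ and $s+\tfrac{\a}{2}$. Young's inequality with exponents $2\a$ and $2\a/(2\a-1)$ then gives $\frac{\mu}{2}\HsaNorm{B_n}^2 + C\|B_n\|_{W^{1,\infty}}^{2\a/(2\a-1)}\Hsnorm{B_n}^2$. Finally the cutoff $\chi_r^2$ vanishes unless $\|B_n\|_{W^{1,\infty}}\le r$, so the prefactor is bounded by a constant $C_r$, which gives \eqref{lemma:est_hall_term_1}. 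This is also why $\a>1$ is required.

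The main obstacle is the low--high commutator term (i). I must verify that the cancellation really removes the full top-order term, not only up to a term costing $\lambda_q^{2s+2}$. The key check is that, after the commutator, the derivative count is $s+\tfrac12$ on each factor rather than $s+1$ on one. If this fails, I would use the Hall structure $\curl(J\times B)$ and the identity $\Ltwoinner{(B\cdot\nabla)\lb^s J,\lb^s J}=0$ from $\divv B=0$, mirroring the deterministic treatments in \cite{CWW15,Dai21}.
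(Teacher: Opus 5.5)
Your plan is essentially the paper's proof. Both use Littlewood--Paley localization, the pointwise cancellation of the top-order term, Bony's decomposition with a commutator bound and Bernstein, and the cutoff to reduce everything to $r\|B_n\|_{H^{s+\frac12}}^2$, followed by interpolation and Young. The only difference is cosmetic: you apply Bony directly to $B_n\times \curl B_n$, while the paper first uses \eqref{equ:vector_calculus_identity} to split into a transport commutator $J_1$ and a gradient-type term $J_2$. Your low--high term is the multiplication-commutator analogue of the paper's $J_{11}$ and $J_{22}$ and costs only $\lambda_q^{2s+1}$, so the fallback in your last paragraph is not needed.

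There is one arithmetic slip. The correct interpolation is
\[
\|B_n\|_{H^{s+\frac12}}^2 \le \Hsnorm{B_n}^{2-\frac{2}{\alpha}}\HsaNorm{B_n}^{\frac{2}{\alpha}},
\]
not the version with exponents $2-\frac1\alpha$ and $\frac1\alpha$. Testing on a single Fourier mode at frequency $N$, your version would assert $N^{2s+1}\lesssim N^{2s+\frac12}$. Young's inequality must then be applied with exponents $\alpha$ and $\frac{\alpha}{\alpha-1}$. This gives $\frac{\mu}{2}\HsaNorm{B_n}^2+Cr^{\frac{\alpha}{\alpha-1}}\Hsnorm{B_n}^2$, which is exactly the paper's constant $Cr^{1+\frac{1}{\alpha-1}}$. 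The condition $\alpha>1$ enters through the requirement $\frac{2}{\alpha}<2$.

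Sending every piece through $\|B_n\|_{H^{s+\frac12}}^2$, as you do, is the right treatment when $\alpha<2$. The paper's first summation bounds $\lambda_q^{1+2s}$ by $\lambda_q^{s}\lambda_q^{s+\frac{\alpha}{2}}$, which holds only when $\alpha=2$. That term should be handled by the same interpolation the paper uses in its second summation.
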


\begin{proof}
We follow the argument by \cite{CWW15}. First, for fixed $n \in \N$,
we write
\begin{align*}
&\int_0^t \Ltwoinner{\chi_r^2 \Pp_n \lb^s \left( B_n \times \curl B_n \right), \lb^s \left( \curl B_n \right)} \Hsnorm{B_n}^{p-2} d\tau \\
=\ &\sum_{q = -1}^\infty \int_0^t \chi_r^2 \Ltwoinner{ \Pp^2_n \Dq^2 \lb^s \left( B_n \times \curl B_n \right), \lb^s \left( \curl B_n \right)} \Hsnorm{B_n}^{p-2} d\tau \\
=\ &\sum_{q = -1}^\infty \int_0^t \chi_r^2 \Ltwoinner{  \Pp_n \Dq \lb^s \left( B_n \times \curl B_n \right), \Pp_n \Dq \lb^s \left( \curl B_n \right)} \Hsnorm{B_n}^{p-2} d\tau.
\end{align*}
Next, we use the fact that 
\[
\int_0^t \chi_r^2\Ltwoinner{\bigl(B_n \times \Pp_n \Dq \lb^s (\curl B_n) \bigr), \Pp_n \Dq \lb^s \left( \curl B_n \right)} \Hsnorm{B_n}^{p-2} d\tau  = 0,
\]
to get 
\begin{align*}
&\int_0^t \chi_r^2 \Ltwoinner{  \Pp_n \Dq \lb^s \left( B_n \times \curl B_n \right), \Pp_n \Dq \lb^s \left( \curl B_n \right)} \Hsnorm{B_n}^{p-2} d\tau \\
= \ &\int_0^t \chi_r^2\Ltwoinner{ \Pp_n \Dq \lb^s \left( B_n \times \curl B_n \right) - \bigl(B_n \times \Pp_n \Dq \lb^s (\curl B_n) \bigr), \Pp_n \Dq \lb^s \left( \curl B_n \right)} \Hsnorm{B_n}^{p-2} d\tau \\
= \ &\int_0^t \chi_r^2\Ltwoinner{ [\Pp_n \Dq \lb^s, B_n \cdot \nabla ]B_n, \Pp_n \Dq \lb^s \left( \curl B_n \right)} \Hsnorm{B_n}^{p-2} d\tau \\
+ \ &\int_0^t \chi_r^2\Ltwoinner{  \Pp_n \Dq \lb^s \haf(\grad |B_n|^2) - \grad (\Pp_n \Dq \lb^s B_n) \cdot B_n, \Pp_n \Dq \lb^s \left( \curl B_n \right)} \Hsnorm{B_n}^{p-2} d\tau \\
=: \ & J_1 + J_2,
\end{align*}
for each $q \geq -1$, where we make use of the vector calculus identity
\begin{equation}\label{equ:vector_calculus_identity}
A \times (\curl B) = (\grad B) \cdot A - (A \cdot \grad )B.
\end{equation}
Invoking Bony's paraproduct formula, we decompose
\[
J_1 =: J_{11} + J_{12} +J_{13},\quad\text{where}
\]
\begin{align*}
J_{11} &= \int_0^t \chi^2_r \biggl( \sum_{|q-l|\leq 2} \Ltwoinner{ \left[ \Pp_n \Dq \lb^s, \Ss_{l-2} B_n \cdot \grad \right] \Dl B_n, \Pp_n \lb^s \Dq (\curl B_n) } \biggr ) \Hsnorm{B_n}^{p-2} d\tau, \\
J_{12} &= \int_0^t \chi^2_r \biggl( \sum_{|q-l|\leq 2} \Ltwoinner{ \left[ \Pp_n \Dq \lb^s, \Dl B_n \cdot \grad \right] \Ss_{l-2} B_n, \Pp_n \lb^s \Dq (\curl B_n) } \biggr ) \Hsnorm{B_n}^{p-2} d\tau, \\
J_{13} &= \int_0^t \chi^2_r \biggl( \sum_{|q-l|\leq 2} \Ltwoinner{ \left[ \Pp_n \Dq \lb^s, \Dl B_n \cdot \grad \right] \wtDl B_n, \Pp_n \lb^s \Dq (\curl B_n) } \biggr ) \Hsnorm{B_n}^{p-2} d\tau.
\end{align*}
Here we adapt the notation 
\[
\left [ \Dq, \Ss_{l-2} u \cdot \nabla \right]v := \Dq \left( \Ss_{l-2}u \cdot \nabla \right)v - \Ss_{l-2}u \cdot \nabla \Dq v.
\]
We now apply Lemma~\ref{lemma:commutator_est_3} together with H\"older's inequality and Bernstein's inequality to obtain
\begin{align*}
    J_{11} &\les \int_0^t \chi_r^2 \Bigl( \l^{2s}_q \twonorm{\Dq \curl B_n}  \sum_{|q-l|\leq 2} \LinfNorm{\grad \Ss_{l-2} B_n} \twonorm{\Dl B_n} \Bigr) \Hsnorm{B_n}^{p-2} d\tau \\
    &\les \int_0^t \chi_r^2 \left( \l^{2s}_q \twonorm{\Dq \curl B_n}  \LinfNorm{\grad \Ss_{q-2} B_n} \twonorm{\Dq B_n} \right) \Hsnorm{B_n}^{p-2} d\tau\\
    &\les \int_0^t \chi_r^2 \l^{1+2s}_q \Linfnorm{\grad B_n} \twonorm{\Dq B_n}^2 \Hsnorm{B_n}^{p-2} d\tau \\
    &\les r \int_0^t \l^{1+2s}_q \twonorm{\Dq B_n}^2 \Hsnorm{B_n}^{p-2} d\tau, 
\end{align*}
and
\begin{align*}
J_{12} &\les \int_0^t \chi_r^2 \l^{2s}_q \biggl( \sum_{|q-l|\leq 2} \twonorm{ \Dl B_n} \Linfnorm{\grad \Ss_{l-2}B_n} \biggr) \twonorm{\Dq \curl B_n} \Hsnorm{B_n}^{p-2} d\tau \\
&\les r \int_0^t \chi_r^2 \l^{1+2s}_q \Linfnorm{\grad B_n} \twonorm{\Dq B_n}^2 d\tau \les r \int_0^t \l^{1+2s}_q \twonorm{\Dq B_n}^2  \Hsnorm{B_n}^{p-2} d\tau.
\end{align*}
Similarly,
\begin{align*}
J_{13} &\les \int_0^t \chi_r^2 \l^{1+2s}_q \twonorm{\Dq B_n} \biggl( \sum_{l \geq q-2} \Linfnorm{\grad \Dq \Dl B_n} \twonorm{\Dq \wtDl B_n} \biggr) \Hsnorm{B_n}^{p-2} d\tau \\
&\les \int_0^t \chi_r^2 \l^{1+2s}_q \twonorm{\Dq B_n} \Linfnorm{\grad B_n} \biggl( \sum_{l \geq q-2} \l_{q-l}  \twonorm{\Dl B_n} \biggr) \Hsnorm{B_n}^{p-2} d \tau  \\
&\les r \int_0^t \l_q^{1+2s} \twonorm{\Dq B_n} \biggl( \sum_{l \geq q-2} \l_{q-l}  \twonorm{\Dl B_n} \biggr) \Hsnorm{B_n}^{p-2} d \tau. 
\end{align*}
Note that we only use H\"older's inequality instead of the commutator estimate for $J_{12}$ above. Therefore,  
\begin{equation}\label{proof_lemma_est_hall_term_1}
J_1 \les r \int_0^t \l^{1+2s}_q \twonorm{\Dq B_n} \Bigl( \twonorm{\Dq B_n} + \sum_{l \geq q -2} \l_{q-l} \twonorm{\Dl B_n} \Bigr) \Hsnorm{B_n}^{p-2} d\tau.
\end{equation}
Now we consider $J_2$. In a similar way, we decompose $J_2$ into
\[
J_2 =: J_{21} + J_{22} + J_{23},\quad \text{where}
\]
\begin{align*}
J_{21} &= \int_0^t \chi^2_r \biggl( \sum_{|q-l|\leq 2} \Ltwoinner{ \left[ \Pp_n \Dq \lb^s, \Dl B_n \right]\grad \Ss_{l-2} B_n , \Pp_n \lb^s \Dq (\curl B_n) } \biggr ) \Hsnorm{B_n}^{p-2} d\tau, \\
J_{22} &= \int_0^t \chi^2_r \biggl( \sum_{|q-l|\leq 2} \Ltwoinner{ \left[ \Pp_n \Dq \lb^s, \Ss_{l-2} B_n\right]\grad \Dl B_n  , \Pp_n \lb^s \Dq (\curl B_n) } \biggr ) \Hsnorm{B_n}^{p-2} d\tau, \\
J_{23} &= \int_0^t \chi^2_r \biggl( \sum_{l \geq q-2} \Ltwoinner{ \Pp_n \lb^s\Dq  \grad \left( \haf \Dl B_n \cdot \wtDl B_n \right) , \Pp_n \lb^s \Dq (\curl B_n) } \biggr ) \Hsnorm{B_n}^{p-2} d\tau \\
&- \int_0^t \chi_r^2 \biggl( \sum_{l \geq q-2} \Ltwoinner{\left(\grad \Pp_n \lb^s \Dq \Dl B_n \right) \cdot \wtDl B_n, \Pp_n \lb^s \Dq (\curl B_n) } \biggr) \Hsnorm{B_n}^{p-2} d\tau.
\end{align*}
We bound $J_{21}$ by H\"older's inequality,
\begin{align*}
    J_{21} &\les \int_0^t \chi_r^2 \biggl( \sum_{|q-l|\leq 2} \l^{2s}_q \Linfnorm{\grad \Ss_{l-2} B_n} \twonorm{\Dl B_n} \twonorm{\Dq(\curl B_n)} \biggr) \Hsnorm{B_n}^{p-2} d\tau, \\
    &\les \int_0^t \chi_r^2 \l^{1+2s}_q \Linfnorm{\grad B_n} \twonorm{\Dq B_n}^2 \Hsnorm{B_n}^{p-2} d\tau, \\
    &\les r \int_0^t \l^{1+2s}_q  \twonorm{\Dq B_n}^2 \Hsnorm{B_n}^{p-2} d\tau .
\end{align*}
For $J_{22}$, we use a similar commutator estimate as in Lemma \ref{lemma:commutator_est_3} and estimate as 
\begin{align*}
    J_{22} &\les  \int_0^t \chi_r^2 \biggl( \sum_{|q-l|\leq 2} \l^{2s}_q \Linfnorm{\grad \Ss_{q-2} B_n} \twonorm{\Dq B_n} \twonorm{\Dq (\curl B_n)} \biggr) \Hsnorm{B_n}^{p-2} d\tau,  \\
    &\les \int_0^t \chi_r^2 \left(  \l_q^{1+2s} \Linfnorm{\grad B_n} \twonorm{\Dq B_n}^2 \right) \Hsnorm{B_n}^{p-2} d\tau, \\
    &\les r \int_0^t  \l_q^{1+2s} \twonorm{\Dq B_n}^2  \Hsnorm{B_n}^{p-2} d\tau. 
\end{align*}
We apply Bernstein's inequality to $J_{23}$ and conclude that 
\[
J_{23} \les r \int_0^t \l^{1+2s}_q  \twonorm{\Dq B_n} \biggl( \sum_{l \geq q-2} \l_{q-l} \twonorm{\Dl B_n} \biggr) \Hsnorm{B_n}^{p-2} d\tau,
\]
therefore, we have 
\begin{equation}\label{proof_lemma_est_hall_term_2}
J_2 \les \int_0^t \l^{1+2s}_q \twonorm{\Dq B_n} \Bigl( \twonorm{\Dq B_n} + \sum_{l \geq q-2} \l_{q-l} \twonorm{\Dl B_n}  \Bigr) \Hsnorm{B_n}^{p-2} d\tau. 
\end{equation}
Putting together \eqref{proof_lemma_est_hall_term_1} and \eqref{proof_lemma_est_hall_term_2} brings
\begin{align*}
J_1 + J_2 &\les r\int_0^t \l^{1+2s}_q \twonorm{\Dq B_n} \Bigl( \twonorm{\Dq B_n} + \sum_{l \geq q-2} \l_{q-l} \twonorm{\Dl B_n}  \Bigr) \Hsnorm{B_n}^{p-2} d\tau \\ 
&\les r\int_0^t \l^{1+2s}_q \twonorm{\Dq B_n}^2 \Hsnorm{B_n}^{p-2} d\tau  \\
&\quad + r \int_0^t \l_q^{1+2s}\biggl( \sum_{l \geq q-2} \l_{q-l}  \twonorm{\Dl B_n} \biggr)^2 \Hsnorm{B_n}^{p-2} d\tau. 
\end{align*}
Notice that if we sum over $q \geq -1$ of the right hand side of the above, then
it follows that 
\begin{align*}
    &r\sum_{q=-1}^\infty \int_0^t \l^{1+2s}_q \twonorm{\Dq B_n}^2 \Hsnorm{B_n}^{p-2} d\tau, \\
    &\quad \les r \int_0^t \sum_{q=-1}^\infty \left( \l^{2s}_q \twonorm{\Dq B_n}^2 \right)^\haf \left( \l_q^{2(s+\frac{\a}{2})}\twonorm{\Dq B_n}^2  \right)^\haf   \Hsnorm{B_n}^{p-2} d\tau, \\
    &\quad \les r \int_0^t \biggl( \sum_{q=-1}^\infty \l^{2s}_q \twonorm{\Dq B_n}^2 \biggr)^\haf  \biggl(\sum_{q=-1}^\infty \l_q^{2(s+\frac{\a}{2})}\twonorm{\Dq B_n}^2  \biggr)^\haf   \Hsnorm{B_n}^{p-2} d\tau, \\
    &\quad \les r \int_0^t \Hsnorm{B_n} \HsaNorm{B_n} \Hsnorm{B_n}^{p-2} d\tau, \\
    &\quad \leq C r^2 \int_0^t \Hsnorm{B_n}^p d\tau + \frac{\mu}{4} \int_0^t \HsaNorm{B_n}^2 \Hsnorm{B_n}^{p-2} d\tau,
    \end{align*}
and that
\begin{align*}
&r \int_0^t \l_q^{1+2s}\biggl( \sum_{l \geq q-2} \l_{q-l}  \twonorm{\Dl B_n} \biggr)^2 \Hsnorm{B_n}^{p-2} d\tau \\
&\quad \les r \int_0^t \sum_{q=-1}^\infty \biggl(\sum_{l \geq q-2} \l_{q-l} \l^{s +\haf}_q  \twonorm{\Dl B_n} \biggr)^2 \Hsnorm{B_n}^{p-2} d\tau \\
&\quad \les r \int_0^t \|B_n\|^2_{H^{s+\haf}(\T^3)}\Hsnorm{B_n}^{p-2} d\tau \les r \int_0^t \Hsnorm{B_n}^{\frac{2}{\a}(\a-1)} \HsaNorm{B_n}^{\frac{2}{\a}} \Hsnorm{B_n}^{p-2} d\tau \\
&\quad \leq Cr^{1+\frac{1}{\a-1}} \int_0^t \Hsnorm{B_n}^p d\tau + \frac{\mu}{4} \int_0^t \HsaNorm{B_n}^2 \Hsnorm{B_n}^{p-2} d\tau.  
\end{align*}
Finally, recall that 
\begin{align*}
&\int_0^t \Ltwoinner{\chi_r^2 \Pp_n \lb^s \left( B_n \times \curl B_n \right), \lb^s \left( \curl B_n \right)} \Hsnorm{B_n}^{p-2} d\tau, \\
=\ &\sum_{q = -1}^\infty \int_0^t \chi_r^2 \Ltwoinner{  \Pp_n \Dq \lb^s \left( B_n \times \curl B_n \right), \Pp_n \Dq \lb^s \left( \curl B_n \right)} \Hsnorm{B_n}^{p-2} d\tau \les \sum_{q = -1}^\infty (J_1 + J_2) \\
\leq\ &C r^{1+\frac{1}{\a-1}} \int_0^t \Hsnorm{B_n}^p d\tau + \frac{\mu}{2} \int_0^t \HsaNorm{B_n}^2 \Hsnorm{B_n}^{p-2} d\tau,
 \end{align*}
and thus we conclude the proof.
\end{proof}

\subsection{Existence} Let $\Ss = \left(\Om,\Ff,\F,\P \right)$, define the path space 
\[
U := H^s \times L^2(0,T;H^s) \cap C([0,T];H^1) \times C ([0,T];\mathscr{U}).
\]  
Given any initial data $B_0 \in L^\infty(\Om;H^s)$, let $\mu^n_0$ be the law of $\Pp_n B_0$, $\mu^n_B$ be the law of $B_n$ and $\mu_{\W}$ be the law of Wiener process on $C([0,T];\mathscr{U})$. Define 
\[
\mu^n := \mu_0^n \otimes \mu^n_B \otimes \mu_\W 
\]
be the law on $U$. Then we have the existence of the martingale solutions given by the following proposition.
\begin{Proposition}\label{prop:martingale_solution}
Given the same conditions as in Proposition \ref{prop:uni_energy_estimate}, there exists a martingale solution $\bigl(\tB_0, \tB, \widetilde{\W}, \widetilde{\Ss} \bigr)$ to the system~\eqref{eq:EMHD_cutoff_Ito_1} - \eqref{eq:EMHD_cutoff_Ito_3} on $\left[ 0, T \right]$, where $\widetilde{\Ss} = \bigl(\widetilde{\Om},\tFf, \tF,\tP \bigr)$. Moreover, $\widetilde{B}$ satisfies that 
\begin{equation}\label{prop:martingale_solution_1}
\tB \in L^2 \bigl( \widetilde{\Om}; C (\left[0,T\right];H^s)\bigr) \cap  L^2 \bigl( \widetilde{\Om}; L^2 (\left[0,T\right];H^{s+1})\bigr).
\end{equation}

\end{Proposition}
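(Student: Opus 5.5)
The proof follows the standard stochastic compactness route (Flandoli--Gatarek / Debussche--Glatt-Holtz--Temam), built on the uniform bounds of Proposition \ref{prop:uni_energy_estimate}.

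\textbf{Step 1: tightness.} Show that the laws $\mu^n$ are tight on the path space $U$; the components $\mu^n_0$ and $\mu_\W$ are trivially tight. For $\mu^n_B$, I would use the Aubin--Lions type embeddings
\[
L^2(0,T;H^{s+\frac{\a}{2}})\cap W^{\gm,2}(0,T;H^{s-2+\frac{\a}{2}}) \Subset L^2(0,T;H^s),
\]
and, for $\gm p>1$,
\[
W^{\gm,p}(0,T;H^{s-2+\frac{\a}{2}})\cap L^\infty(0,T;H^s)\Subset C([0,T];H^{s'}) \quad (s'<s,\ \text{in particular } H^1).
\]
Parts \ref{prop:uni_energy_estimate_1}, \ref{prop:uni_energy_estimate_2}, \ref{prop:uni_energy_estimate_3} and \ref{prop:uni_energy_estimate_4} bound these norms in expectation uniformly in $n$. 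Chebyshev's inequality then shows that the balls of these spaces carry mass $\geq 1-\epsilon$. For the $C([0,T];H^1)$ tightness I would split $B_n$ into the martingale part, using \ref{prop:uni_energy_estimate_2} with large $p$, and the drift part, using \ref{prop:uni_energy_estimate_3}, which gives $W^{1,2}\subset C^{1/2}$ in time.

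\textbf{Step 2: a.s. convergence.} By Prokhorov and Skorokhod, obtain a new basis $\tSs$ and random variables $(\tB_0^n,\tB_n,\tW_n)\to(\tB_0,\tB,\tW)$ a.s. in $U$, with $(\tB_0^n,\tB_n,\tW_n)$ equal in law to $(\Pp_nB_0,B_n,\W)$. The $\tW_n$ are cylindrical Wiener processes with respect to the filtration generated by these variables, and the Galerkin equation transfers to the new space because the law is preserved; a standard argument via the martingale characterization suffices. Moreover, the uniform bounds carry over to $\tB_n$, and by lower semicontinuity (Fatou) also to $\tB$. This gives $\tB\in L^2(\tOm;L^\infty(0,T;H^s)\cap L^2(0,T;H^{s+\frac{\a}{2}}))$, and the $H^{s+1}$ statement in \eqref{prop:martingale_solution_1} should follow likewise when $\a=2$. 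In general I would read the second space as $H^{s+\frac{\a}{2}}$.

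\textbf{Step 3: passage to the limit.} Pass to the limit in the weak formulation tested against fixed $\phi\in H$, treating each term separately.
\begin{itemize}
\item The linear terms $\lb^\a$ and $\Tt_k^2$ converge by the strong convergence in $L^2(0,T;H^s)$.
\item For the stochastic integral, I would use the lemma on convergence of stochastic integrals (Debussche--Glatt-Holtz--Temam): $\Tt_k\tB_n\to\Tt_k\tB$ in $L^2(0,T;L^2(\N,H^{s-1}))$ in probability, together with $\tW_n\to\tW$ in $C([0,T];\mathscr{U})$.
\item For the Hall term, note that $\chi_r(\|\tB_n\|_{W^{1,\infty}})\to\chi_r(\|\tB\|_{W^{1,\infty}})$ a.e. in $t$. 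This holds because $H^s\subset W^{1,\infty}$ for $s>\frac52$ and $\tB_n\to\tB$ in $L^2(0,T;H^s)$, using continuity of $\chi_r$. The bilinear term converges in $L^1(0,T;H^{-1})$ by strong convergence, and $\Pp_n\phi\to\phi$.
\end{itemize}
Uniform integrability in $\tOm$, obtained from the $p>2$ moment bounds and Vitali, upgrades the pathwise convergence to convergence of expectations, which identifies the limit equation.

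\textbf{Step 4: continuity in $H^s$ (main obstacle).} The compactness argument only yields weak continuity in $H^s$. Upgrading to $\tB\in C([0,T];H^s)$ would proceed as follows. I would mollify, apply the It\^o formula to $\|\Dq\tB\|^2$ or $\|\Ss_N\tB\|_{H^s}^2$, and reuse the commutator cancellations from Lemma \ref{lemma:est_hall_term} and the $I_2$--$I_4$ estimates. These show that $\|\tB(t)\|_{H^s}$ is continuous, which combined with weak continuity gives strong continuity. The delicate point is controlling the Hall term uniformly in $N$, since it loses one derivative. Here the dissipation $L^2H^{s+\frac{\a}{2}}$ with $\a>1$ absorbs the half derivative exactly as in Lemma \ref{lemma:est_hall_term}, and the noise commutators are handled by Lemmas \ref{lemma:commutator_est_1} and \ref{lemma:commutator_est_2}.
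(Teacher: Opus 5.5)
Your proposal is correct and takes essentially the same route as the paper. Both arguments use the same steps:
\begin{itemize}
\item tightness from the Flandoli--Gatarek compact embeddings, with $B_n$ split into its drift and martingale parts;
\item Prokhorov and Skorokhod;
\item term-by-term passage to the limit, using the Debussche--Glatt-Holtz--Temam lemma for the stochastic integral;
\item a spatial mollification plus It\^o-formula argument giving continuity of $t\mapsto\|\tB(t)\|_{H^s}$, which together with weak continuity yields $\tB\in C([0,T];H^s)$.
\end{itemize}
Your reading of the second space in the statement as $H^{s+\frac{\a}{2}}$ is also justified. The paper's own proof only extracts a weak limit in $L^2(\tOm;L^2(0,T;H^{s+\frac{\a}{2}}))$, so it delivers $H^{s+1}$ only when $\a=2$.
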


\noindent Before proving the above, we state the following lemma due to \cite{ANR11}, which ensure the convergence of the stochastic terms in the Galerkin system. For the proof, see Appendix \ref{appendix:proof_lemma_stochastic_convergence}.
\begin{Lemma} \label{Lemma:stochastic_convergence}
Let $X$ be a separable Hilbert space and $\left( \Om, \Ff, \P \right)$ be a probability space. Consider a sequence of stochastic bases $\Ss^n = \left( \Om, \Ff, \F^n, \P \right)$ and $\W^n$ a sequence of $\F$-adapted cylindrical Wiener process with reproducing kernel Hilbert space $\mathscr{U}$. Suppose that the sequence $\{G^n\}_{n \in \N}$ of $X$-valued $\F^n$ predictable processes satisfy that $G^n \in L^2([0,T];L_2(\mathscr{U},X))$ almost surely. In addition, suppose also that we have a stochastic basis $\Ss = \left( \Om, \Ff, \F, \P \right)$ and an $\F$ predictable process $G \in L^2 ([0,T];L_2(\mathscr{U},X))$. Then if 
\begin{equation*}
W^n \to W \ \ \text{in probability in } C([0,T];\mathscr{U}) \ \ \text{and} \quad
G^n \to G \ \ \text{in probability in } C([0,T];L_2(\mathscr{U},X)),
\end{equation*}
it holds that 
$\displaystyle \int_0^t G^n dW^n \to \int_0^t G dW  \ \ \text{in probability in } C([0,T];X).$
\end{Lemma}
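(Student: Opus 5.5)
The plan is to reduce the claim to the classical convergence of Itô integrals for elementary (step) integrands. For those integrands convergence is explicit, and a uniform approximation argument handles the rest.

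\textbf{Step 1: step approximations.} For $\varepsilon>0$, choose a partition $0=t_0<t_1<\dots<t_m=T$ of mesh $\delta$. Define the left-endpoint step processes
\[
G^n_\delta(t):=\sum_{j} G^n(t_j)\mathbbm{1}_{(t_j,t_{j+1}]}(t),\qquad G_\delta(t):=\sum_j G(t_j)\mathbbm{1}_{(t_j,t_{j+1}]}(t).
\]
These are $\F^n$-, respectively $\F$-, predictable. Their stochastic integrals are finite sums:
\[
\int_0^t G^n_\delta\, d\W^n=\sum_j G^n(t_j)\bigl(\W^n(t_{j+1}\wedge t)-\W^n(t_j\wedge t)\bigr).
\]
Here $G^n(t_j)\in L_2(\mathscr{U},X)$ is Hilbert--Schmidt. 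Hence it extends to a continuous map from the path space of $\W^n$, understood in a larger space $\mathscr{U}_0\supset\mathscr{U}$ with Hilbert--Schmidt embedding, into $X$. This extension is the standard device used when the cylindrical process is viewed in $C([0,T];\mathscr{U})$. Since $G^n(t_j)\to G(t_j)$ and $\W^n\to\W$ in probability, the finite sum converges in probability in $C([0,T];X)$ for each fixed partition.

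\textbf{Step 2: uniform control of the step error.} I would use the Itô isometry in the form of a Lenglart / Chebyshev-type inequality for stochastic integrals. For any $\eta,\kappa>0$,
\[
\P\Bigl(\sup_{t\le T}\Bigl\|\int_0^t (G^n-G^n_\delta)\,d\W^n\Bigr\|_X>\kappa\Bigr)\le \frac{\eta}{\kappa^2}+\P\Bigl(\int_0^T\|G^n-G^n_\delta\|^2_{L_2(\mathscr{U},X)}dt>\eta\Bigr).
\]
This avoids any moment assumptions on $G^n$. The quantity $\int_0^T\|G^n-G^n_\delta\|^2dt$ is bounded by $T\,\omega(G^n,\delta)^2$, where $\omega$ is the modulus of continuity in $C([0,T];L_2(\mathscr{U},X))$.

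\textbf{Step 3: equicontinuity in probability (main obstacle).} Because $G^n\to G$ in probability in $C([0,T];L_2)$, we have $\omega(G^n,\delta)\le \omega(G,\delta)+2\|G^n-G\|_{C}$. The first term is small for small $\delta$, almost surely, since $G$ is continuous. The second term is small in probability for large $n$. This makes the error uniformly small in $n\ge n_0$, and the same bound applies to $G$ itself. The delicate point is this uniformity: $\delta$ must be chosen before $n$, and $\delta$ cannot depend on $\omega$. That is why I would use the continuity modulus of the limit plus the $C$-convergence, rather than compactness of $\{G^n\}$.

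\textbf{Step 4: conclusion.} Combine the three-$\varepsilon$ decomposition
\[
\int G^n d\W^n-\int G d\W=\int(G^n-G^n_\delta)d\W^n+\Bigl(\int G^n_\delta d\W^n-\int G_\delta d\W\Bigr)+\int(G_\delta-G)d\W.
\]
Fix $\delta$ using Step 3, then let $n\to\infty$ using Step 1. This gives convergence in probability in $C([0,T];X)$.
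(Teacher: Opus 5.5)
Your Step 1 has a genuine gap. A Hilbert--Schmidt operator $A\in L_2(\mathscr{U},X)$ does \emph{not} in general extend to a bounded operator on the larger space $\mathscr{U}_0$ where the cylindrical process has continuous paths. Take $\mathscr{U}=\ell^2$ and $\|u\|_{\mathscr{U}_0}^2=\sum_k k^{-2}u_k^2$, so that $\mathscr{U}\hookrightarrow\mathscr{U}_0$ is Hilbert--Schmidt. Set $Ae_k=k^{-3/4}f_k$ with $(f_k)$ orthonormal in $X$. Then $A$ is Hilbert--Schmidt, but $\|Ae_k\|_X/\|e_k\|_{\mathscr{U}_0}=k^{1/4}\to\infty$.

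Consequently $G^n(t_j)\bigl(\W^n(t_{j+1})-\W^n(t_j)\bigr)=\sum_k G^n(t_j)e_k\,\bigl(W^{n,k}(t_{j+1})-W^{n,k}(t_j)\bigr)$ is only an $L^2$-limit of a series. It is not a continuous function of $(G^n(t_j),\W^n)\in L_2(\mathscr{U},X)\times C([0,T];\mathscr{U}_0)$. So $G^n(t_j)\to G(t_j)$ and $\W^n\to\W$ in probability do not yield convergence of your time-sums. You also cannot split the difference as $(G^n(t_j)-G(t_j))\Delta\W^n+G(t_j)(\Delta\W^n-\Delta\W)$, because $G(t_j)$ is not $\F^n$-measurable, and those series need not even converge. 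Passing to $\mathscr{U}_0$ was the right move, since a cylindrical process has no $\mathscr{U}$-valued paths, but the extension claim is exactly what breaks.

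The missing ingredient is the truncation in the noise index, which the paper performs first. Write $\int G^n\,d\W^n=\sum_{k\le N}\int G^n e_k\,dW^{n,k}+\int G^n(I-P_N)\,d\W^n$. Control the tail uniformly in $n$ with your Step 2 inequality, using $\int_0^T\|G^n(I-P_N)\|_{L_2}^2dt\le 2\int_0^T\|G^n-G\|_{L_2}^2dt+2\int_0^T\sum_{k>N}\|Ge_k\|_X^2dt$. The last term tends to $0$ a.s. as $N\to\infty$ by dominated convergence, and the same bound handles the limit $G$.

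For finitely many modes, $\sum_{k\le N}\sum_j G^n(t_j)e_k\,\Delta W^{n,k}$ is a continuous function of finitely many $X$-valued coefficients and real Brownian paths. Here $W^{n,k}\to W^k$ uniformly when the coordinate functionals are continuous on $\mathscr{U}_0$, as in the standard construction. With that, your Steps 1--4 go through: Lenglart's bound and the modulus-of-continuity estimate of Step 3 are correct.

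So repaired, your argument is a valid alternative to the paper's. For each fixed mode the paper mollifies in time and integrates by parts, whereas your left-endpoint Riemann sums do the same job more directly.
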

\begin{proof}[Proof of Proposition \ref{prop:martingale_solution}]
We divide the proof into two parts. First, we use a compactness argument to obtain a random variable in $U$ as the limit of the Galerkin solutions. Second, we verify that this limit is a martingale solution to \eqref{eq:EMHD_Ito_Galerkin}.

\smallskip
\noindent \textbf{Step 1: Compactness}. By Theorem \ref{Thm:compactness_thm_1}, we have the compact embedding: $$L^2(0,T;H^{s+\frac{\a}{2}}) \cap W^{\frac{1}{4},2}(0,T;H^{s-2+\frac{\a}{2}}) \subset \subset L^2(0,T;H^s).
$$ Furthermore, choosing $\gm \in (0,\haf)$ and $p \in (1,\infty)$ such that $\gm p >1$, it follows from Theorem \ref{Thm:compactness_thm_2} that the embeddings $    W^{1,2}(0,T;H^{s-2+\frac{\a}{2}}) \subset \subset C([0,T];H^{1}),  \;
    W^{\gm,p}(0,T;H^{s-2+\frac{\a}{2}}) \subset \subset C([0,T];H^{1})$
are both compact. To prove tightness of the family of measures $\{\mu^n\}_{n\in\mathbb N}$, we consider the following balls
\begin{align*}
V^1_R &= \Bigl \{ B \in L^2(0,T;H^{s+\frac{\a}{2}}) \cap W^{\frac{1}{4},2}(0,T;H^{s-2+\frac{\a}{2}}) : \| B\|^2_{L^2(0,T;H^{s+\frac{\a}{2}})} + \|B\|^2_{W^{\frac{1}{4},2}(0,T;H^{s-2+\frac{\a}{2}})} \leq R^2  \Bigr \}, \\
V^{2,1}_R &= \Bigl \{ B \in W^{1,2}(0,T;H^{s-2+\frac{\a}{2}}) : \|B\|^2_{W^{1,2}(0,T;H^{s-2+\frac{\a}{2}})} \leq R^2  \Bigr \}, \\
V^{2,2}_R &= \Bigl \{ B \in W^{\gm,p}(0,T;H^{s-2+\frac{\a}{2}}) :\|B\|^p_{W^{\gm,p}(0,T;H^{s-2+\frac{\a}{2}})} \leq R^p  \Bigr \}, \\
V^2_R &=: V^{2,1}_R +V^{2,2}_R = \Bigl \{ B = B_1 +B_2 : B_1 \in V^{2,1}_R \ \ \text{and} \ \ B_2 \in V^{2,2}_R  \Bigr \}.
\end{align*}
We see from the previous compact embeddings that $V^1_R \cap V^2_R$ is compact in $L^2 (0,T;H^s) \cap C([0,T];H^{1}).$ Applying Markov's inequality and Proposition \ref{prop:uni_energy_estimate} \ref{prop:uni_energy_estimate_1} and \ref{prop:uni_energy_estimate_4}, we deduce that 
\begin{align}
\mu^n_B \left( (V^1_R)^c \right) &= \P \Bigl ( \|B_n\|^2_{L^2(0,T;H^{s+\frac{\a}{2}})} + \|B_n \|^2_{W^{\frac{1}{4},2}(0,T;H^{s-2+\frac{\a}{2}})} > R^2 \Bigr ) \notag \\
&\leq \P \Bigl( \|B_n \|^2_{L^2(0,T;H^{s+\frac{\a}{2}})} > \haf R^2 \Bigr) + \P \Bigl( \|B_n \|^2_{W^{\frac{1}{4},2}(0,T;H^{s-2+\frac{\a}{2}})} > \haf R^2 \Bigr) \notag \\
&\leq \frac{2}{R^2} \Bigl( \E  \|B_n\|^2_{L^2(0,T;H^{s+\frac{\a}    {2}})} + \E  \|B_n\|^2_{W^{\frac{1}{4},2}(0,T;H^{s-2+\frac{\a}{2}})} \Bigr) \notag \\
&\leq \frac{C}{R^2} \bigl( 1 + \E \Hsnorm{B_0}^4 \bigr). \label{proof_prop_martingale_solutions_1}
\end{align}
In view of 
\[
\Bigl \{ B_n(t) - \int_0^t \sumk \Pp_n \Tt_k B_n dW^k_\tau \in V^{2,1}_R \Bigr \} \bigcap \Bigl \{ \int_0^t \sumk \Pp_n \Tt_k B_n dW^k_\tau \in V^{2,2}_R \Bigr \} \subset \left \{ B_n(t) \in V^2_R \right \},
\]
we obtain, by Proposition \ref{prop:uni_energy_estimate} \ref{prop:uni_energy_estimate_2} and \ref{prop:uni_energy_estimate_3}, that
\begin{align}
\mu^n_B \left( (V^2_R)^c \right) &\leq \P \Bigl( \lVert B_n - \int_0^\cdot \sumk \Pp_n \Tt_k B_n dW^k_\tau \rVert^2_{W^{1,2} (0,T;H^{s-2+\frac{\a}{2}})} \geq R^2 \Bigr) \notag \\
&+ \P \Bigl( \Big\lVert \int_0^\cdot \sumk \Pp_n \Tt_k B_n dW^{k}_\tau \Big\rVert^p_{W^{\gm,p} (0,T;H^{s-2+\frac{\a}{2}})} \geq R^p \Bigr) \notag \\
&\leq \frac{1}{R^2} \E \Big \lVert B_n - \int_0^\cdot \sumk \Pp_n \Tt_k B_n dW^k_\tau \Big \rVert^2_{W^{1,2} (0,T;H^{s-2+\frac{\a}{2}})} 
+ \frac{1}{R^p}\E \Big \lVert \int_0^\cdot \sumk \Pp_n \Tt_k B_n dW^{k}_\tau \Big\rVert^p_{W^{\gm,p} (0,T;H^{s-2+\frac{\a}{2}})} \notag \\
&\leq \frac{C}{R^2} \bigl( 1 +\E \Hsnorm{B_0}^p \bigr). \label{proof_prop_martingale_solutions_2}
\end{align}
The bounds \eqref{proof_prop_martingale_solutions_1} and \eqref{proof_prop_martingale_solutions_2} imply that 
\[
\mu^n_B \left( (V^1_R \cap V^2_R)^c \right ) \leq \mu^n_B \left ( (V^1_R)^c \right) + \mu^n_B \left ( (V^2_R)^c \right) \leq \frac{C_0}{R^2},
\]
for some $C_0 >0$ independent on $n$. Hence given any $\e > 0$, let $R = \sqrt {\frac{3C_0}{\e}}$ and $K^2_\e := V^1_R \cap V^2_R$ then we have $\mu^n_B \left( K^2_\e \right ) \geq 1 - \frac {\e}{3}.$ Moreover, since it follows directly that $\mu^n_0$ and $\mu^n_\W := \mu_\W$ are both relatively compact, from Theorem \ref{Thm:Prokhorov} we conclude that they are both tight. Hence we may find $K^1_\e \subset H^s$ and $K^3_\e \subset C([0,T];\mathscr{U})$ compact such that 
\[
\mu^n_0 (K^1_\e) \geq 1- \frac{\e}{3} \ \ \text{and} \ \  \mu^n_\W (K^3_\e) \geq 1- \frac{\e}{3}.
\]
Therefore for any $\e >0$, let $K_\e := K^1_\e \times K^2_\e \times K^3_\e$ a compact set in $U$, it holds that $\mu^n(K_\e) \geq 1- \e,$ for all $n \in \N$. As a result, $\{\mu^n \}_{n \in \N}$ is tight in $U$. Again by Theorem \ref{Thm:Prokhorov}, the sequence $\{\mu^n \}_{n \in \N}$ is relatively compact. Therefore we have a weakly convergent subsequence $\mu^{n_j} \to \mu$. Now from Theorem \ref{Thm:Skorokhod} there exists a probability space $\bigl (\widetilde{\Om}, \tFf, \tP \bigr)$, a subsequence of $U$-valued random variables $\bigl(\tB^0_{n_k}, \tB_{n_k}, \widetilde{\W}_{n_k} \bigr)$ such that
\begin{equation}\label{proof_prop_martingale_solutions_3}
\lim_{k \to \infty} \bigl(\tB^0_{n_k}, \tB_{n_k}, \widetilde{\W}_{n_k} \bigr) = \bigl(\tB^0, \tB, \widetilde{\W} \bigr) \ \ \text{in} \ \ U, \ \ \tP\text{-a.s.}
\end{equation}
where the random variable $\bigl(\tB^0, \tB, \widetilde{\W} \bigr)$ has the law $\mu$. In addition, each $\bigl(\tB^0_{n_k}, \tB_{n_k}, \widetilde{\W}_{n_k} \bigr)$ is a martingale solution to \eqref{eq:EMHD_Ito_Galerkin} with $n = n_k$ and the law of $\tB_0$ is the same as that of $B_0$. 
\vspace{1em}\\
\noindent \textbf{Step 2: Passage to the limit}.
We next show that the limit in \eqref{proof_prop_martingale_solutions_3} is indeed the martingale solution to \eqref{eq:EMHD_Ito_Galerkin} with the stochastic basis $\widetilde{S} = \bigl ( \widetilde{\Om}, \tFf , \tF, \tP \bigr)$, where $\tF$ is the filtration generated by $\tB$ and $\tW$. To that end, we will first show the improvement of the regularity of the limit $\tB$ via Proposition \ref{prop:uni_energy_estimate}. Using the Banach-Alaoglu theorem, Proposition \ref{prop:uni_energy_estimate} \ref{prop:uni_energy_estimate_1} with $p=2$, we obtain a further subsequence, still denoted as $\tB_{n_k}$, such that 
\begin{equation}\label{proof_prop_martingale_solutions_4}
\tB_{n_k} \rightharpoonup \tB_1 \ \ \ \text{in} \ \ L^2\bigl(\widetilde{\Om};L^2(0,T;H^{s+\frac{\a}{2}})\bigr),\quad \text{and}
\end{equation}
\begin{equation}\label{proof_prop_martingale_solutions_5}
\tB_{n_k} \overset{\ast}{\rightharpoonup} \tB_2 \ \ \ \text{in} \ \ L^2\bigl(\widetilde{\Om};L^\infty(0,T;H^{s})\bigr),
\end{equation}\\
for some $\tB_1 \in L^2\bigl(\widetilde{\Om};L^2(0,T;H^{s+1})\bigr)$ and $\tB_2 \in L^2\bigl(\widetilde{\Om};L^\infty(0,T;H^{s})\bigr)$. In addition, again by Proposition \ref{prop:uni_energy_estimate} \ref{prop:uni_energy_estimate_1} setting $p=2$, the almost sure convergence \eqref{proof_prop_martingale_solutions_3} and Vitali convergence theorem we have that
\begin{equation}\label{proof_prop_martingale_solutions_6}
\tB_{n_k} \to \tB \ \ \ \text{in} \ \ L^2\bigl(\widetilde{\Om};L^2(0,T;H^s)\bigr),
\end{equation}
hence $\tB = \tB_1$. Furthermore, fix any $\psi \in H^{-s}$ and $M \subset \tOm \times [0,T]$ measurable, then 
\begin{align*}
\E \int_0^T \mathbbm{1}_M \<\tB_{2}, \psi \> dt = \lim_{k \to \infty} \E \int_0^T \mathbbm{1}_M \<\tB_{n_k}, \psi \> dt = \E \int_0^T \mathbbm{1}_M \<\tB, \psi \> dt.
\end{align*}
Thus $\tB = \tB_1 = \tB_2$ and 
\begin{equation}\label{proof_prop_martingale_solutions_7}
\tB \in L^2 \bigl(\tOm; L^2 (0,T;H^{s+1})\bigr) \cap L^2\bigl(\tOm;L^\infty(0,T;H^s)\bigr).
\end{equation}
Next, we prove that $\bigl(\tB^0,\tB,\tW\bigr)$ is a martingale solution by the passage of the limit. Recall that for each $k$ the random variable $\bigl(\tB^0_{n_k},\tB_{n_k},\tW_{n_k}\bigr)$ is a martingale solution to \eqref{eq:EMHD_Ito_Galerkin}, i.e., for any function $\phi \in H$, one has
\begin{align*}
&\<\tB_{n_k}(t), \phi \> + \int_0^t \left < \chi^2_r \Pp_{n_k} \curl (\curl \tB_{n_k}) \times \tB_{n_k} + \lb^{\a} \tB_{n_k}, \phi \right > d\tau \\
= \ &\< \tB^0_{n_k}, \phi \> + \haf \sum_{j=1}^\infty \int_0^t \< \Pp_{n_k} \Tt^2_j \tB_{n_k}, \phi \> d\tau + \sumj \int_0^t \Ltwoinner{\Pp_{n_k}\Tt_j \tB_{n_k},\phi} d\widetilde{W}^{n_k, j}_{\tau} ,
\end{align*}
for all $t \in [0,T]$, almost surely in $\tP$. Recall that the almost sure convergence $\eqref{proof_prop_martingale_solutions_3}$, we first have that 
\begin{equation}\label{proof_prop_martingale_solutions_8}
\Ltwoinner{\tB_{n_k}(t),\phi} \to \Ltwoinner{\tB(t),\phi} \ \ \text{and} \ \ \Ltwoinner{\tB^0_{n_k},\phi} \to \Ltwoinner{\tB^0,\phi}.
\end{equation}
We then proceed to establish the convergence to other linear terms, including: 
\begin{align}
\biggl | \int_0^t \Ltwoinner{\lb^\a \tB_{n_k}, \phi} d\tau - \int_0^t \Ltwoinner{\lb^\a \tB, \phi} d\tau  \biggr | &\les \int_0^T \twonorm{\lb^\a (\tB_{n_k}-\tB)} \twonorm{\phi} dt \notag \\ 
&\les \twonorm{\phi} \biggl( \int_0^T  \Hsnorm{\tB_{n_k}-\tB}^2 dt \biggr)^\haf \longrightarrow 0 ,\label{proof_prop_martingale_solutions_9}
\end{align}
and 
\begin{align}
&\biggl | \haf \sumj \int_0^t \Ltwoinner{\Pp_{n_k} \Tt^2_j \tB_{n_k} ,\phi} d\tau -\haf \sumj \int_0^t \Ltwoinner{\Tt^2_j \tB, \phi} d\tau  \biggr|  \notag \\ 
\leq \ & \haf \sumj \int_0^t \Bigl |\Ltwoinner{\Pp_{n_k} \Tt^2_j \bigl( \tB_{n_k} - \tB \bigr),\Pp_{n_k} \phi} \Bigr | d\tau + \haf \sumj \int_0^t \left| \Ltwoinner{\Tt^2_j \tB, \Pp_{n_k}\phi - \phi} \right| d\tau    \notag \\
\les \ & \ltwoHssnorm{c}^2  \twonorm{\phi} \biggl ( \int_0^T \Hsnorm{\tB_{n_k} - \tB}^2 dt \biggr)^\haf \notag \\
+\ &\ltwoHssnorm{c}^2 \twonorm{\Pp_{n_k}\phi - \phi} \biggl( \int_0^T \Hsnorm{\tB}^2 dt \biggr)^\haf   \longrightarrow 0, \label{proof_prop_martingale_solutions_10}
\end{align}
by the almost sure convergence \eqref{proof_prop_martingale_solutions_3}. As for the convergence of the nonlinearities, we approach it as follows:
\begin{align*}
    &\biggl | \int_0^t \Ltwoinner{\chi^2_r (\tB_{n_k}) \Pp_{n_k} \curl \bigl( (\curl \tB_{n_k}) \times \tB_{n_k}\bigr) , \phi } d\tau - \int_0^t \Ltwoinner{\chi^2_r (\tB) \curl \bigl( (\curl \tB) \times \tB \bigr) , \phi } d\tau  \biggr| \\ 
\leq \   & \int_0^t \left| \Ltwoinner{\chi^2_r (\tB_{n_k}) \curl \bigl( (\curl \tB_{n_k}) \times \tB_{n_k}\bigr) , \Pp_{n_k}\phi -\phi} \right| d\tau \\
+ \ & \int_0^t \left |\Ltwoinner{ \bigl( \chi^2_r (\tB_{n_k}) -\chi_r^2 (\tB) \bigr) \curl \bigl( (\curl \tB_{n_k}) \times \tB_{n_k} \bigr) , \phi } \right| d\tau   \\
+ \ & \int_0^t \left | \Ltwoinner{  \chi^2_r (\tB) \Bigl ( \curl \bigl( (\curl \tB_{n_k}) \times \tB_{n_k} \bigr) -  \curl \bigl( (\curl \tB) \times \tB \bigr) \Bigr)  , \phi  }  \right| d\tau  =: J_1 + J_2 + J_3.
\end{align*}
For $J_1$, we use Sobolev embedding to obtain
\begin{equation}\label{proof_prop_martingale_solutions_11}
J_1 \les \twonorm{\Pp_{n_k}\phi - \phi} \int_0^T \Hsnorm{\tB_{n_k}}^2 dt \to 0, \ \ \ \text{as} \ \ k \to \infty. 
\end{equation}
Similarly, using Sobolev embedding and the fact that $s > \frac{5}{2}$ yields
\begin{align}
J_2 &= \int_0^t \left |\Ltwoinner{ \bigl( \chi^2_r (\tB_{n_k}) -\chi_r^2 (\tB) \bigr) \curl \bigl( (\curl \tB_{n_k}) \times \tB_{n_k} \bigr) , \phi } \right| d\tau \notag\\
&\les \int_0^T \left |\Ltwoinner{|\tB_{n_k} -\tB| \curl (\curl \tB_{n_k}) \times \tB_{n_k}, \phi} \right| dt \notag \\
&\les \biggl( \int_0^T \Hsnorm{\tB_{n_k} - \tB}^2 dt \biggr)^\haf \biggl( \int_0^T \left | \Ltwoinner{\curl (\curl \tB_{n_k}) \times \tB_{n_k},\phi} \right|^2 dt \biggr)^\haf \notag \\
&\les \biggl( \int_0^T \Hsnorm{\tB_{n_k} - \tB}^2 dt \biggr)^\haf \biggl( \int_0^T \Hsnorm{\tB_{n_k}}^2 \sixnorm{\tB_{n_k}}^2 dt \biggr)^\haf \twonorm{\phi} \notag \\
&+ \biggl( \int_0^T \Hsnorm{\tB_{n_k} - \tB}^2 dt \biggr)^\haf \biggl( \int_0^T \Hsnorm{\tB_{n_k}}^2 \|\tB_{n_k}\|^2_{H^{1}} dt \biggr)^\haf \twonorm{\phi} \notag\\
&\les \twonorm{\phi} \sup_{t \in [0,T]} \|\tB_{n_k}\|_{H^{1}} \biggl( \int_0^T \Hsnorm{\tB_{n_k}}^2 dt \biggr)^\haf \biggl( \int_0^T \Hsnorm{\tB_{n_k} - \tB}^2 dt \biggr)^\haf \notag \\ &\les \biggl( \int_0^T \Hsnorm{\tB_{n_k} - \tB}^2 dt \biggr)^\haf \to 0  \ \ \text{as} \ \ k \to \infty, \label{proof_prop_martingale_solutions_12}
\end{align}
where we also utilize the almost sure convergence \eqref{proof_prop_martingale_solutions_3} in $U$. Furthermore, we have 
\begin{align}
J_3 &= \int_0^t \left | \Ltwoinner{  \chi^2_r (\tB) \Bigl ( \curl \bigl( (\curl \tB_{n_k}) \times \tB_{n_k} \bigr) -  \curl \bigl( (\curl \tB) \times \tB \bigr) \Bigr)  , \phi  }  \right| d\tau \notag \\
&\leq \int_0^t \left | \Ltwoinner{\curl \bigl( (\curl \tB_{n_k}) \bigr) \times (\tB_{n_k} - \tB) , \phi} \right | d\tau  + \int_0^t \left | \Ltwoinner{\curl \bigl(\curl \tB_{n_k} - \curl \tB \bigr)  \times  \tB   , \phi} \right | d\tau \notag \notag \\
&\les \int_0^T \Hsnorm{\tB_{n_k}} \Linfnorm{\tB_{n_k} - \tB} \twonorm{\phi} dt \notag \\ 
&+ \twonorm{\phi} \int_0^T \left ( \Linfnorm{\tB} \Hsnorm{\tB_{n_k} - \tB} + \Linfnorm{\nabla \tB} \Honenorm{\tB_{n_k} - \tB}  \right )dt \notag \\
&\les \biggl ( \Bigl( \int_0^T \Hsnorm{\tB_{n_k}}^2 dt \Bigr)^\haf +  \Bigl( \int_0^T \Hsnorm{\tB}^2 dt \Bigr)^\haf  \biggr ) \biggl( \int_0^T \Hsnorm{\tB_{n_k} - \tB}^2 dt \biggr)^\haf \to 0. \label{proof_prop_martingale_solutions_12}
\end{align}\\
Finally, we show the convergence of the stochastic terms. Note that
\begin{align}
    &\sup_{t \in [0,T]} \biggl|  \sumj \Ltwoinner{\Pp_{n_k} \Tt_j \tB_{n_k},\phi} - \sumj \Ltwoinner{\Tt_j \tB,\phi}  \biggr| \notag \\
 \les \  &\sup_{t \in [0,T]}  \biggl(  \sumj \left|\Ltwoinner{ \Tt_j \tB_{n_k},\Pp_{n_k}\phi - \phi} \right| + \sumj \left| \Ltwoinner{\Tt_j \bigl ( \tB_{n_k} -\tB \bigr),\phi}  \right| \biggr) \notag \\
  \les \  & \ltwoHssnorm{c} \biggl( \twonorm{\Pp_{n_k} \phi - \phi} \sup_{t\in [0,T]} \Honenorm{\tB_{n_k}} + \twonorm{\phi} \Honenorm{\tB_{n_k} - \tB}  \biggr) \to 0,   \label{proof_prop_martingale_solutions_13}
\end{align}
almost surely as $k \to \infty$. We again used the almost sure convergence of $\tB_{n_k} \to \tB$ in $C([0,T];H^{1})$ from \eqref{proof_prop_martingale_solutions_3}. In addition, since we also have $\tW_{n_k} \to \tW$ almost surely in $C([0,T];\mathscr{U})$. By Lemma~\ref{Lemma:stochastic_convergence} we get 
\begin{equation}\label{proof_prop_martingale_solutions_14}
\sumj \int_0^t \Ltwoinner{\Pp_{n_k} \Tt_j \tB_{n_k},\phi} d\tw^{n_k,j}_{\tau} \longrightarrow \sumj \int_0^t \Ltwoinner{\Tt_j \tB,\phi} d\tw^j_{\tau},
\end{equation}
in $C([0,T];\R)$ in probability. We can then pass a further subsequence of $\tW_{n_k}$ such that \eqref{proof_prop_martingale_solutions_14} is convergent almost surely in $C([0,T];\R)$. Therefore we have shown that $\tB$ satisfies Definition~\ref{def_Martingale_solution} of the equation
\begin{equation}\label{proof_prop_martingale_solutions_15}
d\tB + \Bigl ( \chi^2_r \curl \bigl( ( \curl \tB )\times \tB \bigr) +\mu \lb^\a \tB \Bigr) dt = \haf \sum_{k =1}^\infty \Pp_n \Tt^2_k \tB dt + \sum_{k=1}^\infty \Pp_n \Tt_k \tB d\tw^k.
\end{equation}
Now it suffices to show the time regularity for $\tB$, i.e.,
$\tB \in L^2 (\tOm; C([0,T],H^s)).$ Using density argument, one can first show that for each sample path, $t \longrightarrow \Ltwoinner{\tB(t),\phi} $ is weakly continuous in $H^{s}$, $\tP$-a.s. Hence we only need to show that the map $t \longrightarrow \Hsnorm{\tB}$ is continuous $\tP$-a.s. \\

Let $\e > 0$, define the standard spatial mollifier $\phi_\e$ and the mollification operator $J_\e$ as 
\[
J_\e f(x) := \left( \phi_\e \ast f \right) (x).
\]
See also Appendix \ref{appendix:proof_lemma_stochastic_convergence} for a similar definition on the temporal mollifier. Now applying the operator to equation \eqref{proof_prop_martingale_solutions_15} brings 
\begin{align*}
J_\e \tB (t) + \int_0^t J_\e \Bigl ( \chi^2_r \curl \bigl ( ( \curl \tB )\times \tB \bigr )  +\mu \lb^\a \tB \Bigr) d\tau  = J_\e \tB(0) + \haf \sumk \int_0^t  J_\e \Tt^2_k \tB d\tau + \sumk \int_0^t J_\e \Tt_k \tB d\tw^k_\tau. 
\end{align*}
Thanks to It\^o's formula to $\twonorm{\lb^s J_\e \tB(t)}^2$ as in Lemma~\ref{lemma:Ito_formula_EMHD}, we arrive at
\begin{align*}
    \twonorm{\lb^s J_\e \tB(t) }^2 &= \twonorm{\lb^s J_\e \tB(0)}^2 - 2 \int_0^t \Ltwoinner{J_\e \chi_r^2 \bigl(\curl (\curl \tB) \times \tB \bigr) + J_\e \lb^\a \tB, \lb^{2s} J_\e \tB } d\tau \\ &+ 2  \sumk \int_0^t \left( \Ltwoinner{J_\e \Tt^2_k \tB, \lb^{2s} J_\e \tB } + \twonorm{\lb^s J_\e \Tt_k \tB}^2 \right) d\tau \\
    &+ 2  \sumk \int_0^t \Ltwoinner{J_\e \Tt_k \tB, J_\e \lb^{2s} \tB } d\tw^k_\tau. 
\end{align*}
Each time integrals of the above equation can be estimated in a similar manner as \eqref{proof:uniform_energy_estimate_2}, \eqref{proof:uniform_energy_estimate_4} and \eqref{proof:uniform_energy_estimate_6}, hence we have 
\begin{align*}
\int_0^t \left| \Ltwoinner{J_\e \chi_r^2 \bigl(\curl (\curl \tB) \times \tB \bigr) + J_\e \lb^\a \tB, \lb^{2s} J_\e \tB } \right| d\tau &\les \int_0^t \Hsnorm{\tB}^2 d\tau,  \\
\sumk \int_0^t \left( \Ltwoinner{J_\e \Tt^2_k \tB, \lb^{2s} J_\e \tB } + \twonorm{\lb^s J_\e \Tt_k \tB}^2 \right) d\tau &\les \ltwoHssnorm{c_\ell}^2 \int_0^t \Hsnorm{\tB}^2 d\tau. 
\end{align*}
For the stochastic term, using Burkholder-Davis-Gundy Inequality (Theorem~\ref{Thm:Burkholder-Davis-Gundy Inequality}) and Lemma~\ref{lemma:commutator_est_1} yields
\begin{align*}
&\E \sup_{t \in [0,T]} 2 \Bigl| \int_0^t \sumk \left( \Ltwoinner{J_\e \Tt_k \tB, J_\e \lb^{2s} \tB } - \Ltwoinner{\Tt_k \tB,\lb^{2s}\tB} \right) d\tw^k_\tau \Bigr| \\
=\ & 2 \E \sup_{t \in [0,T]} \Bigl| \int_0^t \sumk \Ltwoinner{\lb^s \Tt_k \tB, \lb^s J^2_\e \tB - \lb^s \tB} d\tw^k_\tau  \Bigr| \\
\les \ &\E \biggl( \int_0^T \sumk \Ltwoinner{\lb^s \Tt_k \tB, \lb^s J_\e^2 \tB -\lb^s \tB}^2 d\tau \biggr)^\haf  \\
\end{align*}
\begin{align*}
\les \ & \E \biggl( \int_0^T \sumk \Ltwoinner{ [\lb^s, \Tt_k ] \tB, \lb^s J_\e^2 \tB -\lb^s \tB}^2 d\tau \biggr)^\haf + \E \biggl( \int_0^T \sumk \Ltwoinner{ [J_\e, \Tt_k ] \lb^s \tB, \lb^s J_\e \tB}^2 d\tau \biggr)^\haf \\
\les \ & \E \biggl( \int_0^T \Bigl( \sumk \twonorm{[\lb^s, \Tt_k ] \tB}^2\Bigr) \twonorm{\lb^s J_\e^2 \tB -\lb^s \tB}^2 d\tau \biggr)^\haf \\
+ \ & \E \biggl( \int_0^T \Bigl( \sumk  \twonorm{ [J_\e, \Tt_k ] \lb^s \tB}^2 \Bigr) \twonorm{ \lb^s J_\e \tB}^2 d\tau \biggr)^\haf \\
\les \ & \E \biggl( \sup_{t \in [0,T]} \Hsnorm{\tB(t)} \Bigl( \int_0^T \twonorm{\lb^s J_\e^2 \tB -\lb^s \tB}^2 d\tau \Bigr)^\haf  \biggr) \\
+ \ & \E \biggl( \sup_{t \in [0,T]} \Hsnorm{\tB(t)} \Bigl( \int_0^T \sumk \twonorm{ [J_\e,\Tt_k] \tB}^2 d\tau \Bigr)^\haf  \biggr) \\
\les \ & \d \E \Bigl( \sup_{t \in [0,T]} \Hsnorm{\tB} \Bigr) + \frac{1}{\d} \E \biggl( \int_0^T \twonorm{\lb^s J_\e^2 \tB -\lb^s \tB}^2 d\tau + \int_0^T \sumk \twonorm{ [J_\e,\Tt_k] \tB}^2 d\tau \biggr) \to 0,
\end{align*}
by first sending $\e \to 0$ and then $\d \to 0$. Here we use the property of the mollification operator $J_\e$. Thus we can extract a sequence $\e_n \to 0$ such that 
\[
\lim_{n \to \infty } 2 \int_0^t \sumk  \Ltwoinner{J_{\e_n} \Tt_k \tB, J_{\e_n} \lb^{2s} \tB } d\tw^k_\tau =  2 \int_0^t \sumk \Ltwoinner{\Tt_k \tB,\lb^{2s}\tB}  d\tw^k_\tau, 
\]
$\tP$-a.s in $C([0,T];\R)$. As a result, for any $t,s \in [0,T]$ and $\tP$-a.s, it holds that
\begin{align*}
&\left| \Hsnorm{\tB(t)}^2 -\Hsnorm{\tB(s)}^2 \right| = \lim_{n \to \infty} \left| \Hsnorm{J_{\e_n} \tB(t)}^2 -\Hsnorm{J_{\e_n} \tB(s)}^2 \right| \\
\les \ &  \int_s^t  \Hsnorm{\tB(\tau)}^2   d\tau +\Bigl| \int_s^t \sumk \Ltwoinner{\Tt_k \tB,\lb^{2s}\tB}  d\tw^k_\tau \Bigr|,
\end{align*}
which implies the continuity of the map $t \to \Hsnorm{\tB}$. Combining with the weak continuity, we conclude that $\tB \in L^2 (\tOm;C([0,T];H^s))$. 
\end{proof}

\section{Uniqueness and proof of the main result}\label{Sec:proof_of_main_Thm}
In this section, we first establish pathwise uniqueness and then combine it with the existence result and the Yamada--Watanabe--type theorem to complete the proof of the main result.
\subsection{Pathwise uniqueness} We establish in this section the pathwise uniqueness of martingale solutions of the system \eqref{eq:EMHD_cutoff_Ito_1}-\eqref{eq:EMHD_cutoff_Ito_3} obtained from Proposition \ref{prop:martingale_solution}. 

\begin{Proposition}\label{prop:pathwise_uniqueness}
Under the same assumptions as in Proposition \ref{prop:uni_energy_estimate}, let $\left(\Ss,\W,B_1 \right)$ and $\left(\Ss,\W,B_2 \right)$ be two martingale solution on the cutoff system \eqref{eq:EMHD_cutoff_Ito_1} - \eqref{eq:EMHD_cutoff_Ito_3} on $[0,T]$ with the same initial data $B_0 \in L^2(\Om;H^s)$, the same stochastic basis $\Ss$ as well as noise $\W$. Suppose further that $s > 3$, then the following holds:
\[
\P\left( B_1 = B_2, \forall t \in [0,T] \right) = 1.
\]
\end{Proposition}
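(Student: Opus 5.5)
We estimate the difference $\bar B := B_1 - B_2$ in a low norm, namely $L^2$. We use stopping times to exploit the cutoff, and close with a stochastic Gr\"onwall argument. Since $B_1,B_2$ are martingale solutions on the same basis $\Ss$ with the same noise, $\bar B$ solves
\begin{align*}
d\bar B + \Bigl( \chi_r^2(B_1)\curl\bigl((\curl B_1)\times B_1\bigr) - \chi_r^2(B_2)\curl\bigl((\curl B_2)\times B_2\bigr) + \lb^\a \bar B\Bigr)dt = \haf\sumk \Tt_k^2\bar B\,dt + \sumk \Tt_k \bar B\, dW^k,
\end{align*}
with $\bar B(0)=0$. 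For $R>0$ we introduce the stopping time $\tau_R := \inf\{t\ge 0: \Hsnorm{B_1(t)}+\Hsnorm{B_2(t)} \ge R\}\wedge T$. This is well defined because $B_i\in C([0,T];H^s)$ a.s. by Proposition \ref{prop:martingale_solution}, and $\tau_R\uparrow T$ a.s.

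\textbf{Step 1 (It\^o formula in $L^2$).} We apply It\^o's formula (Lemma \ref{lemma:Ito_formula_EMHD}) to $\twonorm{\bar B}^2$. Because each $c_k$ is divergence free, $\Tt_k$ is skew-adjoint. Hence $\<\Tt_k^2\bar B,\bar B\> + \twonorm{\Tt_k\bar B}^2 = 0$ and $\<\Tt_k\bar B,\bar B\>=0$, so the It\^o correction cancels exactly and the martingale term vanishes. What remains is the deterministic inequality
\begin{align*}
\frac{d}{dt}\twonorm{\bar B}^2 + 2\twonorm{\lb^{\a/2}\bar B}^2 = -2\Bigl\< \chi_r^2(B_1)\curl\bigl((\curl B_1)\times B_1\bigr) - \chi_r^2(B_2)\curl\bigl((\curl B_2)\times B_2\bigr), \bar B\Bigr\>.
\end{align*}
If we prefer to work in a higher norm such as $H^1$ instead, the commutator Lemmas \ref{lemma:commutator_est_1}--\ref{lemma:commutator_est_2} control the noise terms as in \eqref{proof:uniform_energy_estimate_1} and \eqref{proof:uniform_energy_estimate_3}. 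In that case BDG handles the martingale, which is the ``stochastic Gr\"onwall'' route.

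\textbf{Step 2 (the Hall difference).} We split the nonlinearity as
\[
\bigl(\chi_r^2(B_1)-\chi_r^2(B_2)\bigr)\curl\bigl((\curl B_1)\times B_1\bigr) + \chi_r^2(B_2)\Bigl(\curl\bigl((\curl \bar B)\times B_1\bigr) + \curl\bigl((\curl B_2)\times \bar B\bigr)\Bigr).
\]
\begin{enumerate}
\item \emph{Cutoff difference.} Since $\chi_r$ is Lipschitz, this term is bounded by $\|\bar B\|_{W^{1,\infty}}\,\|B_1\|_{H^{3}}\|B_1\|_{W^{1,\infty}}\twonorm{\bar B}$. This is where the extra derivative is felt: the factor $\|\bar B\|_{W^{1,\infty}}$ cannot be bounded by $\twonorm{\bar B}$. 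We therefore redo Steps 1--2 in $H^{1}$, or better in $H^{s'}$ with $s'>5/2$. We still have $\|\bar B\|_{W^{1,\infty}}\lesssim \|\bar B\|_{H^{s'}}$, and $\|B_1\|_{H^{s'+2}}$ is integrable in time (interpolating $H^s$ with $H^{s+\a/2}$ and using $s>3$).
\item \emph{Term with $\curl\bar B$.} After integration by parts this becomes $\<(\curl\bar B)\times B_1,\curl\bar B\>=0$, the key cancellation. At the level of $\lb^{s'}$ the same cancellation leaves only commutators, which we treat exactly as in Lemma \ref{lemma:est_hall_term}.
\item \emph{Remaining term.} This term involves at most $s'+1$ derivatives of $\bar B$ multiplied by $\|B_2\|_{H^{s'+2}}$. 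The extra half-derivative is absorbed by $\|\bar B\|_{H^{s'+\a/2}}$, using $\a>1$ and Young's inequality.
\end{enumerate}

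\textbf{Step 3 (closing).} Steps 1--2 yield an inequality of the form
\[
d\|\bar B\|^2_{H^{s'}} \le C_{r}\bigl(1+\|B_1\|^2_{H^{s+\a/2}}+\|B_2\|^2_{H^{s+\a/2}}\bigr)\|\bar B\|^2_{H^{s'}}\,dt + dM_t,
\]
where $M$ is a local martingale (zero in the $L^2$ setting). The coefficient $\eta(t):=C_r(1+\sum_i\|B_i\|^2_{H^{s+\a/2}})$ is a.s. integrable on $[0,T]$, by \eqref{prop:martingale_solution_1} applied to each $B_i$ (both are martingale solutions of the cutoff system constructed as in Proposition \ref{prop:martingale_solution}). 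The stochastic Gr\"onwall lemma, or equivalently multiplying by $e^{-\int_0^t\eta}$ and taking expectations up to $\tau_R$, then gives $\E\sup_{t\le\tau_R}e^{-\int_0^t\eta}\|\bar B\|^2_{H^{s'}}=0$. Letting $R\to\infty$ gives $\bar B\equiv0$ on $[0,T]$ a.s.

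The main obstacle is Step 2. Matching the derivative loss of the Hall term and the Lipschitz cutoff difference, which involves $W^{1,\infty}$ of $\bar B$, requires choosing the intermediate norm $H^{s'}$ with $5/2<s'\le s-\frac12$. This choice is exactly what forces $s>3$.
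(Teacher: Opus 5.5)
Your architecture matches the paper's: a difference estimate at an intermediate level $s'\le s-\frac12$, so that the Lipschitz cutoff difference is controlled by $\|\barB\|_{W^{1,\infty}}\lesssim\|\barB\|_{H^{s'}}$; the cancellation $\langle(\curl X)\times B_1,\curl X\rangle=0$; and closure with an exponential weight and an expectation. The paper uses $s'=s-\frac12$. Your $\tau_R$ localization and your splitting of the cutoff term are acceptable substitutes for the paper's identity $a^2b-c^2d=(a-c)(ab+cd)+ac(b-d)$.

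\textbf{The gap is in Step 2, items (1) and (3).} Your bounds there carry $\|B_i\|_{H^{s'+2}}$, and you claim this is time-integrable by interpolating $H^s$ with $H^{s+\a/2}$. Interpolation never goes beyond $H^{s+\a/2}$, whereas $s'>\frac52$ forces $s'+2>\frac92$. So for every $s\in(3,\frac72]$, $\|B_i\|_{H^{s'+2}}$ is not controlled by the available regularity, whether that is $L^\infty_tH^s\cap L^2_tH^{s+\a/2}$ or even $L^2_tH^{s+1}$. Step 3 then silently replaces this coefficient by $\|B_i\|^2_{H^{s+\a/2}}$, which Step 2 never produced.

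A second problem in (3): a factor with $s'+1$ derivatives on $\barB$ cannot be absorbed. At level $s'$ the dissipation is only $2\mu\|\barB\|^2_{\dot H^{s'+\a/2}}$. The half-derivative gain of Lemma~\ref{lemma:est_hall_term} comes from transport structure and is available only in item (2). For (3), use $\curl\bigl((\curl B_2)\times\barB\bigr)=(\barB\cdot\grad)\curl B_2-(\curl B_2\cdot\grad)\barB$. The second piece is a transport term, handled by a commutator with coefficient $\|B_2\|_{W^{2,\infty}}+\|B_2\|_{H^{s'+1}}$; the first puts no derivative on $\barB$.

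\textbf{What is missing} is a derivative allocation consistent with these two resources. Move the outer curl onto $\barB$ and give $\barB$ exactly $\frac{\a}{2}$ extra derivatives. The cutoff term is then bounded by $C_r\|\barB\|_{W^{1,\infty}}\|B_1\|_{W^{1,\infty}}\|B_1\|_{H^{s'+2-\a/2}}\|\barB\|_{H^{s'+\a/2}}$, and $(\barB\cdot\grad)\curl B_2$ is treated the same way. Young's inequality closes this provided $\|B_i\|_{H^{s'+2-\a/2}}\in L^2_t$, i.e.\ $s'\le s+\a-2$. Together with $s'>\frac52$ this needs $s>\frac92-\a$, and under this allocation the choice $s'=s-\frac12$ works only for $\a\ge\frac32$. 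Your stated range $\frac52<s'\le s-\frac12$ is therefore not the right constraint; state the admissible range of $s'$ explicitly.

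The paper sidesteps $H^{s'+2}$ by exactly this integration by parts, putting $s+\frac12$ derivatives on each factor and bounding both by $H^{s+\a/2}$-norms. Be aware, however, that it then absorbs $\|\barB\|_{H^{s+\a/2}}$ into the dissipation of the $H^{s-\frac12}$ estimate. That dissipation, $2\mu\langle\lb^{\a}\barB,\lb^{2s-1}\barB\rangle$, only provides $\|\barB\|^2_{\dot H^{s-\frac12+\frac{\a}{2}}}$. So copying its allocation does not settle the range $\a<\frac32$ either.
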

\begin{proof} Denote $\barB(t) = B_1 (t) - B_2 (t)$, then $\barB (t)$ is a martingale solution to 
\begin{align*}
d\barB + \Bigl ( \chi_r^2 (\Woneinfnorm{B_1} ) \curl \bigl ((\curl B_1) \times B_1 \bigr )   &- \chi_r^2 (\Woneinfnorm{B_2} ) \curl \bigl ((\curl B_2) \times B_2 \bigr ) + \lb^\a  \barB \Bigr ) dt \\
&=  \haf \sumk \Tt_k^2 \barB dt + \sumk \Tt_k \barB dW^k,
\end{align*}
with initial data $\barB(0) = 0$. It\^o formula and integration by part imply that 
\begin{align*}
d \twonorm{\barB}^2 &= 2 \Ltwoinner{\chi_r^2 (\Woneinfnorm{B_1} ) \curl \bigl ((\curl B_1) \times B_1 \bigr) , \barB} dt \\
&- 2 \Ltwoinner{\chi_r^2 (\Woneinfnorm{B_2} ) \curl \bigl ((\curl B_2) \times B_2 \bigr) , \barB} dt + 2\Ltwoinner{\lb^\a \barB, \barB} dt \\
&+ \sumk \Bigl( \Ltwoinner{\Tt_k^2 \barB, \barB} + \twonorm{\Tt_k \barB}^2 \Bigr) dt + 2 \sumk \Ltwoinner{\Tt_k\barB, \barB} dW^k. \\
\Rightarrow d \twonorm{\barB}^2&= 2 \Ltwoinner{\left( \chi_r^2 (\Woneinfnorm{B_1}) -\chi_r^2 (\Woneinfnorm{B_2}) \right) \curl \bigl ((\curl B_1) \times B_1 \bigr ) , \barB} dt \\
&+ 2 \Ltwoinner{\chi_r^2 (\Woneinfnorm{B_2} ) \bigl( \curl  ((\curl B_1) \times B_1  ) - \curl  ( (\curl B_2) \times B_2 ) ) \bigr) , \barB} dt \\
&+ 2\Ltwoinner{\lb^\a \barB, \barB} dt.
\end{align*}
Note that
\begin{align}
&\curl \left( (\curl B_1) \times B_1 \right) - \curl \left( (\curl B_2) \times B_2 \right)  \notag \\
=\ & \curl \left( (\curl B_1) \times \barB \right) + \curl \left( (\curl B_1) \times B_2 \right)  - \curl \left( (\curl B_2) \times B_2 \right)  \notag \\
=\ & \curl \left(  (\curl B_1) \times \barB \right) + \curl \left( (\curl \barB ) \times B_2 \right).          \label{proof:prop_pathwise_uniqueness_1}
\end{align}
Using integration by part we have
\begin{align*}
&2 \Ltwoinner{\chi_r^2 (\Woneinfnorm{B_2} ) \bigl( \curl ((\curl B_1) \times B_1 ) - \curl ((\curl B_2) \times B_2 ) \bigr) , \barB} dt \\
=\ & 2 \Ltwoinner{\chi_r^2 (\Woneinfnorm{B_2} ) \bigl( \curl  ((\curl B_1) \times \barB ) + \curl ((\curl \barB ) \times B_2 ) \bigr) , \barB} dt \\
=\ & 2 \Ltwoinner{\chi_r^2 (\Woneinfnorm{B_2} ) \curl \bigl ((\curl B_1) \times \barB \bigr) , \barB} dt,\\
=\ & 2 \Ltwoinner{\chi_r^2 (\Woneinfnorm{B_2} ) \bigl( \curl  ((\curl \barB) \times \barB ) + \curl ((\curl B_2) \times \barB ) \bigr) , \barB} dt  \\
=\ & 2 \Ltwoinner{\chi_r^2 (\Woneinfnorm{B_2} ) \curl \bigl ((\curl B_2) \times \barB \bigr) , \barB} dt,
\end{align*}
which results in
\begin{align*}
d \twonorm{\barB}^2 &+ 2 \Ltwoinner{\left( \chi_r^2 (\Woneinfnorm{B_1}) -\chi_r^2 (\Woneinfnorm{B_2}) \right) \curl \bigl ((\curl B_1) \times B_1 \bigr) , \barB} dt \\
&+2 \Ltwoinner{\chi_r^2 (\Woneinfnorm{B_2})\curl \bigl ((\curl B_1) \times \barB \bigr ),\barB} dt + \twonorm{\lb^{\frac{\a}{2}} \barB }^2 dt  = 0.
\end{align*}
We then proceed to estimate the nonlinear terms of the above as the following: 
\begin{align*}
     2 \Ltwoinner{\left( \chi_r^2 (\Woneinfnorm{B_1}) -\chi_r^2 (\Woneinfnorm{B_2}) \right) \curl \bigl ((\curl B_1) \times B_1 \bigr) , \barB} &\les \Woneinfnorm{\barB}^2 \Honenorm{B_1}\twonorm{B_1} \\
     &\les \HsmhNorm{\barB}^2 \Hsnorm{B_1}^2, \\ 
\text{and} \quad 2 \Ltwoinner{\chi_r^2 (\Woneinfnorm{B_2})\curl \bigl ((\curl B_1) \times \barB \bigr ),\barB}  &\leq \Woneinfnorm{\barB} \Honenorm{B_1} \twonorm{\barB}  \\
&\les \HsmhNorm{\barB}^2 \Hsnorm{B_1},
\end{align*}
where we used the fact that $s-\haf > \frac{5}{2}$ together with the Sobolev embedding $H^{s-\haf} \hookrightarrow W^{1,\infty}$. We therefore deduce that 
\begin{equation}\label{proof:prop_pathwise_uniqueness_2}
d \twonorm{\barB}^2 + 2 \twonorm{\lb^{\frac{\a}{2}} \barB}^2 dt \les  \HsmhNorm{\barB}^2 \bigl( 1+ \Hsnorm{B_1}^2 \bigr ) dt.
\end{equation}
Now we estimate the $H^{s-\haf}$-norm of $\barB$. By virtue of It\^o's formula and integration by part, we have 
\begin{align*}
d \twonorm{\lb^{s-\haf} \barB }^2 &+ 2 \Ltwoinner{\lb^\smh  \bigl( \chi^2_r (\Woneinfnorm{B_1}) \curl ((\curl B_1) \times B_1\bigr)    , \lb^\smh \barB } dt \\
&- 2 \Ltwoinner{\lb^\smh \bigl( \chi^2_r (\Woneinfnorm{B_2}) \curl ((\curl B_2) \times B_2)\bigr),\lb^\smh \barB } dt + 2 \mu \Ltwoinner{\lb^\a \barB, \lb^{2s-1} \barB} dt \\
&= \sumk \Bigl( \Ltwoinner{\lb^\smh \Tt_k^2 \barB , \lb^\smh \barB} + \twonorm{\lb^\smh \Tt_k \barB}^2 \Bigr) dt + 2 \sumk \Ltwoinner{\lb^\smh \Tt_k \barB, \lb^s \barB} dW^k.
\end{align*}
The linear term is handled similarly as in \eqref{proof:uniform_energy_estimate_1}, 
\begin{align*}
\sumk \Bigl( \Ltwoinner{\lb^\smh \Tt_k^2 \barB , \lb^\smh \barB} + \twonorm{\lb^\smh \Tt_k \barB}^2 \Bigr) \les \ltwoHssnorm{c_\ell}^2 \HsmhNorm{\barB}^2.
\end{align*}
As for the nonlinear term, we use the identity 
\[
a^2b-c^2d \equiv  (a-c) (ab+cd) + ac(b-d),
\]
to deduce that 
\begin{align*}
  &2 \Ltwoinner{\lb^\smh  \bigl( \chi^2_r (\Woneinfnorm{B_1}) \curl ((\curl B_1) \times B_1) \bigr)   , \lb^\smh \barB }  \\
-\ &2 \Ltwoinner{\lb^\smh \bigl( \chi^2_r (\Woneinfnorm{B_2}) \curl ((\curl B_2) \times B_2) \bigr),\lb^\smh \barB } \\
= \ & 2 \Ltwoinner{\lb^\smh \bigl( \chi_r (\Woneinfnorm{B_1}) - \chi_r(\Woneinfnorm{B_2})\bigr) \chi_r (\Woneinfnorm{B_1}) \curl ( (\curl B_1) \times B_1 ), \lb^\smh \barB } \\
+ \ & 2 \Ltwoinner{\lb^\smh \bigl( \chi_r (\Woneinfnorm{B_1}) - \chi_r(\Woneinfnorm{B_2})\bigr) \chi_r (\Woneinfnorm{B_2}) \curl ( (\curl B_2) \times B_2 ), \lb^\smh \barB } \\
+\ & 2 \Ltwoinner{\lb^\smh \bigl( \chi_r (\Woneinfnorm{B_1}) \chi_r(\Woneinfnorm{B_2}) \curl ( (\curl B_1) \times B_1 ) - \curl ( (\curl B_2) \times B_2 ) \bigr), \lb^\smh \barB }. 
\end{align*}
For the first two terms of the above, we use the Lipschitz continuity of the cut-off function $\chi_r$, Lemma \ref{lemma:commutator_est_0} and Sobolev embedding to obtain
\begin{align*}
 & 2 \Ltwoinner{\lb^\smh \bigl( \chi_r (\Woneinfnorm{B_1}) - \chi_r(\Woneinfnorm{B_2})\bigr) \chi_r (\Woneinfnorm{B_1}) \curl ( (\curl B_1) \times B_1 ), \lb^\smh \barB } \\
=  \ & 2 \Ltwoinner{\lb^\smh \bigl( \chi_r (\Woneinfnorm{B_1}) - \chi_r(\Woneinfnorm{B_2})\bigr) \chi_r (\Woneinfnorm{B_1})( (\curl B_1) \times B_1 ), \lb^\smh \curl \barB }\\
\les \ & \Woneinfnorm{\barB} \chi_r (\Woneinfnorm{B_1})\Ltwoinner{\lb^\smh ( (\curl B_1) \times B_1 ), \lb^\smh \curl \barB } \\
\les \ & \HsmhNorm{\barB} \HsaNorm{\barB}\chi_r (\Woneinfnorm{B_1}) \bigl( \HsaNorm{B_1} \Linfnorm{B_1} + \Woneinfnorm{B_1}\|B_1\|_{H^{s-\haf}(\T^3)} \bigr)\\
\les \ & r \HsmhNorm{\barB} \HsaNorm{\barB} \HsaNorm{B_1} \\
\leq \ & \frac{\mu}{4}\HsaNorm{\barB}^2 + C_r \HsmhNorm{\barB}^2 \HsaNorm{B_1}^2,
\end{align*}
and similarly as 
\begin{align*}
     & 2 \Ltwoinner{\lb^s \bigl( \chi_r (\Woneinfnorm{B_1}) - \chi_r(\Woneinfnorm{B_2})\bigr) \chi_r (\Woneinfnorm{B_2}) \curl ( (\curl B_2) \times B_2 ), \lb^s \barB } \\
    \leq \ & \frac{\mu}{4}\HsaNorm{\barB}^2 + C_r \HsmhNorm{\barB}^2 \HsaNorm{B_2}^2.
\end{align*}
The last inequality from the above follows from Young's inequality. In view of \eqref{proof:prop_pathwise_uniqueness_1} we can write 
\begin{align*}
   & 2 \Ltwoinner{\lb^\smh \bigl( \chi_r (\Woneinfnorm{B_1}) \chi_r(\Woneinfnorm{B_2}) \curl ((\curl B_1) \times B_1 ) - \curl ( (\curl B_2) \times B_2 ) \bigr), \lb^\smh \barB } \\
 \les \ &2 \Ltwoinner{\lb^\smh \left( \chi_r(\Woneinfnorm{B_1}) \chi_r(\Woneinfnorm{B_2}) \left( \curl \left(  (\curl B_1) \times \barB \right) + \curl \left( (\curl \barB ) \times B_2 \right ) \right)  \right) , \lb^\smh \barB } \\
 \les \ & 2 \Ltwoinner{\lb^\smh \left( \chi_r(\Woneinfnorm{B_1}) \chi_r(\Woneinfnorm{B_2}) \left(   (\curl B_1) \times \barB  +  (\curl \barB ) \times B_2 \right ) \right) , \lb^\smh (\curl \barB) } \\
= \ &2 \Ltwoinner{\lb^\smh \left( \chi_r(\Woneinfnorm{B_1}) \chi_r(\Woneinfnorm{B_2})   (\curl B_1) \times \barB   \right) , \lb^\smh (\curl \barB) }\\
+ \ &2 \Ltwoinner{\lb^\smh \left( \chi_r(\Woneinfnorm{B_1}) \chi_r(\Woneinfnorm{B_2}) (\curl \barB ) \times B_2  \right) , \lb^\smh (\curl \barB) } =: N_1 + N_2,
\end{align*}
where $N_1$ can be bounded by 
\begin{align*}
N_1 &\les \HsaNorm{\barB} \left( \HsaNorm{B_1} \Linfnorm{\barB} + \HsNorm{\barB} \Linfnorm{\curl B_1} \right) \\
&\les r \HsaNorm{\barB} \HsmhNorm{\barB} \left( \HsaNorm{B_1} + \Hsnorm{B_1} \right) \\
&\leq \frac{\mu}{4} \HsaNorm{\barB}^2 + C_r \HsmhNorm{\barB}^2 \HsaNorm{B_1}^2.
\end{align*}
To estimate $N_2$, we use the second part of Lemma~\ref{lemma:commutator_est_0}:
\begin{align*}
N_2 &= 2  \chi_r(\Woneinfnorm{B_1}) \chi_r(\Woneinfnorm{B_2}) \Ltwoinner{\lb^\smh \left( (\curl \barB ) \times B_2  \right) ,  \curl (\lb^\smh \barB) } \\
&-2  \chi_r(\Woneinfnorm{B_1}) \chi_r(\Woneinfnorm{B_2}) \Ltwoinner{ (\curl \lb^\smh \barB ) \times B_2   ,  \curl (\lb^\smh \barB) } \\
&\les  \chi_r(\Woneinfnorm{B_1}) \chi_r(\Woneinfnorm{B_2}) \HsaNorm{\barB} \left( \twonorm{\lb^\smh \barB} \Linfnorm{\nabla B_2} + \Linfnorm{\curl \barB} \Hsnorm{B_2} \right) \\
&\les r \HsaNorm{\barB}  \HsmhNorm{\barB}  \Hsnorm{B_2} \\
&\leq \frac{\mu}{4} \HsaNorm{\barB}^2 + C_r \HsmhNorm{\barB}^2 \Hsnorm{B_2}^2.
\end{align*}
Consequently, we have 
\begin{align}
    &2 \Ltwoinner{\lb^\smh \bigl( \chi_r (\Woneinfnorm{B_1}) \chi_r(\Woneinfnorm{B_2}) \curl ( (\curl B_1) \times B_1 ) - \curl ( (\curl B_2) \times B_2 ) \bigr), \lb^\smh \barB} \notag \\
\leq \ & \frac{\mu}{2} \HsaNorm{\barB}^2 + C_r \HsmhNorm{\barB}^2 \bigl( \HsaNorm{B_1}^2 + \HsaNorm{B_2}^2 \bigr), \label{proof:prop_pathwise_uniqueness_3}
\end{align}
and hence 
\begin{align*}
 &2 \Ltwoinner{\lb^\smh  \bigl( \chi^2_r (\Woneinfnorm{B_1}) \curl ((\curl B_1) \times B_1) \bigr)   , \lb^\smh \barB }  \\
-\ &2 \Ltwoinner{\lb^\smh \bigl( \chi^2_r (\Woneinfnorm{B_2}) \curl ((\curl B_2) \times B_2) \bigr),\lb^\smh \barB } \\
\leq \  &\mu \HsaNorm{\barB}^2 + \HsmhNorm{\barB}^2 \bigl( \HsaNorm{B_1}^2 + \HsaNorm{B_2}^2 \bigr). \\
\end{align*}
We then arrive at 
\begin{align}
d \twonorm{\lb^\smh \barB}^2 + \mu \HsaNorm{\barB}^2 dt &\les   \HsmhNorm{\barB}^2 \bigl(\HsaNorm{B_1}^2 + \HsaNorm{B_2}^2 \bigr) dt \notag \\
&+ 2\sumk \Ltwoinner{\lb^s \Tt_k \barB, \lb^s \barB} dW^k.\notag
\end{align}
Taking into account of equation \eqref{proof:prop_pathwise_uniqueness_2} we obtain
\begin{equation}\label{proof:prop_pathwise_uniqueness_4}
d \HsmhNorm{\barB}^2 \leq  C_r \HsmhNorm{\barB}^2 \bigl(1+ \HsaNorm{B_1}^2 + \HsaNorm{B_2}^2 \bigr) dt + 2\sumk \Ltwoinner{\lb^s \Tt_k \barB, \lb^s \barB} dW^k.
\end{equation}
Denote 
\[
U_t := \exp \Bigl \{-C_r \int_0^t \left( 1+ \HsaNorm{B_1}^2 + \HsaNorm{B_2}^2 \right) d\tau \Bigr \}, 
\]
we have by It\^o's formula, 
\[
d \bigl( U_t \HsmhNorm{\barB}^2 \bigr) \leq 2 U_t  \sumk \Ltwoinner{\lb^s \Tt_k \barB, \lb^s \barB} dW^k, \quad \text{or}
\]
\[
U_t \HsmhNorm{\barB}^2  \leq \HsmhNorm{\barB(0)}^2 + 2  \sumk \int_0^t U_\tau \Ltwoinner{\lb^s \Tt_k \barB, \lb^s \barB} dW^k_\tau. 
\]
The stochastic integral on the right hand side of the above is a martingale as $\barB \in L^2 (\tOm; L^2 (0,T;H^{s+\frac{\a}{2}})) \cap L^2(\tOm;L^\infty(0,T;H^s)) $. Taking expectation on both sides brings 
\[
\E \bigl ( U_t \HsmhNorm{\barB}^2 \bigr ) \leq 0,
\]
for $\barB(0) = 0$. By the virtue of $U_t > 0$, we must have $\HsmhNorm{\barB (t)} = 0$ almost surely for all $t \in [0,T]$. This concludes the proof of the pathwise uniqueness.  
\end{proof}

\begin{Remark}
In fact, a weaker condition $s + \frac{\a}{2} \geq \frac{7}{2}$ is sufficient for Proposition~\ref{prop:pathwise_uniqueness}.
\end{Remark}

\subsection{Proof of Theorem \ref{Thm:main}}
Using Propositions \ref{prop:martingale_solution} and \ref{prop:pathwise_uniqueness} as well as the Yamada-Watanabe--type theorem (for the proof, see, for example, \cite[Chapter E]{prevot2007concise}), we can establish a unique pathwise solution 
\[
B \in L^2\bigl( \Om; C([0,T],H^s(\T^3))\cap L^2((0,T);H^{s+1}) \bigr)
\] 
to the system \eqref{eq:EMHD_cutoff_Ito_1}-\eqref{eq:EMHD_cutoff_Ito_3}, for any $T > 0$. Now let $\s_r$ be the stopping time 
\begin{equation}\label{def:stopping_time}
\s_r := \inf \bigl \{ t \geq 0: \Woneinfnorm{B(t)} > \frac{r}{2} \bigr \},
\end{equation}
where $r > 0$ is the same one appears in the cutoff function $\chi_r$. Denote the constant for Sobolev embedding $H^s \subset W^{1,\infty}$ by $C_1$, then $\left( B, \s_r \right)$ is a local pathwise solution of the system \eqref{eq:EMHD_Ito} for each $r \geq 2C_1 \bigl( \Hsnorm{B_0} + 1 \bigr)$.

Now consider the initial conditions $B_0 \in L^2 (\Om; H^s)$. Firstly, we define $B_0^k$ for each integer $k \in \N$ as
\[
B_0^k := B_0 \mathbbm{1}_{ \{ k-1 \leq \Hsnorm{B_0} \leq k \} },
\]
and hence $B_0^k \in L^\infty (\Om;H^s(\T^3))$. Repeating the above argument leads to a sequence of local pathwise solution $\left( B_k , \s_{r_k} \right)$ with $r_k = 2C_1 (k+1)$. Secondly, fix some $L > 1$, we let
\[
B = \sumk B_k \mathbbm{1}_{\{k-1 \leq \Hsnorm{B_0} \leq k\}}, \ \ \ 
\tau = \sumk \tau_k \mathbbm{1}_{\{k-1 \leq \Hsnorm{B_0} \leq k\}},\quad\text{where}
\]
\begin{equation*}
\tau_k := \s_{r_k} \wedge \inf \Bigl \{ t \geq 0: \sup_{t^\ast \in [0,t \wedge \s_{r_k}]} \Hsnorm{B_k(t^\ast)}^2 + \int_0^{t\wedge \s_{r_k}} \HsaNorm{B_k(t^\ast)}^2 dt^\ast \geq L + \Hsnorm{B_0}^2\Bigr \}.
\end{equation*}
It follows from the definition of $\tau_k$ that $\tau > 0$, $\P$-almost surely and it is a stopping time. We hence achieve a local pathwise solution $(B,\tau)$ to \eqref{eq:EMHD_Ito} for initial condition $B_0 \in L^2(\Om;H^s)$. Indeed, we have 
\begin{align*}
    &\E \Bigl( \sup_{t \in [0,\tau]} \Hsnorm{B(t)}^2 + \int_0^\tau \HsaNorm{B(t)}^2 dt \Bigr) \\
    =\ &\E \sumk \mathbbm{1}_{\Om_k} \Bigl( \sup_{t \in [0,\tau]} \Hsnorm{B_k(t)}^2 + \int_0^\tau \HsaNorm{B_k(t)}^2 dt \Bigr) \leq \E \sumk \mathbbm{1}_{\Om_k} \bigl( L + \Hsnorm{B_0}^2 \bigr) \\
    \leq\ &L + \E \Hsnorm{B_0}^2 < \infty.
\end{align*}

 Next, denote the collection $\Gm$ containing all stopping time corresponding to a local pathwise solution with the initial condition $B_0$. Then there exists a stopping time $\xi$ such that $\xi \geq \s \in \Gm$ (see \cite{Doob94} Chapter V, section 18 ). Furthermore, we can find a sequence of stopping time $\left\{\s_n \right\}_{n \geq 1} \subset \Gm$ such that $\s_n \leq \s_{n+1}$  and
\[
\lim_{n \to \infty } \s_n = \xi,
\]
almost surely. Let $(B_n,\s_n)$ be the local pathwise solutions for each $n \in \N$. Finally, we conclude that the solution $(B,(\s_n)_{n \in \N},\xi)$ given by
\[
B(\om,t,x) = \lim_{n \to \infty} B_n  (\om, t \wedge \s_n) \mathbbm{1}_{[0,\xi)]}(\om)
\]
is the maximal pathwise solution to the original system. The detail of the argument can be found in \cite[Section 3.4]{BS21}. \qed

\section{Conclusion and discussion}\label{Sec:conclusion}
In this paper, we established the local pathwise well-posedness of the three-dimensional stochastic EMHD system driven by multiplicative transport noise on the torus with fractional dissipation. The main analytical challenge lies in the interplay between the derivative-intensive Hall nonlinearity and the stochastic transport operators, particularly in the regime $\alpha<2$, where the dissipation is not sufficiently strong to directly compensate for the loss of derivatives. To address this difficulty, we developed a cutoff approximation framework together with refined high-order Sobolev energy estimates based on Littlewood--Paley analysis and commutator estimates. This approach yields the existence of martingale solutions, pathwise uniqueness, and, via the Yamada--Watanabe--type theorem, the existence of unique maximal pathwise solutions. Several directions remain open for future study, including global well-posedness under additional assumptions, the long-time behavior of solutions, and possible regularizing effects induced by the transport noise. More broadly, the analytical framework developed here may also be applicable to other stochastic fluid models with derivative-loss nonlinearities and transport-type noise.

\section*{Acknowledgments} 
This work was partially supported by the ONR grant under \#N00014-24-1-2432, the
Simons Foundation (MP-TSM-00002783) and the NSF grant DMS-2420988.

\bibliographystyle{plain}
\bibliography{EMHD_transport_noise}

@book{da2014stochastic,
  title={Stochastic equations in infinite dimensions},
  author={Da Prato, Giuseppe and Zabczyk, Jerzy},
  volume={152},
  year={2014},
  publisher={Cambridge university press}
}

@article {HLL26,
    AUTHOR = {Hu, Ruimeng and Lin, Quyuan and Liu, Rongchang},
     TITLE = {On the local well-posedness of fractionally dissipated
              primitive equations with transport noise},
   JOURNAL = {J. Differential Equations},
  FJOURNAL = {Journal of Differential Equations},
    VOLUME = {450},
      YEAR = {2026},
     PAGES = {113729},
      ISSN = {0022-0396,1090-2732},
   MRCLASS = {76M35 (35Q35 35Q86 35R11 60H15 86A10)},
  MRNUMBER = {4951174},
       DOI = {10.1016/j.jde.2025.113729},
       URL = {https://doi.org/10.1016/j.jde.2025.113729},
}

@incollection{zhang2017backward,
  title={Backward stochastic differential equations},
  author={Zhang, Jianfeng},
  booktitle={Backward Stochastic Differential Equations: From Linear to Fully Nonlinear Theory},
  pages={79--99},
  year={2017},
  publisher={Springer}
}

@article {FG95,
    AUTHOR = {Flandoli, Franco and Gatarek, Dariusz},
     TITLE = {Martingale and stationary solutions for stochastic
              {N}avier-{S}tokes equations},
   JOURNAL = {Probab. Theory Related Fields},
  FJOURNAL = {Probability Theory and Related Fields},
    VOLUME = {102},
      YEAR = {1995},
    NUMBER = {3},
     PAGES = {367--391},
      ISSN = {0178-8051,1432-2064},
   MRCLASS = {60H15 (35Q30 35R60 76D05 76M35)},
  MRNUMBER = {1339739},
MRREVIEWER = {Marek\ Capi\'nski},
       DOI = {10.1007/BF01192467},
       URL = {https://doi-org.proxy.library.ucsb.edu/10.1007/BF01192467},
}

@article {CTV15,
    AUTHOR = {Constantin, Peter and Tarfulea, Andrei and Vicol, Vlad},
     TITLE = {Long time dynamics of forced critical {SQG}},
   JOURNAL = {Comm. Math. Phys.},
  FJOURNAL = {Communications in Mathematical Physics},
    VOLUME = {335},
      YEAR = {2015},
    NUMBER = {1},
     PAGES = {93--141},
      ISSN = {0010-3616,1432-0916},
   MRCLASS = {35Q86 (35B41)},
  MRNUMBER = {3314501},
MRREVIEWER = {Elisabetta\ Rocca},
       DOI = {10.1007/s00220-014-2129-3},
       URL = {https://doi-org.proxy.library.ucsb.edu/10.1007/s00220-014-2129-3},
}

@article {CWW15,
    AUTHOR = {Chae, Dongho and Wan, Renhui and Wu, Jiahong},
     TITLE = {Local well-posedness for the {H}all-{MHD} equations with
              fractional magnetic diffusion},
   JOURNAL = {J. Math. Fluid Mech.},
  FJOURNAL = {Journal of Mathematical Fluid Mechanics},
    VOLUME = {17},
      YEAR = {2015},
    NUMBER = {4},
     PAGES = {627--638},
      ISSN = {1422-6928,1422-6952},
   MRCLASS = {35Q35 (35B35 35B65 35R11 76W05)},
  MRNUMBER = {3412271},
       DOI = {10.1007/s00021-015-0222-9},
       URL = {https://doi.org/10.1007/s00021-015-0222-9},
}

@article {Dai21,
    AUTHOR = {Dai, Mimi},
     TITLE = {Local well-posedness for the {H}all-{MHD} system in optimal
              {S}obolev spaces},
   JOURNAL = {J. Differential Equations},
  FJOURNAL = {Journal of Differential Equations},
    VOLUME = {289},
      YEAR = {2021},
     PAGES = {159--181},
      ISSN = {0022-0396,1090-2732},
   MRCLASS = {76D03 (35Q35 76W05)},
  MRNUMBER = {4248458},
MRREVIEWER = {Abhik\ Kumar\ Sanyal},
       DOI = {10.1016/j.jde.2021.04.019},
       URL = {https://doi.org/10.1016/j.jde.2021.04.019},
}

@book{prevot2007concise,
  title={A concise course on stochastic partial differential equations},
  author={Pr{\'e}v{\^o}t, Claudia and R{\"o}ckner, Michael},
  year={2007},
  publisher={Springer}
}

@article {ANR11,
    AUTHOR = {Debussche, Arnaud and Glatt-Holtz, Nathan and Temam, Roger},
     TITLE = {Local martingale and pathwise solutions for an abstract fluids
              model},
   JOURNAL = {Phys. D},
  FJOURNAL = {Physica D. Nonlinear Phenomena},
    VOLUME = {240},
      YEAR = {2011},
    NUMBER = {14-15},
     PAGES = {1123--1144},
      ISSN = {0167-2789,1872-8022},
   MRCLASS = {60H15 (60H30)},
  MRNUMBER = {2812364},
MRREVIEWER = {Abdulrahman\ S.\ Al-Hussein},
       DOI = {10.1016/j.physd.2011.03.009},
       URL = {https://doi-org.proxy.library.ucsb.edu/10.1016/j.physd.2011.03.009},
}

@book {Doob94,
    AUTHOR = {Doob, J. L.},
     TITLE = {Measure theory},
    SERIES = {Graduate Texts in Mathematics},
    VOLUME = {143},
 PUBLISHER = {Springer-Verlag, New York},
      YEAR = {1994},
     PAGES = {xii+210},
      ISBN = {0-387-94055-3},
   MRCLASS = {28-01 (60A10)},
  MRNUMBER = {1253752},
MRREVIEWER = {S.\ D.\ Chatterji},
       DOI = {10.1007/978-1-4612-0877-8},
       URL = {https://doi.org/10.1007/978-1-4612-0877-8},
}

@article {Dai16,
    AUTHOR = {Dai, Mimi},
     TITLE = {Regularity criterion for the 3{D}
              {H}all-magneto-hydrodynamics},
   JOURNAL = {J. Differential Equations},
  FJOURNAL = {Journal of Differential Equations},
    VOLUME = {261},
      YEAR = {2016},
    NUMBER = {1},
     PAGES = {573--591},
      ISSN = {0022-0396,1090-2732},
   MRCLASS = {76D03 (35B65 35Q35 76W05)},
  MRNUMBER = {3487269},
MRREVIEWER = {Anthony\ Suen},
       DOI = {10.1016/j.jde.2016.03.019},
       URL = {https://doi.org/10.1016/j.jde.2016.03.019},
}

@article {WZ17,
    AUTHOR = {Wan, Renhui and Zhou, Yong},
     TITLE = {Low regularity well-posedness for the 3{D} generalized
              {H}all-{MHD} system},
   JOURNAL = {Acta Appl. Math.},
  FJOURNAL = {Acta Applicandae Mathematicae},
    VOLUME = {147},
      YEAR = {2017},
     PAGES = {95--111},
      ISSN = {0167-8019,1572-9036},
   MRCLASS = {35Q35 (35B30 35R11 76W05)},
  MRNUMBER = {3592797},
       DOI = {10.1007/s10440-016-0070-5},
       URL = {https://doi.org/10.1007/s10440-016-0070-5},
}

@book{bahouri2011fourier,
  title={Fourier analysis and nonlinear partial differential equations},
  author={Bahouri, Hajer},
  year={2011},
  publisher={Springer}
}

@book{grafakos2008classical,
  title={Classical fourier analysis},
  author={Grafakos, Loukas and others},
  volume={2},
  year={2008},
  publisher={Springer}
}

@article {AHHS25,
    AUTHOR = {Agresti, Antonio and Hieber, Matthias and Hussein, Amru and
              Saal, Martin},
     TITLE = {The stochastic primitive equations with nonisothermal
              turbulent pressure},
   JOURNAL = {Ann. Appl. Probab.},
  FJOURNAL = {The Annals of Applied Probability},
    VOLUME = {35},
      YEAR = {2025},
    NUMBER = {1},
     PAGES = {635--700},
      ISSN = {1050-5164,2168-8737},
   MRCLASS = {35Q86 (35Q35 35R60 60H15 76M35 76U60)},
  MRNUMBER = {4871718},
MRREVIEWER = {Adrian\ Muntean},
       DOI = {10.1214/24-aap2124},
       URL = {https://doi.org/10.1214/24-aap2124},
}

@article {BS21,
    AUTHOR = {Brzezniak, Zdzislaw and Slavik, Jakub},
     TITLE = {Well-posedness of the 3{D} stochastic primitive equations with
              multiplicative and transport noise},
   JOURNAL = {J. Differential Equations},
  FJOURNAL = {Journal of Differential Equations},
    VOLUME = {296},
      YEAR = {2021},
     PAGES = {617--676},
      ISSN = {0022-0396,1090-2732},
   MRCLASS = {35Q86 (60H15 76M35)},
  MRNUMBER = {4275634},
       DOI = {10.1016/j.jde.2021.05.049},
       URL = {https://doi.org/10.1016/j.jde.2021.05.049},
}

@article {CDL14,
    AUTHOR = {Chae, Dongho and Degond, Pierre and Liu, Jian-Guo},
     TITLE = {Well-posedness for {H}all-magnetohydrodynamics},
   JOURNAL = {Ann. Inst. H. Poincar\'e{} C Anal. Non Lin\'eaire},
  FJOURNAL = {Annales de l'Institut Henri Poincar\'e{} C. Analyse Non
              Lin\'eaire},
    VOLUME = {31},
      YEAR = {2014},
    NUMBER = {3},
     PAGES = {555--565},
      ISSN = {0294-1449,1873-1430},
   MRCLASS = {35Q35 (35B30 35B53 35L60 76W05)},
  MRNUMBER = {3208454},
MRREVIEWER = {Iuliana\ Oprea},
       DOI = {10.1016/j.anihpc.2013.04.006},
       URL = {https://doi-org.proxy.library.ucsb.edu/10.1016/j.anihpc.2013.04.006},
}

@article{kraichnan1968small,
  title={Small-scale structure of a scalar field convected by turbulence},
  author={Kraichnan, Robert H},
  journal={The Physics of Fluids},
  volume={11},
  number={5},
  pages={945--953},
  year={1968},
  publisher={American Institute of Physics}
}

@article{kraichnan1994anomalous,
  title={Anomalous scaling of a randomly advected passive scalar},
  author={Kraichnan, Robert H},
  journal={Physical review letters},
  volume={72},
  number={7},
  pages={1016},
  year={1994},
  publisher={APS}
}

@incollection{mikulevicius2001equations,
  title={On equations of stochastic fluid mechanics},
  author={Mikulevicius, Remigijus and Rozovskii, B},
  booktitle={Stochastics in Finite and Infinite Dimensions: In Honor of Gopinath Kallianpur},
  pages={285--302},
  year={2001},
  publisher={Springer}
}

@article {MR04,
    AUTHOR = {Mikulevicius, R. and Rozovskii, B. L.},
     TITLE = {Stochastic {N}avier-{S}tokes equations for turbulent flows},
   JOURNAL = {SIAM J. Math. Anal.},
  FJOURNAL = {SIAM Journal on Mathematical Analysis},
    VOLUME = {35},
      YEAR = {2004},
    NUMBER = {5},
     PAGES = {1250--1310},
      ISSN = {0036-1410,1095-7154},
   MRCLASS = {60H15 (35Q30 35R60 76D05 76F02 76M35)},
  MRNUMBER = {2050201},
MRREVIEWER = {Anna\ Karczewska},
       DOI = {10.1137/S0036141002409167},
       URL = {https://doi-org.proxy.library.ucsb.edu/10.1137/S0036141002409167},
}

@article{agresti2023primitive,
  title={The primitive equations with rough transport noise: Global well-posedness and regularity},
  author={Agresti, Antonio},
  journal={arXiv preprint arXiv:2310.01193},
  year={2023}
}

@article{hu2023local,
  title={Local martingale solutions and pathwise uniqueness for the three-dimensional stochastic inviscid primitive equations},
  author={Hu, Ruimeng and Lin, Quyuan},
  journal={Stochastics and Partial Differential Equations: Analysis and Computations},
  volume={11},
  number={4},
  pages={1470--1518},
  year={2023},
  publisher={Springer}
}

@article{hu2023pathwise,
  title={Pathwise solutions for stochastic hydrostatic Euler equations and hydrostatic Navier-Stokes equations under the local Rayleigh condition},
  author={Hu, Ruimeng and Lin, Quyuan},
  journal={arXiv preprint arXiv:2301.07810},
  year={2023}
}

@article{hu2025regularization,
  title={Regularization by noise for the inviscid primitive equations},
  author={Hu, Ruimeng and Lin, Quyuan and Liu, Rongchang},
  journal={Journal of Nonlinear Science},
  volume={35},
  number={4},
  pages={84},
  year={2025},
  publisher={Springer}
}

@article{abdo2023long,
  title={On the long-time dynamics and ergodicity of the stochastic Nernst-Planck-Navier-Stokes system},
  author={Abdo, Elie and Hu, Ruimeng and Lin, Quyuan},
  journal={arXiv preprint arXiv:2310.20484},
  year={2023}
}

@article{hu2025well,
  title={Well-posedness of the Electron MHD with random diffusion},
  author={Hu, Ruimeng and Peng, Qirui and Yang, Xu},
  journal={arXiv e-prints},
  pages={arXiv--2509},
  year={2025}
}

\appendix 

\section{functional analysis}\label{appendix:functional_analysis}
Let $X$ be a Banach space, we first recall here the definition of the function space $W^{\a,p}(0,T;X)$ :
\begin{equation}\label{def:Wap_space}
W^{\a,p}(0,T;X) := \Bigl \{ f \in L^p(0,T;X) : \int_0^T \int_0^T \frac{\|f(t)-f(s) \|^p_X}{|t-s|^{1+\a p}}dsdt < \infty   \Bigr \},
\end{equation}
with the norm given by: 
\begin{equation}\label{def:Wap_space_norm}
\|f\|^p_{W^{\a,p}(0,T;X)} := \int_0^T \|f(t)\|^p_X dt +\int_0^T \int_0^T \frac{\|f(t)-f(s) \|^p_X}{|t-s|^{1+\a p}}dsdt. 
\end{equation}
\begin{Theorem}\label{Thm:compactness_thm_1}
Let $X \subset Y \subset Z$ be Banach spaces such that $X$ and $Z$ are reflexive and the embedding $X \subset \subset Y$ is compact and $Y \hookrightarrow Z$ is continuous. Then for $p \in (1,\infty)$ and $\gm \in (0,1]$, we have the following compact embedding:
\[
L^p(0,T;X) \cap W^{\gm,p}(0,T;Z) \subset \subset L^p (0,T;Y).
\]
\end{Theorem}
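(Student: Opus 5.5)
This is the Aubin--Lions--Simon compactness theorem; I would give the standard proof, in two steps.

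\textbf{Step 1 (Ehrling--Lions interpolation).} Since $X \subset\subset Y \hookrightarrow Z$, for every $\eta>0$ there is a constant $C_\eta$ with
\[
\|u\|_Y \le \eta \|u\|_X + C_\eta \|u\|_Z, \qquad u\in X .
\]
I would prove this by contradiction. If it failed, there would be $u_n$ with $\|u_n\|_X=1$ and $\|u_n\|_Y>\eta+n\|u_n\|_Z$. By compactness, a subsequence satisfies $u_n\to u$ in $Y$, and hence in $Z$. Since $\|u_n\|_Y$ is bounded, $\|u_n\|_Z\to0$, so $u=0$. This contradicts $\|u_n\|_Y>\eta$. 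Integrating the inequality in time gives, for any $f,g$ in $L^p(0,T;X)$,
\[
\|f-g\|_{L^p(Y)}\le \eta\|f-g\|_{L^p(X)}+C_\eta\|f-g\|_{L^p(Z)} .
\]
It therefore suffices to prove that a set $F$ bounded in $L^p(0,T;X)\cap W^{\gm,p}(0,T;Z)$ is relatively compact in $L^p(0,T;Z)$. Indeed, a sequence that is Cauchy in $L^p(Z)$ and bounded in $L^p(X)$ is Cauchy in $L^p(Y)$: choose $\eta$ small to make the first term small, then use the Cauchy property for the second.

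\textbf{Step 2 (compactness in $L^p(0,T;Z)$).} I would use Simon's criterion: $F\subset L^p(0,T;Z)$ is relatively compact provided
\begin{enumerate}[label=(\roman*)]
\item for all $0<t_1<t_2<T$ the set $\{\int_{t_1}^{t_2} f\,dt: f\in F\}$ is relatively compact in $Z$;
\item $\|\tau_h f-f\|_{L^p(0,T-h;Z)}\to0$ as $h\to0$, uniformly in $f\in F$.
\end{enumerate}
Condition (i) follows because $\int_{t_1}^{t_2}f$ is bounded in $X$, by H\"older, and $X$ embeds compactly into $Z$.

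For condition (ii), the key is the fractional bound
\[
\int_0^{T-h}\|f(t+h)-f(t)\|_Z^p\,dt\le C h^{\gm p}\,\|f\|^p_{W^{\gm,p}(0,T;Z)} .
\]
I would prove it by the usual averaging trick. For $r\in(0,h)$, write $f(t+h)-f(t)=[f(t+h)-f(t+r)]+[f(t+r)-f(t)]$. Average over $r\in(0,h)$, take $p$-th powers and integrate in $t$. Changing variables so that each difference is $f(\sigma)-f(\tau)$ with $|\sigma-\tau|\le h$, each piece is bounded by $h^{-1}\iint_{|\sigma-\tau|\le h}\|f(\sigma)-f(\tau)\|_Z^p$. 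Using $|\sigma-\tau|^{1+\gm p}\le h^{1+\gm p}$, this is at most $h^{\gm p}$ times the Gagliardo seminorm.

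The main obstacle is justifying Simon's criterion itself. If I cannot cite it, I would prove it directly. Set $f_h(t):=\frac1h\int_t^{t+h}f$. By (ii), $\|f_h-f\|_{L^p(Z)}\to0$ uniformly on $F$; the endpoint layer near $T$ is controlled by the uniform $L^p(0,T;Z)$ bound, using absolute continuity. For fixed $h$, the family $\{f_h\}$ is equicontinuous in $t$ with values in $Z$, by H\"older and the $L^p$ bound. It is also pointwise relatively compact in $Z$ by (i). Arzel\`a--Ascoli then makes $\{f_h\}$ relatively compact in $C([0,T-h];Z)$, hence totally bounded in $L^p$. Combining the uniform approximation with total boundedness shows that $F$ is totally bounded in $L^p(0,T;Z)$.

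Reflexivity of $X$ and $Z$ is not essential here. It would be used only to extract weakly convergent subsequences if one preferred a weak-compactness proof of Step 1. Note also that $\gm\in(0,1]$ and $p\in(1,\infty)$ enter only through the $h^{\gm p}$ translation estimate.
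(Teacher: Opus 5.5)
Your argument is correct and takes a genuinely different route from the paper, which gives no proof and simply defers to \cite[Theorem 2.1]{FG95}.

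You reduce the claim, via the Ehrling inequality $\|u\|_Y\le\eta\|u\|_X+C_\eta\|u\|_Z$, to relative compactness in $L^p(0,T;Z)$. You then obtain that from Simon's characterization. The averages $\int_{t_1}^{t_2}f$ are bounded in $X$, hence precompact in $Z$. The averaged-difference trick gives $\int_0^{T-h}\|f(t+h)-f(t)\|_Z^p\,dt\le 2^p h^{\gm p}\|f\|^p_{W^{\gm,p}(0,T;Z)}$, uniformly on bounded sets.

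The citation is shorter. Your route is self-contained and treats all $\gm\in(0,1]$ at once. It makes clear that $\gm$ and $p$ enter only through the factor $h^{\gm p}$. It also shows that the reflexivity of $X$ and $Z$ assumed in the statement is superfluous; this is essentially Simon's 1987 compactness theorem, which needs no reflexivity.

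If you cite Simon's criterion, nothing is missing. In your fallback proof of that criterion, however, one step is not justified as written. To upgrade $\|f_h-f\|_{L^p(0,T-h;Z)}\to0$ to an approximation on all of $(0,T)$, you need $\sup_{f\in F}\|f\|_{L^p(T-h,T;Z)}\to0$. A uniform $L^p(0,T;Z)$ bound plus absolute continuity of the integral only gives this for each $f$ individually, not uniformly over $F$.

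This is easily repaired. Condition (ii) is invariant under $f\mapsto f(T-\cdot)$. So the backward averages $\frac1h\int_{t-h}^t f$ give relative compactness in $L^p(h,T;Z)$ by the same argument. For $h<T/2$, this combines with compactness in $L^p(0,T-h;Z)$ to give compactness on $(0,T)$.

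Alternatively, write $f(t)=f(t-h)+(f(t)-f(t-h))$ and average over $h\in(H,2H)$. This gives
\[
\int_{T-\delta}^{T}\|f\|_Z^p\,dt\le 2^{p-1}\Bigl(\frac{\delta}{H}\|f\|^p_{L^p(0,T;Z)}+\sup_{0<h\le 2H}\|\tau_hf-f\|^p_{L^p(0,T-h;Z)}\Bigr),
\]
which is uniformly small on $F$ after choosing first $H$ and then $\delta$. This is where the uniform $L^p$ bound genuinely enters.
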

\begin{proof}
See \cite[Theorem 2.1]{FG95}.
\end{proof}
\begin{Theorem}\label{Thm:compactness_thm_2}
Let $X \subset \subset Y$ be two Banach spaces such that the embedding is compact. For $\gm \in \left(0,1\right]$ and $p \in (1,\infty)$ satisfying 
\[
\gm p > 1,
\]
we have the following compact embedding:
\[
W^{\gm,p}(0,T;X) \subset \subset C (0,T;Y).
\]
\end{Theorem}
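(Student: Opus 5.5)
We prove Theorem \ref{Thm:compactness_thm_2} with a vector-valued fractional Sobolev--Morrey embedding followed by an Arzel\`a--Ascoli argument. The plan has three steps.

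\textbf{Step 1 (uniform H\"older bound).} We first show that $W^{\gm,p}(0,T;X)$ embeds continuously into $C^{0,\theta}([0,T];X)$ for every $\theta\in(0,\gm-\frac1p)$. Concretely, for each $f\in W^{\gm,p}(0,T;X)$ (in its continuous representative) we aim for
\[
\|f(t)-f(s)\|_X \les |t-s|^{\gm-\frac1p}\Bigl(\int_0^T\int_0^T \frac{\|f(t')-f(s')\|_X^p}{|t'-s'|^{1+\gm p}}\,ds'\,dt'\Bigr)^{\frac1p},
\]
together with
\[
\sup_{t\in[0,T]}\|f(t)\|_X \les \|f\|_{W^{\gm,p}(0,T;X)}.
\]
The argument is the Garsia--Rodemich--Rumsey lemma, or equivalently the Campanato iteration. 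Fix $t$ and write $f_I$ for the average of $f$ over an interval $I\ni t$. Comparing averages over the dyadic intervals $I_k\ni t$ of length $2^{-k}|t-s|$ gives
\[
\|f_{I_k}-f_{I_{k+1}}\|_X \le \fint_{I_k}\fint_{I_k}\|f(a)-f(b)\|_X\,da\,db .
\]
By H\"older's inequality this is at most $|I_k|^{\gm-\frac1p}$ times the Gagliardo seminorm. The exponents $\gm-\frac1p$ form a geometric series that converges precisely because $\gm p>1$; summing over $k$ yields the modulus of continuity. The sup bound then follows by comparing $f(t)$ with the global average $\fint_0^T f$. All steps use only the norm of $X$, so no reflexivity or other structure of $X$ is needed. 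This step is where the hypothesis $\gm p>1$ enters, and it is the main technical point: the chaining must be done carefully so that the constant is uniform in $t$, $s$ and $f$.

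\textbf{Step 2 (equicontinuity and pointwise relative compactness).} Let $\mathcal B$ be a bounded set in $W^{\gm,p}(0,T;X)$. By Step 1, $\mathcal B$ is bounded in $C([0,T];X)$ and uniformly H\"older continuous with values in $X$. Since the embedding $X\hookrightarrow Y$ is continuous (being compact), $\mathcal B$ is also equicontinuous as a family in $C([0,T];Y)$. Moreover, for each fixed $t$ the set $\{f(t): f\in\mathcal B\}$ is bounded in $X$, hence relatively compact in $Y$ by the compactness of $X\subset\subset Y$.

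\textbf{Step 3 (conclusion).} The vector-valued Arzel\`a--Ascoli theorem, which requires exactly equicontinuity and pointwise relative compactness, shows that $\mathcal B$ is relatively compact in $C([0,T];Y)$. This proves the compact embedding $W^{\gm,p}(0,T;X)\subset\subset C(0,T;Y)$.

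One remaining detail is to verify that every element of $W^{\gm,p}(0,T;X)$ has a continuous representative. This follows because the estimate of Step 1 holds at all Lebesgue points of $f$, and the resulting H\"older continuous function extends continuously to all of $[0,T]$.
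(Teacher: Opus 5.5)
Your proposal is correct and follows the standard argument behind the result the paper simply cites (Flandoli--Gatarek \cite{FG95}, Theorem 2.2): the embedding $W^{\gamma,p}(0,T;X)\hookrightarrow C^{0,\gamma-1/p}([0,T];X)$ that follows from $\gamma p>1$, then the vector-valued Arzel\`a--Ascoli theorem, with pointwise relative compactness supplied by $X\subset\subset Y$. The only caveat is the endpoint $\gamma=1$: if $W^{1,p}$ is read as the classical Sobolev space (as the paper does when it uses $W^{1,2}$ in Proposition~\ref{prop:uni_energy_estimate}), replace Step 1 by the bound $\|f(t)-f(s)\|_X\le |t-s|^{1-1/p}\|f'\|_{L^p(0,T;X)}$, because the Gagliardo double integral with exponent $1+p$ is finite only for constant functions.
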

\begin{proof}
See \cite[Theorem 2.2]{FG95}. 
\end{proof}

\section{Tools from Stochastic Calculus}\label{appendix:stochastic_calculus}
\smallskip
\begin{Lemma}\label{lemma:Ito_formula_EMHD}
Let $p \geq 2$ and $B_n$ be the solution to the Galerkin system \eqref{eq:EMHD_Ito_Galerkin}, then it holds that \\
\begin{align}
d \twonorm{\lb^s B_n}^p &= -p \left< \chir^2 \Pp_n \left( \curl \left[ (\curl B_n) \times B_n \right ] + \mu \lb^\a B_n \right), \lb^{2s}B_n \right> \twonorm{\lb^s B_n}^{p-2} dt \notag \\
&+ \frac{p}{2} \sum_{k =1}^\infty \bigl ( \<\Pp_n \Tt^2_k B_n ,\lb^{2s} B_n\> + \twonorm{\lb^s \Pp_n \Tt_k B_n}^2 \bigr) \twonorm{\lb^s B_n}^{p-2} dt \notag \\
&+\haf p (p-2) \twonorm{\lb^s B_n}^{p-4} \sum_{k=1}^\infty \< \Pp_n \Tt_k B_n, \lb^{2s} B_n  \>^2 dt \notag \\
&+ p \twonorm{\lb^s B_n}^{p-2} \sum_{k =1 }^\infty \< \Pp_n \Tt_k B_n, \lb^{2s} B_n \> dW^k.  \label{}
\end{align}
\end{Lemma}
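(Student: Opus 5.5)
The plan is to treat the statement as a direct application of the finite-dimensional It\^o formula. For fixed $n$, the Galerkin system \eqref{eq:EMHD_Ito_Galerkin} is an SDE on the finite-dimensional space $H_n := \Pp_n H$, which carries the inner product $\<\cdot,\cdot\>$. Its drift is
\[
b(B_n) = -\chi_r^2 \Pp_n \curl\bigl((\curl B_n)\times B_n\bigr) - \mu \lb^\a B_n + \haf \sumk \Pp_n \Tt_k^2 B_n ,
\]
and its diffusion coefficients are $\sigma_k(B_n) = \Pp_n \Tt_k B_n$. We apply It\^o's formula to the functional
\[
F(x) = \twonorm{\lb^s x}^p = \bigl(\<\lb^{2s}x, x\>\bigr)^{p/2}, \qquad x \in H_n .
\]
Before doing so we record that the noise series converges. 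On $H_n$ all norms are equivalent, so $\|\Pp_n\Tt_k x\|_{L^2} \les_n \Linfnorm{c_k}\twonorm{x}$. Together with \eqref{eq:noise_assumption} and $H^{s+1}\hookrightarrow L^\infty$, this gives $\sumk \|\sigma_k(x)\|^2 <\infty$ and $\sumk \|\Pp_n \Tt_k^2 x\| < \infty$, locally uniformly in $x$. Hence the infinite-dimensional noise $\sumk \sigma_k\, dW^k$ is a legitimate $H_n$-valued It\^o integral against the cylindrical process $\W$.

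The next step is to compute the first and second Fr\'echet derivatives of $F$:
\[
DF(x)h = p\,\twonorm{\lb^s x}^{p-2}\<\lb^{2s}x,h\>,
\]
\[
D^2F(x)(h,h) = p\,\twonorm{\lb^s x}^{p-2}\twonorm{\lb^s h}^2 + p(p-2)\twonorm{\lb^s x}^{p-4}\<\lb^{2s}x,h\>^2 .
\]
It\^o's formula then gives
\[
dF(B_n) = DF(B_n)\,b(B_n)\,dt + \haf \sumk D^2F(B_n)(\sigma_k,\sigma_k)\,dt + \sumk DF(B_n)\,\sigma_k \,dW^k .
\]
Substituting the derivatives, the terms match the claimed identity as follows. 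The Hall and dissipative parts of the drift give the first line. The It\^o-correction drift $\haf\sumk \Pp_n\Tt_k^2 B_n$ contributes $\tfrac p2 \<\Pp_n\Tt_k^2B_n,\lb^{2s}B_n\>\twonorm{\lb^sB_n}^{p-2}$. The first part of the Hessian contributes $\tfrac p2\twonorm{\lb^s\Pp_n\Tt_kB_n}^2\twonorm{\lb^sB_n}^{p-2}$. Together these two form the second line. The second part of the Hessian yields the $\haf p(p-2)$ line, and $DF\,\sigma_k$ gives the martingale term.

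The main obstacle is the regularity of $F$ at the origin when $2<p<4$, where the factor $\twonorm{\lb^s x}^{p-4}$ blows up. The resolution is to observe that $|\<\lb^{2s}x,h\>|^2 \le \twonorm{\lb^s x}^2\twonorm{\lb^s h}^2$. Consequently the second Hessian term is bounded by $p(p-2)\twonorm{\lb^s x}^{p-2}\twonorm{\lb^s h}^2$, which tends to $0$ as $x\to 0$. So $D^2F$ extends continuously by $0$ at the origin and $F\in C^2(H_n)$; the case $p=2$ is trivial.

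For a cleaner argument, I would instead apply It\^o's formula to the smooth regularization $F_\e(x) = (\e + \twonorm{\lb^s x}^2)^{p/2}$ and let $\e\to 0$. The drift integrands converge pointwise and are dominated by $C(1+\twonorm{\lb^s B_n}^{p})$ on $[0,T]$ thanks to the bound above, which justifies the limit by dominated convergence. The stochastic integrals converge in probability by the It\^o isometry applied up to a localizing stopping time. The continuity of paths of $B_n$ (finite-dimensional SDE with locally Lipschitz coefficients) makes this localization harmless.
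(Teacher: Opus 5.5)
Your proposal is correct and follows the same route as the paper. Like the paper, you apply the finite-dimensional It\^o formula to $\twonorm{\lb^s x}^p$, compute $DF$ and $D^2F$, and match the drift, It\^o-correction, Hessian and martingale terms; your checks on the $C^2$ regularity of $F$ for $2<p<4$ and on summability of the noise series fill in details the paper omits, and there is one harmless slip: $\Pp_n \Tt_k B_n$ is in general not divergence-free, so the Galerkin SDE should be posed on $\Pp_n L^2$ rather than $\Pp_n H$, which changes nothing in the computation.
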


\begin{proof} By It\^{o} formula (see for example \cite[Theorem 4.32]{da2014stochastic}), we have that
\begin{align}
    dF(\lb^s B_n(t)) &= \sum_{k = 1}^\infty \< DF (\lb^s B_n(t)), \lb^s \Pp_n \Tt_k B_n dW^k \> + \<DF (\lb^s B_n(t)), \lb^s \phi(t) \> dt \notag \\
    &+ \haf \sum_{k=1}^\infty \tr \bigl ( D^2 F(\lb^s B_n(t)) (\lb^s \Pp_n \Tt_k B_n) (\lb^s \Pp_n \Tt_k B_n)^T \bigr )dt,  \label{lemma:Ito_formula_EMHD_1}
\end{align}
where 
\[
\phi(t) : = - \bigl( \chir^2 \Pp_n ( \curl ( (\curl B_n) \times B_n )) + \mu \lb^\a B_n \bigr) + \haf \sum_{k=1}^\infty \Pp_n \Tt^2_k B_n,
\]
and $F: H \to \R$ is given by
\[
F(u) := \twonorm {u}^p = \<u,u \>^{\frac{p}{2}}.
\]

We compute the derivatives of $F$ and begin with:
\[
DF(u) = p \twonorm{u}^{p-2} u, \ \ \  \big( D^2F(u) \big)_{ij} = p(p-2) \twonorm{u}^{\frac{p-4}{2}} u_i u_j + p \twonorm{u}^{\frac{p-2}{2}}I_{ij},
\]
where $I$ is the identity matrix. To see this, let $F(u) = \psi \circ \eta (u)$ with $\psi(z) := z^\frac{p}{2}$ and $\eta(u) = \<u,u\>$ and observe that 
\[
\psi' (z) = \haf p z^{\frac{p-2}{2}}, \ \ \ \psi''(z) = \frac{1}{4} p(p-2) z^{\frac{p-4}{2}},
\]
and 
\begin{align*}
D\eta(u) (h) &= \frac{\p}{\p t} \eta(u+th) = \lim_{t \to 0} \frac{\<u+th, u + th\>-\<u,u\>}{t} = \lim_{t \to \infty} ( 2\<u,h \> + t \<h,h \> ) = 2 \<u , h\>, \\
D^2\eta(u) (h,l) &= 2D (\<u,h\>) = 2 \lim_{t \to \infty} \frac{\<u+tl,h\> - \<u,h \>}{t} = 2 \<h,l\>.
\end{align*}
Then by chain rule we deduce that 
\begin{align*}
DF(u) &= \psi'(u) D\eta (u) = p (\eta(u))^{\frac{p-2}{2}} u = p \twonorm{u}^{p-2} u, \\
\big( D^2 F (u) \big)_{ij} &= \phi''(\eta(u)) (D\eta(u))_i (D\eta(u))_j + \phi'(\eta(u)) D^2 \eta(u) \\
&= \frac{p(p-2)}{4} \twonorm{u}^{p-4} u_i u_j + p \twonorm{u}^{\frac{p-2}{2}} I_{ij},
\end{align*}
from which the first term on the right hand side of \eqref{lemma:Ito_formula_EMHD_1} becomes
\begin{align*}
  \sum_{k = 1}^\infty \< DF (\lb^s B_n(t)), \lb^s \Pp_n \Tt_k B_n dW^k \> &= p \twonorm{\lb^s B_n}^{p-2} \sum_{k=1}^\infty \< \lb^s B_n, \lb^s \Pp_n \Tt_k B_n \> dW^k \\ 
  &= p \twonorm{\lb^s B_n}^{p-2} \sum_{k=1}^\infty \<\Pp_n \Tt_k B_n,\lb^{2s} B_n \> dW^k,
\end{align*}
where we have used the Plancherel's Theorem. As for the second term, we have 
\begin{align*}
    \< DF(\lb^s B_n(t)), \lb^s \phi (t) \> dt  &= -\bigl< DF(\lb^sB_n(t)),\lb^s \bigl( \chi^2_r \Pp_n (\curl ((\curl B_n)\times B_n)) +\mu \lb^\a B_n \bigr) \bigr> dt \\
    &+ \haf \sum_{k = 1}^\infty \<DF (\lb^s B_n(t)), \lb^s \Pp_n \Tt^2_k B_n) \>dt\\
    &=-p \twonorm{\lb^s B_n}^{p-2} \bigl< \chi^2_r \Pp_n (\curl ((\curl B_n)\times B_n) ) + \mu \lb^\a B_n , \lb^{2s} B_n \bigr \> dt \\
    &+\frac{p}{2} \twonorm{\lb^s B_n}^{p-2} \sum_{k=1}^\infty \left< \Pp_n \Tt^2_k B_n, \lb^{2s} B_n  \right>.
\end{align*}
Next, we investigate the third term,
\begin{align*}
&\haf \sum_{k=1}^\infty \tr \bigl( D^2 F(\lb^s B_n(t)) (\lb^s \Pp_n \Tt_k B_n) (\lb^s \Pp_n \Tt_k B_n)^T \bigr )dt  \\
=\ &\haf \sum_{k=1}^\infty \tr \bigl ( D^2 (F(\lb^s B_n(t)))_{ij} (\lb^s \Pp_n \Tt_k B_n)_i (\lb^s \Pp_n \Tt_k B_n)_j \bigr) dt \\
=\ &\frac{p(p-2)}{2} \twonorm{\lb^s B_n}^{p-4}\sum_{k=1}^\infty \<\lb^s B_n, \lb^s \Pp_n \Tt_k B_n \>^2dt+\frac{p}{2}\twonorm{\lb^s B_n}^{p-2}\sum_{k=1}^\infty  \twonorm{\lb^s \Pp_n \Tt_k B_n}^2dt \\
=\ &\frac{p(p-2)}{2} \twonorm{\lb^s B_n}^{p-4}\sum_{k=1}^\infty \< \Pp_n \Tt_k B_n,\lb^{2s} B_n  \>^2dt+\frac{p}{2}\twonorm{\lb^s B_n}^{p-2}\sum_{k=1}^\infty  \twonorm{\lb^s \Pp_n \Tt_k B_n}^2dt.
\end{align*}
Combining the expressions for all three terms of the above yields the result.
\end{proof}

\begin{Theorem}[Burkholder-Davis-Gundy Inequality]\label{Thm:Burkholder-Davis-Gundy Inequality} 
Let $p > 0$ and
\begin{equation}\label{Thm:Burkholder-Davis-Gundy Inequality_1}
M_t := \int_0^t \s_s dB_s, 
\end{equation}
where $B_s$ is a standard Brownian motion and $\s$ is such that 
\[
\E   \Bigl( \int_0^t \|\s_s\|^2_{L_2(\mathscr{U},X)} ds \Bigr)^\frac{p}{2}  < +\infty.
\]
Then there exist $c_p$ and $C_p$, depending the physical dimension and value $p > 0$, such that 
\begin{equation}\label{Thm:Burkholder-Davis-Gundy Inequality_2}
c_p \E \Bigl( \int_0^t \|\s_s\|^2_{L_2(\mathscr{U},X)} ds \Bigr)^\frac{p}{2}  < \E \left( |M^*_T|^p \right) < C_p \E \Bigl( \int_0^t \|\s_s\|^2_{L_2(\mathscr{U},X)} ds \Bigr)^\frac{p}{2} ,
\end{equation}
where
\[
M_T^* := \sup_{t \in [0,T]} |M_t|.
\]
In addition, for $p \geq 2$, $\a \in \left[0,\haf \right)$ and $\s_s \in L^p\bigl(\Om; L^p(0,T;L_2(\mathscr{U},X))\bigr)$, then
\begin{equation}\label{Thm:Burkholder-Davis-Gundy Inequality_3}
\E \| M_. \|^p_{W^{\a,p}(0,T;X)} \les_p \E \int_0^T \|\s_t \|^p_{L_2(\mathscr{U},X)} dt. 
\end{equation}
\end{Theorem}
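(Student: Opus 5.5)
The plan is to treat the two assertions of Theorem~\ref{Thm:Burkholder-Davis-Gundy Inequality} separately. The two-sided maximal inequality \eqref{Thm:Burkholder-Davis-Gundy Inequality_2} is the classical Burkholder--Davis--Gundy inequality for Hilbert-space-valued stochastic integrals, and I would obtain it by reduction to known results. The fractional time-regularity bound \eqref{Thm:Burkholder-Davis-Gundy Inequality_3} I would then derive from \eqref{Thm:Burkholder-Davis-Gundy Inequality_2}, applied to increments, together with an elementary kernel computation.

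For \eqref{Thm:Burkholder-Davis-Gundy Inequality_2}, I would first localize by stopping times so that $M$ is bounded; the general case then follows by monotone convergence. For $p\ge 2$, I would apply It\^o's formula to $F(M_t)=\|M_t\|_X^p$, exactly as in Lemma~\ref{lemma:Ito_formula_EMHD}, using the derivatives of $F$ computed there. The drift term is bounded by $C_p\|M_t\|_X^{p-2}\|\s_t\|^2_{L_2(\mathscr{U},X)}$, and the local martingale part has zero expectation after localization. Taking the supremum, using Doob's maximal inequality and H\"older's inequality gives
\[
\E (M_T^*)^p\le C_p\,\E\Bigl[(M_T^*)^{p-2}\int_0^T\|\s\|^2\,ds\Bigr]\le C_p\bigl(\E (M_T^*)^p\bigr)^{\frac{p-2}{p}}\Bigl(\E\Bigl(\int_0^T\|\s\|^2ds\Bigr)^{\frac p2}\Bigr)^{\frac 2p},
\]
and absorbing the first factor on the right yields the upper bound. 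The lower bound and the range $0<p<2$ I would not reprove. They follow from the scalar BDG inequality applied to the real-valued submartingale $\|M_t\|_X$, whose quadratic variation is $\int_0^t\|\s\|^2_{L_2}ds$, via Davis' inequality or Lenglart's domination argument. I would cite \cite{da2014stochastic} for this step. I expect it to be the only genuinely delicate point, but it is standard.

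For \eqref{Thm:Burkholder-Davis-Gundy Inequality_3}, assume $p\ge2$ and $\a\in[0,\haf)$. The $L^p(0,T;X)$ part of the norm is handled by applying \eqref{Thm:Burkholder-Davis-Gundy Inequality_2} at each fixed $t$ and then H\"older's inequality in time:
\[
\E\int_0^T\|M_t\|_X^p\,dt\le C_pT\,\E\Bigl(\int_0^T\|\s\|^2\Bigr)^{\frac p2}\le C_pT^{\frac p2}\,\E\int_0^T\|\s\|^p\,dt .
\]
For the Gagliardo seminorm, fix $s<t$. Applying \eqref{Thm:Burkholder-Davis-Gundy Inequality_2} to the increment $M_t-M_s=\int_s^t\s\,dB$, and then H\"older's inequality (this is where $p\ge2$ is used), gives
\[
\E\|M_t-M_s\|_X^p\le C_p\,|t-s|^{\frac p2-1}\,\E\int_s^t\|\s_r\|^p\,dr .
\]
By symmetry and Fubini, the seminorm is then bounded by
\[
2C_p\,\E\int_0^T\|\s_r\|^p K(r)\,dr,\qquad K(r):=\int_0^r\!\!\int_r^T (t-s)^{\frac p2-2-\a p}\,dt\,ds .
\]
Since $\frac p2-2-\a p>-2$ exactly when $\a<\haf$, a direct computation gives $\sup_r K(r)\le C_{p,\a}T^{\frac p2-\a p}<\infty$. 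Combining the two parts yields \eqref{Thm:Burkholder-Davis-Gundy Inequality_3}. The only care needed is this integrability of the kernel, which is precisely what forces the restriction $\a<\haf$.
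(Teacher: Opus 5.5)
The paper gives no argument for this theorem. It cites Zhang's book (Theorem 2.4.1) for \eqref{Thm:Burkholder-Davis-Gundy Inequality_2} and Flandoli--Gatarek (Lemma 2.1) for \eqref{Thm:Burkholder-Davis-Gundy Inequality_3}. So you are supplying proofs the paper outsources, and most of what you write is correct.

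\textbf{What works.} Your derivation of \eqref{Thm:Burkholder-Davis-Gundy Inequality_3} is the Flandoli--Gatarek argument: BDG on increments, H\"older in time, then Tonelli. Your kernel computation is right. With $\beta=\frac p2-2-\alpha p$, $u=r-s$ and $v=t-r$, one has $K(r)\le\int_0^T\!\int_0^T(u+v)^\beta\,du\,dv\lesssim T^{\beta+2}$, which is finite exactly when $\alpha<\frac12$. This also shows the implicit constant depends on $\alpha$ and $T$, not only on $p$. Your It\^o--Doob--H\"older proof of the upper bound for $p\ge 2$ is the classical one and is fine. You also, rightly, prove the version with $\le$ and $\int_0^T$; as literally stated, with strict inequalities, the claim fails for $\sigma\equiv 0$.

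\textbf{The step that does not work.} Your reduction for the lower bound and for $0<p<2$ fails as written. The process $\|M_t\|_X$ is only a submartingale, so the scalar BDG inequality does not apply to it. Its quadratic variation is also not $A_t:=\int_0^t\|\sigma_s\|^2_{L_2(\mathscr U,X)}\,ds$: away from $M=0$ its martingale part is $\int\langle M/\|M\|,\sigma\,dW\rangle$, whose bracket is in general strictly smaller.

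The correct scalar object is the real local martingale $N_t:=\int_0^t\langle M_s,\sigma_s\,dW_s\rangle$. It\^o's formula gives $\|M_t\|_X^2=A_t+2N_t$, and $\langle N\rangle_T\le (M_T^*)^2A_T$. Apply the scalar BDG inequality to $N$ with exponent $p/2$, then Cauchy--Schwarz:
\[
\E (M_T^*)^p\le C_p\Bigl(\E A_T^{p/2}+\bigl(\E (M_T^*)^p\bigr)^{1/2}\bigl(\E A_T^{p/2}\bigr)^{1/2}\Bigr),\qquad \E A_T^{p/2}\le C_p\Bigl(\E (M_T^*)^p+\bigl(\E (M_T^*)^p\bigr)^{1/2}\bigl(\E A_T^{p/2}\bigr)^{1/2}\Bigr).
\]
Localize so that both sides are finite, then absorb the mixed term by Young's inequality. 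This gives both halves of \eqref{Thm:Burkholder-Davis-Gundy Inequality_2} for every $p>0$.

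Lenglart's domination, which you mention, covers only part of this: the upper bound for $p<2$ and the lower bound for $p\le 2$. Davis' inequality is specific to $p=1$.

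\textbf{Why this matters for the paper.} The exponent the paper actually uses is $p=1$. It appears in the bound on $I_4$ in Proposition \ref{prop:uni_energy_estimate}, in the continuity argument for $t\mapsto\|\widetilde B(t)\|_{H^s}$, and in Corollary \ref{Coro:Martingale_inequality}. Your self-contained $p\ge 2$ argument does not reach it: Jensen from the $p=2$ bound gives only $(\E A_T)^{1/2}$, not $\E A_T^{1/2}$. Also check your citation. As far as I know, the Hilbert-space BDG in Da Prato--Zabczyk is stated only as an upper bound for exponents $\ge 2$, proved by your It\^o argument, so it does not supply the cases you defer.
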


\begin{proof}
See \cite[Theorem 2.4.1]{zhang2017backward} for \eqref{Thm:Burkholder-Davis-Gundy Inequality_2} and \cite[Lemma 2.1]{FG95} for \eqref{Thm:Burkholder-Davis-Gundy Inequality_3}.
\end{proof}

\begin{Corollary}\label{Coro:Martingale_inequality}
Let $\e, \k > 0$, if $M_t$ is a continuous local martingale as in \eqref{Thm:Burkholder-Davis-Gundy Inequality_1}, then we have that 
\[
\P \Bigl( \sup_{t\in [0,T]} |M_t| \geq \e \Bigr) \leq \frac{C}{\e} \E \Bigl( \Bigl( \int_0^T |\sigma_t|^2 dt \Bigr)^\haf \wedge \k \Bigr) + \P \Bigl( \Bigl ( \int_0^T |\sigma_t|^2 dt \Bigr)^\haf > \k \Bigr),
\]
for some positive constant $C$.
\end{Corollary}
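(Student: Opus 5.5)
The plan is a localization argument: stop the martingale once its quadratic variation reaches $\k$, then apply Markov's inequality together with the case $p=1$ of the Burkholder--Davis--Gundy inequality (Theorem \ref{Thm:Burkholder-Davis-Gundy Inequality}) to the stopped martingale.

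\textbf{Step 1 (stopping time).} Write $A_t := \bigl(\int_0^t |\s_s|^2 ds\bigr)^{\haf}$. This is a continuous, nondecreasing, adapted process. Define
\[
\tau_\k := \inf\{ t \in [0,T] : A_t > \k \} \wedge T .
\]
This is a stopping time, and the stopped process $M_{t\wedge\tau_\k} = \int_0^t \mathbbm{1}_{\{s\le \tau_\k\}}\s_s\, dB_s$ is a genuine martingale. By continuity of $A$ we have
\[
A_{\tau_\k} = A_T \wedge \k .
\]
To see this, note that if $A_T\le\k$ then $\tau_\k=T$. Otherwise $A$ first exceeds $\k$ at $\tau_\k$, so $A_{\tau_\k}=\k$.

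\textbf{Step 2 (splitting the event).} Decompose
\[
\Bigl\{\sup_{t\in[0,T]}|M_t|\ge\e\Bigr\} \subset \Bigl\{\sup_{t\in[0,T]}|M_{t\wedge\tau_\k}|\ge\e\Bigr\} \cup \{\tau_\k<T\}.
\]
This holds because on $\{\tau_\k=T\}$ the processes $M$ and $M_{\cdot\wedge\tau_\k}$ coincide on $[0,T]$. Moreover $\{\tau_\k<T\}\subset\{A_T>\k\}$: by the definition of the infimum, $A$ exceeds $\k$ at times arbitrarily close to $\tau_\k$ from the right, and these times lie in $[0,T]$. Since $A$ is nondecreasing, this gives $A_T>\k$. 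This accounts for the second term of the claimed bound.

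\textbf{Step 3 (Markov and BDG).} For the first event, Markov's inequality and the BDG inequality \eqref{Thm:Burkholder-Davis-Gundy Inequality_2} with $p=1$ give
\[
\P\Bigl(\sup_{t\in[0,T]}|M_{t\wedge\tau_\k}|\ge\e\Bigr) \le \frac{1}{\e}\,\E \sup_{t\in[0,T]}|M_{t\wedge\tau_\k}| \le \frac{C_1}{\e}\,\E\, A_{\tau_\k} = \frac{C_1}{\e}\,\E\bigl(A_T\wedge\k\bigr).
\]
The BDG inequality applies because $\E A_{\tau_\k} \le \k < \infty$, so its integrability hypothesis holds. Adding the two contributions yields the corollary with $C=C_1$.

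\textbf{Main point of care.} The only delicate issue is localization. The unstopped $M$ need not satisfy the integrability hypothesis of the BDG inequality, so we apply it only to the stopped process. The identity $A_{\tau_\k}=A_T\wedge\k$ and the inclusion $\{\tau_\k<T\}\subset\{A_T>\k\}$ both rest on the continuity and monotonicity of $A$. Everything else is routine.
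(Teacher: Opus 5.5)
Your proof is correct and follows the same route as the paper. Both use the stopping time $\tau$ at which $\bigl(\int_0^t|\sigma_s|^2ds\bigr)^{1/2}$ first exceeds $\k$, split the event into $\{\tau=T\}$ and $\{\tau<T\}$, and finish with Markov plus BDG at $p=1$.

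Your version is actually more careful than the paper's displayed chain. The paper bounds the first piece by the unstopped quantity $\frac{C}{\e}\E\bigl(\int_0^T|\sigma_t|^2dt\bigr)^{1/2}$ and then replaces it by the smaller $\wedge\,\k$ quantity, which does not follow as written. Your explicit application of BDG to $M_{\cdot\wedge\tau}$, together with $A_\tau=A_T\wedge\k$, is exactly what justifies that step.
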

\begin{proof}
The proof follows from \cite[Corollary~D.0.2]{prevot2007concise}. Let \[
\tau := \inf \Bigl \{ t \geq 0 : \Bigl( \int_0^t |\sigma_s|^2 ds \Bigr)^\haf  > \k \Bigr \} \wedge T.
\]
Then $\tau \leq T$ is an $\Ff_t -$stopping time. Hence by Theorem \ref{Thm:Burkholder-Davis-Gundy Inequality}, 
\begin{align*}
    \P \Bigl ( \sup_{t \in [0,T]} |M_t| \geq \e \Bigr ) &= \P \Bigl ( \sup_{t \in [0,T]} |M_t| \geq \e, \tau = T  \Bigr ) + \P \Bigl ( \sup_{t \in [0,T]} |M_t| \geq \e, \tau < T  \Bigr ) \\
    &\leq \frac{C}{\e} \E  \Bigl( \int_0^T |\sigma_t|^2 dt \Bigr)^\haf  + \P \Bigl( \sup_{t \in [0,T]} |M_t| \geq \e, \Bigl( \int_0^T |\sigma_t|^2 dt \Bigr)^\haf > \k \Bigr) \\
    &\leq \frac{C}{\e} \E \Bigl( \Bigl( \int_0^T |\sigma_t|^2 dt \Bigr)^\haf \wedge \k \Bigr) + \P \Bigl( \Bigl( \int_0^T |\sigma_t|^2 dt \Bigr)^\haf > \k \Bigr). 
\end{align*}
\end{proof}

\noindent If $H$ is a complete and separable metric space, and if $\Gg$ is a family of probability measures on $H$, then we say $\Gg$ is \textit{tight} if for any $\e > 0$, there exists a compact subset $K_\e \subseteq H$ such that 
\[
\mu(K_\e) \geq 1 - \e, \ \ \ \forall \e \in \Gg.
\]

\begin{Theorem}[Prokhorov] \label{Thm:Prokhorov}
Let $H$ be a complete and separable metric space. A family $\Gg$ of probability measures on $(H,\mathscr{B}(H))$ is relatively compact if and only if it is tight.
\end{Theorem}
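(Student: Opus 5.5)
The statement to prove is Prokhorov's theorem on a complete separable metric space $H$. I would prove the two directions separately, treating sufficiency (tight implies relatively compact) as the substantive part and necessity as a covering argument that uses completeness and separability.

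\emph{Tight $\Rightarrow$ relatively compact.} I will reduce to a compact metric space. Tightness gives compact sets $K_1\subset K_2\subset\cdots$ with $\mu(K_m)\ge 1-1/m$ for all $\mu\in\Gg$; they can be taken increasing by replacing $K_m$ with $K_1\cup\cdots\cup K_m$. Given a sequence $(\mu_n)\subset\Gg$, restrict each $\mu_n$ to $K_m$. On the compact metric space $K_m$, the Riesz representation theorem identifies finite measures with the positive functionals on $C(K_m)$. Since $C(K_m)$ is separable, Banach--Alaoglu yields a subsequence along which $\mu_n|_{K_m}$ converges weakly-$*$ to a measure $\nu_m$. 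A diagonal argument gives one subsequence that works for every $m$. The limits $\nu_m$ are increasing in $m$, because $K_m\subset K_{m+1}$ and the restrictions are monotone; they satisfy $\nu_m(H)\ge 1-1/m$ and have total mass at most $1$. Define $\mu(A):=\lim_m \nu_m(A\cap K_m)$. This is countably additive as an increasing limit of measures, and it is a probability measure by the mass bounds. To check $\int f\,d\mu_n\to\int f\,d\mu$ for bounded continuous $f$, split $\int f\,d\mu_n=\int_{K_m}f\,d\mu_n+\int_{K_m^c}f\,d\mu_n$; the second term is at most $\|f\|_\infty/m$. The first term converges to $\int f\,d\nu_m$, and $\int f\,d\nu_m$ differs from $\int f\,d\mu$ by at most $\|f\|_\infty/m$. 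Letting $m\to\infty$ gives the claim.

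\emph{Relatively compact $\Rightarrow$ tight.} Fix a countable dense set $\{x_i\}\subset H$. For each $k$, the finite unions $G_{N}^k=\bigcup_{i\le N}B(x_i,1/k)$ are open and increase to $H$. I claim that for every $\e>0$ there is $N_k$ with $\mu(G^k_{N_k})\ge 1-\e2^{-k}$ for all $\mu\in\Gg$. If this failed, we could pick $\mu_N\in\Gg$ with $\mu_N(G^k_N)<1-\e2^{-k}$ for every $N$. Passing to a weakly convergent subsequence $\mu_N\to\mu$ and applying the portmanteau inequality for open sets, for each fixed $M$ we get
\[
\mu(G^k_M)\le\liminf_N\mu_N(G^k_M)\le\liminf_N\mu_N(G^k_N)\le 1-\e2^{-k},
\]
where the middle step uses $G^k_M\subset G^k_N$ for $N\ge M$. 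Letting $M\to\infty$ gives $\mu(H)\le 1-\e 2^{-k}$, a contradiction. Now set $K_\e=\bigcap_k\overline{G^k_{N_k}}$. This set is closed and totally bounded, hence compact because $H$ is complete. Since closure only enlarges $G^k_{N_k}$, its complement has $\mu$-measure at most $\sum_k \e2^{-k}=\e$, so $\mu(K_\e)\ge1-\e$ uniformly in $\mu\in\Gg$.

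\emph{Main obstacle.} I expect the hardest step to be constructing the limit measure in the first direction and showing that no mass escapes, which is exactly where tightness enters. I will handle it with the monotone family $\nu_m$ rather than trying to compactify $H$ directly. Everything else reduces to portmanteau-type inequalities, which I take as standard.
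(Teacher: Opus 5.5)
Your proof is correct, but there is no proof in the paper to compare it with: the paper only cites \cite[Theorem 2.3]{da2014stochastic}. Your argument is therefore a self-contained replacement for that citation. Both halves are the standard ones.
\begin{itemize}
\item Tight implies relatively compact: you build the limit as the increasing limit of the weak-$*$ limits $\nu_m$ of the restrictions $\mu_n|_{K_m}$, using Riesz plus Banach--Alaoglu on the separable space $C(K_m)$. The $\|f\|_\infty/m$ tail control then gives weak convergence.
\item Relatively compact implies tight: you apply the portmanteau inequality to the open sets $G^k_N$ and intersect their closures to get a closed, totally bounded, hence compact set.
\end{itemize}

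What your route shows, and the citation hides, is where the hypotheses enter. The first direction works in any metric space. The second needs separability, for the countable dense set, and completeness, so that closed and totally bounded implies compact. The paper uses exactly this second direction when it deduces tightness of the laws of $\Pp_n B_0$ and of $\W$ from their relative compactness, on the Polish spaces $H^s$ and $C([0,T];\mathscr U)$.

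Three small steps deserve a line each if you write this out.
\begin{itemize}
\item The setwise monotonicity $\nu_m\le\nu_{m+1}$ comes from $\int f\,d\nu_m\le\int f\,d\nu_{m+1}$ for nonnegative $f\in C_b(H)$. Combine this with closed-set regularity of finite Borel measures on a metric space, approximating $\mathbbm{1}_F$ from above by $\max(0,1-j\,d(\cdot,F))$.
\item You use relative compactness in the sequential sense. This agrees with the topological sense because the weak topology on probability measures over a separable metric space is metrizable, for instance by the L\'evy--Prokhorov metric.
\item In the second direction, the portmanteau step needs the weak limit $\mu$ to be a probability measure. That is part of what relative compactness inside the space of probability measures means.
\end{itemize}
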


\begin{proof}
See \cite[Theorem 2.3]{da2014stochastic}.
\end{proof}

\begin{Theorem}[Skorokhod's representation theorem] \label{Thm:Skorokhod}
Suppose that $H$ is a complete and separable metric space. Let $\{ \mu_n \}_{n \in \N}$ be a sequence of probability measure on $\mathscr{B} (H)$ converging weakly to a probability measure $\mu$. Then there exists a probability space $\left(\Omega,\mathscr{F},\P  \right)$, a sequence of random variables $\{X_n\}_{n \in \N}$ whose laws are $\{ \mu_n \}_{n \in \N}$ and a random variable $X$ whose law is $\mu$ such that 
\[
\lim_{n \to \infty} X_n = X, \ \ \ \P\text{-a.s.}
\]
\end{Theorem}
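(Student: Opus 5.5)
The plan is to give the classical construction on the unit interval. Take $\Omega=[0,1)$ with its Borel $\sigma$-algebra and Lebesgue measure $\P$, and realize $X_n$ and $X$ as functions of a single uniform variable $\omega$. Everything rests on one fact from the portmanteau theorem: weak convergence $\mu_n\to\mu$ gives $\mu_n(A)\to\mu(A)$ for every $\mu$-continuity set $A$, i.e.\ every Borel $A$ with $\mu(\partial A)=0$.

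\textbf{Step 1: partitions into continuity sets.} Fix a countable dense set $\{h_i\}\subset H$, which exists since $H$ is separable. For each $h_i$, only countably many radii $\rho$ can satisfy $\mu(\{x: d(x,h_i)=\rho\})>0$. So for each level $k$ we can choose radii $\rho_k\in(2^{-k-1},2^{-k})$ for which every sphere $\partial B(h_i,\rho_k)$ is $\mu$-null. Disjointify the balls $B(h_i,\rho_k)$ in the order $i=1,2,\dots$ and intersect with the level-$(k-1)$ sets. This gives a countable partition $\{A^{k}_{i_1\cdots i_k}\}$ of $H$ with the following properties:
\begin{enumerate}
\item each set has diameter at most $2^{1-k}$;
\item each set is a $\mu$-continuity set, since Boolean combinations of continuity sets are continuity sets;
\item the partition at level $k$ refines the one at level $k-1$.
\end{enumerate}

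\textbf{Step 2: intervals.} For each probability measure $\nu\in\{\mu,\mu_1,\mu_2,\dots\}$, assign to the cells of level $k$ consecutive half-open intervals $I^k_{i_1\cdots i_k}(\nu)\subset[0,1)$. Use the lexicographic order of the multi-indices and give each interval length $\nu(A^k_{i_1\cdots i_k})$. The nesting of the cells then makes the intervals nested in $k$. In each nonempty cell pick a point $x^k_{i_1\cdots i_k}$.

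Define $Y^k_\nu(\omega)=x^k_{i_1\cdots i_k}$ when $\omega\in I^k_{i_1\cdots i_k}(\nu)$. Nestedness and the diameter bound give $d(Y^k_\nu,Y^{k+1}_\nu)\le 2^{1-k}$ pointwise. Since $H$ is complete, $Y_\nu:=\lim_k Y^k_\nu$ exists. One then checks that $Y_\nu$ has law exactly $\nu$: the law of $Y_\nu$ agrees with $\nu$ on every cell up to an error controlled by the diameters, and the cells generate the Borel sets. Set $X_n:=Y_{\mu_n}$ and $X:=Y_\mu$.

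\textbf{Step 3: almost sure convergence.} Fix $\omega$ in the interior of some $I^k_{i_1\cdots i_k}(\mu)$; this excludes only countably many endpoints, a $\P$-null set. By Step 1, $\mu_n(A)\to\mu(A)$ for each of the finitely many cells preceding or equal to the given one in the lexicographic order. Hence the endpoints of $I^k(\mu_n)$ converge to those of $I^k(\mu)$, and $\omega\in I^k_{i_1\cdots i_k}(\mu_n)$ for all large $n$. For such $n$,
\[
d(X_n(\omega),X(\omega))\le d(X_n,Y^k_{\mu_n})+d(Y^k_{\mu_n},Y^k_\mu)+d(Y^k_\mu,X)\lesssim 2^{-k},
\]
because both points lie in the same cell of diameter at most $2^{1-k}$ and the tails are geometric. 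Letting $k\to\infty$ gives $X_n(\omega)\to X(\omega)$.

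\textbf{Main obstacle.} The delicate step is the lexicographic ordering when the partition at each level is countably infinite. The interval $I^k_{i_1\cdots i_k}$ then depends on infinitely many cell masses only in the limit, and $\mu_n$-mass can escape to cells of large index. I would handle this in one of two ways. The first option is to first replace each level partition by a finite one: keep finitely many cells plus a remainder cell carrying $\mu$-mass at most $2^{-k}$, and place the remainder last in the order. The second option is to use tightness of $\{\mu_n\}$, which holds by Prokhorov (Theorem \ref{Thm:Prokhorov}), to confine most of the mass to a compact set covered by finitely many cells. In either case every interval endpoint depends on finitely many continuity-set masses, so Step 3 applies to all $\omega$ outside a set of measure $O(2^{-k})$. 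Summing over $k$ with Borel--Cantelli completes the proof.
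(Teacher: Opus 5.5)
Your construction is the classical Skorokhod argument on $[0,1)$: nested partitions of $H$ into $\mu$-continuity sets, with lexicographically ordered intervals. The paper gives no proof of its own and only cites Da Prato--Zabczyk, Theorem 2.4, whose proof is this same construction, so your route is the right one.

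There is one misstep, in Step 3. The claim that only finitely many cells precede $A^k_{i_1\cdots i_k}$ is false for $k\ge 2$: every $A^2_{1j}$, $j\in\N$, precedes $A^2_{21}$. However, the ``main obstacle'' you then diagnose is not real. Because the cells are nested, the cells preceding $(i_1,\dots,i_k)$ group into the coarser cells $A^m_{i_1\cdots i_{m-1}j}$ with $m\le k$ and $j<i_m$. So the left endpoint of $I^k_{i_1\cdots i_k}(\nu)$ is
\[
\sum_{m=1}^{k}\sum_{j<i_m}\nu\bigl(A^m_{i_1\cdots i_{m-1}j}\bigr),
\]
and the right endpoint adds $\nu(A^k_{i_1\cdots i_k})$. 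This formula is also what makes ``consecutive intervals'' well defined when the lexicographic order on $\N^k$ is not of type $\omega$. Each endpoint is a finite sum of masses of $\mu$-continuity sets, so the endpoints for $\mu_n$ converge to those for $\mu$ directly. Step 3 therefore holds for every $\omega$ outside the countable set of endpoints, with no truncation, tightness or Borel--Cantelli argument needed.

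You should drop the remainder-cell remedy, because it would damage the construction. A remainder cell has no diameter bound, so $Y^k_{\mu_n}(\omega)$ need not be Cauchy on the corresponding intervals. Moreover, Borel--Cantelli controls only the $\mu$-mass of the remainders, not their $\mu_n$-mass for a fixed $n$, so you would lose the requirement that $X_n$ have law exactly $\mu_n$.

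Finally, the identification of the law of $Y_\nu$ is cleanest with test functions. For bounded Lipschitz $f$,
\[
\Bigl|\int f\,d\nu-\sum_i\nu(A^k_i)f(x^k_i)\Bigr|\le \mathrm{Lip}(f)\,2^{1-k},
\]
while $\E f(Y^k_\nu)\to\E f(Y_\nu)$ by pointwise convergence and dominated convergence. Hence the law of $Y_\nu$ is $\nu$.
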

\begin{proof}
See \cite[Theorem 2.4]{da2014stochastic}.
\end{proof}

\section{Proof of Lemma \ref{Lemma:stochastic_convergence}}\label{appendix:proof_lemma_stochastic_convergence}
Here we provide the proof of Lemma \ref{Lemma:stochastic_convergence}, which follows closely to \cite{ANR11}. In the following, we will use $|\cdot|$ for $|\cdot|_X$ norm and $\|\cdot \|$ for $|\cdot|_{L_2(\mathscr{U},X)}$ norm. First, we denote the sequence of the stochastic integral and their limit by
\[
\ell^n := \int_0^t G^n dW^n = \sumk \int_0^t G^n_k dW^n_k, \ \ \ \ell := \int_0^t G dW = \sumk \int_0^t G_k dW_k.
\]
In addition, we denote their Fourier truncations by 
\[
\ell^n_N := \sum_{k = 1}^N \int_0^t G^n_k dW^n_k, \ \ \ \ell_N := \sum_{k = 1}^N \int_0^t G_k dW_k.
\]
Next we consider 
\begin{align}
\sup_{\tau \in [0,t]} \left| \ell^n - \ell \right| (\tau) &\leq  \sup_{\tau \in [0,t]} \left| \ell^n - \ell^n_N \right| (\tau) + \sup_{\tau \in [0,t]} \left| \ell^n_N - \ell_N \right| (\tau) + \sup_{\tau \in [0,t]} \left| \ell_N - \ell \right| (\tau). \label{proof_lemma:stochastic_convergence_1}
\end{align}
It suffices to show that the above terms converge in probability, i.e. that there exists $N_1 > 0$ such that

\begin{enumerate}[label=(\roman*)]
\item For all $n > N > N_1$, we have 
\[
\P \Bigl( \sup_{\tau \in [0,t]} \left| \ell^n - \ell^n_N \right| (\tau) > \e \Bigr) < \frac{\d}{3}.  \label{proof_lemma:stochastic_convergence_2}
\]
\item For all $n > N_1$ and all $k \in \N$, there holds 
\[
\P \Bigl( \sup_{\tau \in [0,t]} \left| \ell^n_k - \ell_k \right| (\tau) > \e \Bigr) < \frac{\d}{3}.  \label{proof_lemma:stochastic_convergence_3}
\]
\item For $N > N_1$, 
\[
\P \Bigl( \sup_{\tau \in [0,t]} \left| \ell - \ell_N \right| (\tau) > \e \Bigr) < \frac{\d}{3}.  \label{proof_lemma:stochastic_convergence_4}
\]
\end{enumerate}
for any $\e, \d >0$. Now fixing $\e$ and $\d$ positive, with Corollary \ref{Coro:Martingale_inequality} we have that for any $\k > 0$: 
\begin{align*}
    \P \Bigl( \sup_{\tau \in [0,t]} \left| \ell^n - \ell^n_N \right| (\tau) > \e \Bigr) &\leq \frac{C \k}{\e} + \P \Bigl( \sum_{k \geq N} \Bigl ( \int_0^T \|G^n_k\|^2 dt  \Bigr)^\haf > \k \Bigr),
\end{align*}
for some positive constant $C$. In particular, letting $\k = \frac{\d \e}{6C}$ we obtain from the above that 
\begin{align*}
\P \Bigl( \sup_{\tau \in [0,t]} \bigl| \ell^n - \ell^n_N \bigr| (\tau) > \e \Bigr) &\leq \frac{\d}{6} + \P \Bigl( \sum_{k \geq N} \Bigl ( \int_0^T \|G^n_k - G_k\|^2 dt  \Bigr)^\haf > \frac{\k}{2} \Bigr) \\
& \ + \P \Bigl( \sum_{k \geq N} \Bigl ( \int_0^T \|G_k\|^2 dt  \Bigr)^\haf > \frac{\k}{2} \Bigr) \\
&\ \ = \frac{\d}{6} + \P \Bigl( \sum_{k \geq N} \Bigl ( \int_0^T \|G^n_k - G_k\|^2 dt  \Bigr)^\haf > \frac{\d \e}{12 C} \Bigr) \\
&\ \ \ + \P \Bigl( \sum_{k \geq N} \Bigl ( \int_0^T \|G_k\|^2 dt  \Bigr)^\haf > \frac{\d \e}{12 C} \Bigr).
\end{align*}
Since $G^n \to G$ in probability in $C([0,T];L_2(\mathscr{U},X))$, there exists $N_1 > 0$ such that for all $n > N > N_1$,
\[
\P \Bigl( \sup_{\tau \in [0,t]} \left| \ell^n - \ell^n_N \right| (\tau) > \e \Bigr) \leq \frac{\d}{3},
\]
from which we conclude \ref{proof_lemma:stochastic_convergence_2}. The argument for \ref{proof_lemma:stochastic_convergence_4} is followed by a similar manner. \\\\
\noindent In order to show \ref{proof_lemma:stochastic_convergence_3}, we will need to mollify $G^n_k$ and $G_k$ in time. For fixed $\l > 0$, we let $\tphi_\l$ be the standard mollifier on $\R$, i.e., $\tphi (0) = 1, \; \tphi \in C_0^\infty (\R) \; \text{and} \;\int_\R \tphi (t) dt = 1,$ such that $\tphi_\l (t) := \frac{1}{\l} \tphi \left(\frac{t}{\l} \right), \; \l >0. $
Furthermore, we define
\[
\tJ_\l (G):= \frac{1}{\l} \int_0^t \tphi_\l(t-s) G(s) ds, \ \ \ G \in C ([0,T];X), \ \ \ \l >0.   
\]
Now for any fixed $k \in \N$, using integration by parts we obtain
\begin{align}
    \Bigl | \ell^n_k - \ell_k  \Bigr| &= \sup_{t \in [0,T]} \Bigl | \int_0^t G^n_k d W^n_k - \int_0^t G_k dW_k \Bigr | \notag \\
    & \leq \sup_{t \in [0,T]} \Bigl | \int_0^t \left( G^n_k - \tJ_\l (G^n_k) \right) dW^n_k  \Bigr| + \sup_{t \in [0,T]} \Bigl | \int_0^t \Bigl( \tJ_\l (G_k) - G_k \Bigr) dW_k  \Bigr| \notag \\ 
    &+ \sup_{t \in [0,T]} \Bigl | \int_0^t \tJ_\l (G^n_k) dW^n_k - \int_0^t \tJ_\l (G_k) dW_k  \Bigr | \notag \\
    & \leq \sup_{t \in [0,T]} \Bigl | \int_0^t \left( G^n_k - \tJ_\l (G^n_k) \right) dW^n_k \Bigr| + \sup_{t \in [0,T]} \Bigl | \int_0^t \left( \tJ_\l (G_k) -G_k \right) dW_k \Bigr| \notag \\
    & + \sup_{t \in [0,T]} \Bigl | \tJ_\l (G^n_k) W^n_k - \tJ_\l (G_k) W_k \Bigr|  \notag \\ 
    &+ \frac{1}{\l} \sup_{t \in [0,T]} \Bigl | \int_0^t \left( \tJ_\l (G_k) W_k - \tJ_\l (G^n_k) W^n_k\right) ds \Bigr | + \frac{1}{\l} \sup_{t \in [0,T]} \Bigl | \int_0^t \left( G_k W_k - G^n_k W^n_k\right) ds \Bigr |. \label{proof_lemma:stochastic_convergence_5}
\end{align}
Fixing $\e, \d >0$, the first term on the right hand side of \eqref{proof_lemma:stochastic_convergence_5} is estimated by 
\begin{align}
 & \ \  \P \Bigl( \sup_{t \in [0,T]} \Bigl| \int_0^t \bigl(G^n_k - \tJ_\l (G^n_k) \bigr) dW^n_k \Bigr| > \e \Bigr) \notag \\
 & \leq \frac{\d}{20} + \P \Bigl( \Bigl( \int_0^T \left \| G^n_k - G_k \right \|^2 dt \Bigr)^\haf > \frac{\d \e}{60C} \Bigr) 
 + \P \Bigl( \Bigl( \int_0^T \bigl \| G_k - \tJ_\l (G_k) \bigr \|^2 dt \Bigr)^\haf > \frac{\d \e}{60C} \Bigr) \notag \\ 
 &+ \P \Bigl( \Bigl( \int_0^T \bigl \| \tJ_\l(G_k) - \tJ_\l(G^n_k) \bigr \|^2 dt \Bigr)^\haf > \frac{\d \e}{60C} \Bigr), \notag 
\end{align}
where we used again Corollary \ref{Coro:Martingale_inequality}. From the convergence $G^n_k \to G_k$ in $C([0,T];L_2(\mathscr{U},X))$ and the property for standard mollifier, by choosing $\l > 0$ small enough, we can conclude that there exists $N_1 \in \N$ such that for $n > N_1$,
\begin{equation}\label{proof_lemma:stochastic_convergence_6}
\P \Bigl( \sup_{t \in [0,T]} \Bigl| \int_0^t \bigl(G^n_k - \tJ_\l (G^n_k) \bigr) dW^n_k \Bigr| > \e \Bigr) < \frac{\d}{15}.
\end{equation}
Similarly one has that for $n >N_1$, 
\begin{equation}\label{proof_lemma:stochastic_convergence_7}
\P \Bigl( \sup_{t \in [0,T]} \Bigl| \int_0^t \bigl( \tJ_\l( G_k ) - G_k \bigr) dW^n_k \Bigr| > \e \Bigr) < \frac{\d}{15}.
\end{equation}
Next we consider 
\begin{align*}
    &\P \Bigl( \sup_{t \in [0,T]} \bigl| \tJ_\l (G^n_k) W^n_k - \tJ_\l (G_k) W_k \bigr| > \e \Bigr) \\
    \leq\ &\P \Bigl( \sup_{t \in [0,T]} \bigl| \tJ_\l (G^n_k) W^n_k - \tJ_\l (G_k) W^n_k \bigr| > \frac{\e}{2} \Bigr) 
+ \P \Bigl( \sup_{t \in [0,T]} \bigl| \tJ_\l (G_k) W^n_k - \tJ_\l (G_k) W_k \bigr| > \frac{\e}{2} \Bigr) \\
    \leq \ & \P \Bigl( \sup_{t \in [0,T]}  \bigl |W^n_k  \bigr|_{\mathscr{U}} \sup_{t \in [0,T]}  \bigl \| \tJ_\l (G^n_k) - \tJ_\l (G_k)\bigr \| > \frac{\e}{2} \Bigr) + \P \Bigl( \sup_{t \in [0,T]}  \bigl |W^n_k - W_k \bigr|_{\mathscr{U}} \sup_{t \in [0,T]}  \bigl \| \tJ_\l (G_k)\bigr \| > \frac{\e}{2} \Bigr).
\end{align*}
Again, from the convergence $W^n \to W$, $G^n_k \to G_k$ and the property of the standard mollification, we conclude for $n > N_1$ it holds that 
\begin{equation}\label{proof_lemma:stochastic_convergence_8}
\P \Bigl( \sup_{t \in [0,T]} \bigl| \tJ_\l (G^n_k) W^n_k - \tJ_\l (G_k) W_k \bigr| > \e \Bigr) < \frac{\d}{15}.
\end{equation}
The last two terms in \eqref{proof_lemma:stochastic_convergence_5} are estimated by a similar manner:
\begin{align*}
   &\P \Bigl(  \frac{1}{\l} \sup_{t \in [0,T]} \Bigl | \int_0^t \bigl( \tJ_\l (G_k) W_k - \tJ_\l (G^n_k) W^n_k\bigr) ds \Bigr | > \e \Bigr) \\
\leq \ & \P \Bigl(  \frac{1}{\l} \sup_{t \in [0,T]} \Bigl | \int_0^t \bigl( \tJ_\l (G_k) W^n_k - \tJ_\l (G^n_k) W^n_k\bigr) ds \Bigr | > \frac{\e}{2} \Bigr) +\P \Bigl(  \frac{1}{\l} \sup_{t \in [0,T]} \Bigl | \int_0^t \bigl( \tJ_\l (G_k) W^n_k - \tJ_\l (G^n_k) W^n_k\bigr) ds \Bigr | > \frac{\e}{2} \Bigr) \\
\leq \ & \P \Bigl( \sup_{t \in [0,T]}   |W^n_k |_{\mathscr{U}} \sup_{t \in [0,T]}  \bigl \| \tJ_\l (G^n_k) - \tJ_\l (G_k)\bigr \| > \frac{\e \l}{2T} \Bigr) + \P \Bigl( \sup_{t \in [0,T]}  |W^n_k - W_k |_{\mathscr{U}} \sup_{t \in [0,T]}  \bigl \| \tJ_\l (G_k)\bigr \| > \frac{\e \l}{2 T} \Bigr).
\end{align*} 
Therefore the conclusion holds as in $\eqref{proof_lemma:stochastic_convergence_8}$ by choosing $\l >0$ small enough: 
\begin{equation}\label{proof_lemma:stochastic_convergence_9}
\P \Bigl(  \frac{1}{\l} \sup_{t \in [0,T]} \Bigl | \int_0^t \left( \tJ_\l (G_k) W_k - \tJ_\l (G^n_k) W^n_k\right) ds \Bigr | > \e \Bigr) < \frac{\d}{15}, \ \ \text{for} \ \ n > N_1.
\end{equation}
and similarly
\begin{equation}\label{proof_lemma:stochastic_convergence_10}
\P \Bigl(  \frac{1}{\l} \sup_{t \in [0,T]} \Bigl | \int_0^t \left( G_k W_k - G^n_k W^n_k\right) ds \Bigr | > \e \Bigr) < \frac{\d}{15}, \ \ \text{for} \ \ n > N_1.
\end{equation}
Combining \eqref{proof_lemma:stochastic_convergence_5}-\eqref{proof_lemma:stochastic_convergence_10}, we finish the proof of \ref{proof_lemma:stochastic_convergence_3} and the lemma follows.    \qed

\section{Commutator estimates}\label{appendix:commutator_estimates}

\begin{Lemma}[\cite{CTV15}]\label{lemma:commutator_est_0}
Let $s > 0$ and $f,g \in H^s \cap W^{1,\infty}$. We have that 
\[
\twonorm{\lb^s (fg)} \les \Linfnorm{f} \Hsnorm{g} + \Linfnorm{g} \Hsnorm{f},
\]
and 
\[
\twonorm{\lb^s(fg)-f \lb^s g} \les \Linfnorm{\nabla f} \twonorm{\lb^{s-1} g}+\twonorm{\lb^{s}f}\Linfnorm{g}.
\]
\end{Lemma}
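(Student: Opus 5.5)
The two estimates will be proved through the Littlewood--Paley characterization \eqref{def_Hs_norm} and Bony's decomposition (Lemma \ref{lemma:Bony}). By density we may take $f,g$ smooth. Since $\lb^s$ and $\lb^s-\Delta_{-1}\lb^s$ differ by an operator bounded on $L^2$, it suffices to bound $\bigl(\sum_q \l_q^{2s}\twonorm{\Dq(\cdot)}^2\bigr)^{1/2}$, treating the low block $q=-1$ separately with Hölder's inequality.

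\textbf{Product estimate.} Split $fg = \sum_l \Ss_{l-2}f\,\Dl g + \sum_l \Dl f\,\Ss_{l-2}g + \sum_l \Dl f\,\wtDl g$.
\begin{itemize}
\item In the first paraproduct, $\Dq$ only sees $|l-q|\le 2$. Since $\Linfnorm{\Ss_{l-2}f}\les\Linfnorm{f}$, we get $\l_q^s\twonorm{\Dq(\cdot)}\les \Linfnorm{f}\sum_{|l-q|\le2}\l_l^s\twonorm{\Dl g}$. Square-summing gives $\Linfnorm{f}\Hsnorm{g}$.
\item The second paraproduct is symmetric and gives $\Linfnorm{g}\Hsnorm{f}$.
\item For the remainder, $l\ge q-2$. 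Bound $\twonorm{\Dl f\,\wtDl g}\les \twonorm{\Dl f}\Linfnorm{g}$, so $\l_q^s\twonorm{\Dq(\cdot)}\les\Linfnorm{g}\sum_{l\ge q-2}\l_{q-l}^{s}\,\l_l^s\twonorm{\Dl f}$. Since $s>0$, the kernel $\l_{q-l}^s\mathbbm 1_{l\ge q-2}$ is summable, and Young's inequality for discrete convolutions gives $\Linfnorm{g}\Hsnorm{f}$.
\end{itemize}

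\textbf{Commutator estimate.} Write
\[
\lb^s(fg)-f\lb^s g=\sum_l [\lb^s,\Ss_{l-2}f]\Dl g+\lb^s\Bigl(\sum_l\Dl f\,\Ss_{l-2}g+\sum_l \Dl f\,\wtDl g\Bigr)-\Bigl(\sum_l \Dl f\,\Ss_{l-2}\lb^s g+\sum_l\Dl f\,\wtDl\lb^s g\Bigr).
\]
\begin{itemize}
\item The second and third groups are paraproducts of high frequencies of $f$ against $g$. As in the product estimate, with $g\in L^\infty$, they are bounded by $\twonorm{\lb^s f}\Linfnorm{g}$.
\item For the $\lb^s$ on $\Ss_{l-2}g$ and $\wtDl g$, Bernstein gives $\Linfnorm{\Ss_{l-2}\lb^s g}\les \l_l^{s}\Linfnorm{g}$. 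The factor $\l_l^s$ is then absorbed by $\twonorm{\Dl f}$, giving $\twonorm{\lb^s f}\Linfnorm{g}$.
\item The low-frequency part of $f$, where $\lb^s$ acts on $\Dl f$ with $l\le 1$, is controlled by $\Linfnorm{\nabla f}\,\twonorm{g}$ type terms. These are harmless after reducing to mean-zero $f$ (constants commute).
\end{itemize}

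\textbf{Main obstacle: the first paraproduct term.} This is the commutator $[\lb^s,\Ss_{l-2}f]\Dl g$ with $|q-l|\le2$. I would write $\Dq\lb^s$ localized at frequency $\l_q$ as convolution with a kernel $h_q(x)=\l_q^{3}\l_q^{s}h(\l_q x)$, where $h$ is Schwartz, built from $|\xi|^s\varphi(\xi)$ on an annulus. Then the commutator equals
\[
\int h_q(x-y)\bigl(\Ss_{l-2}f(y)-\Ss_{l-2}f(x)\bigr)\Dl g(y)\,dy .
\]
By the mean value theorem,
\[
\twonorm{[\Dq\lb^s,\Ss_{l-2}f]\Dl g}\les \l_q^{s-1}\Linfnorm{\nabla f}\twonorm{\Dl g}\,\|\,|x|\,h\|_{L^1}.
\]
Summing over $|q-l|\le2$ and square-summing gives $\Linfnorm{\nabla f}\twonorm{\lb^{s-1}g}$; this is the same mechanism as Lemma \ref{lemma:commutator_est_3}. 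The delicate points are two. First, the commutator must be taken with $\Dq\lb^s$ rather than $\lb^s$ alone, since the symbol $|\xi|^s$ is not smooth at $0$. Second, the pieces where $\Dq$ falls on products without commuting must be accounted for. I would handle both by inserting $\wtDq$ and using that $\Ss_{l-2}f\,\Dl g$ is spectrally supported in an annulus of size $\l_l$.
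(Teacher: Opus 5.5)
\textbf{Gap: the resonant term in $f\lb^s g$.} The paper gives no proof of this lemma; it only cites \cite{CTV15}, which in turn relies on the Kato--Ponce/Kenig--Ponce--Vega commutator estimate. Your product estimate is correct. So is your treatment of the low--high commutator $\sum_l[\lb^s,\Ss_{l-2}f]\Dl g$: a smooth-symbol kernel, the mean value theorem, then almost orthogonality in $l$. The gap is in the third group, specifically the resonant piece
\[
R:=\sum_{l}\Dl f\,\wtDl\lb^s g .
\]
You argue that $\Linfnorm{\wtDl\lb^s g}\les\l_l^s\Linfnorm{g}$ and that $\l_l^s$ is ``absorbed by $\twonorm{\Dl f}$''. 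That works for the high--low piece $\sum_l\Dl f\,\Ss_{l-2}\lb^s g$, but only because each of its summands is Fourier-supported in an annulus of size $\l_l$, so the blocks are almost orthogonal. The summands of $R$ are supported in balls $|\xi|\les\l_l$, so there is no orthogonality.

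\begin{itemize}
\item Adding the block bounds gives only $\Linfnorm{g}\sum_l\l_l^s\twonorm{\Dl f}$. This is a $B^s_{2,1}$ norm of $f$, strictly stronger than $H^s$.
\item Localizing the output instead gives $\twonorm{\Dq R}\les\Linfnorm{g}\sum_{l\ge q-2}\l_l^s\twonorm{\Dl f}$, with no decaying factor $\l_{q-l}^{s}$. In your product estimate that factor came from measuring the output in $H^s$; here the output is in $L^2$ and all $s$ derivatives sit on $g$. Moreover $\sum_q\bigl(\sum_{l\ge q-2}a_l\bigr)^2$ is not controlled by $\sum_l a_l^2$; take $a_l=1/l$.
\item Moving one derivative onto $f$, to use $\Linfnorm{\nabla f}\twonorm{\lb^{s-1}g}$, hits the same $\ell^1$ summation in $l$.
\end{itemize}
This is exactly the borderline remainder in Bony's calculus, with total regularity $0+0$, where Hölder plus Bernstein does not suffice.

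\textbf{How to close it.} You need an $L^2\times L^\infty\to L^2$ bound for this resonant bilinear operator, which is genuinely harmonic-analytic input. Either route works:
\begin{itemize}
\item \emph{Coifman--Meyer.} Take $f$ mean zero and write $R=T_\sigma(\lb^s f,g)$ with $\sigma(\xi,\eta)=\sum_l\varphi_l(\xi)|\xi|^{-s}\widetilde\varphi_l(\eta)|\eta|^{s}$, where $\widetilde\varphi_l$ is the symbol of $\wtDl$. On its support $|\xi|\sim|\eta|\sim\l_l$, so $\sigma$ is a Coifman--Meyer symbol and the theorem gives $\twonorm{R}\les\twonorm{\lb^s f}\Linfnorm{g}$.
\item \emph{Duality and a Carleson-measure estimate.} For $h\in L^2$ and a fixed large $N$, $\langle R,h\rangle=\sum_l\langle\l_l^s\Dl f,\,(\l_l^{-s}\wtDl\lb^s g)\,\Ss_{l+N}h\rangle$. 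Apply Cauchy--Schwarz in $(x,l)$ and then
\[
\sum_l\int|\l_l^{-s}\wtDl\lb^s g|^2\,|\Ss_{l+N}h|^2\,dx\les\|g\|_{\mathrm{BMO}}^2\twonorm{h}^2 .
\]
\end{itemize}

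\textbf{Smaller point.} Your ``low-frequency'' bullet produces a term $\Linfnorm{\nabla f}\twonorm{g}$, which is not on the right-hand side; for $s<1$ it is not dominated by $\twonorm{\lb^{s-1}g}$. On $\T^3$ the block $\Delta_{-1}$ is just the mean, and the mean of $f$ commutes with $\lb^s$. So after subtracting it, that bullet is unnecessary rather than needed.
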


\begin{Lemma}\label{lemma:commutator_est_1}
Let $s > \frac{5}{2}$ and $\gm > -s$. Then for any smooth divergence- free vector field $b$ and $u$ on $\T^3$, we have that
\[
\twonorm{\lb^{\gm} \left [\lb^s, b \cdot \nabla  \right ] u } \les \Hsnorm{b} \|u \|_{H^{s+\gm}(\T^3)} + \|b \|_{H^{s+\gm}(\T^3)} \Hsnorm{u}.
\]
\end{Lemma}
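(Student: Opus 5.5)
The plan is to prove Lemma~\ref{lemma:commutator_est_1} on the Fourier side. Write $b\cdot\nabla u=\sum_j b^j\partial_j u$. Since $b$ is divergence free, this also equals $\sum_j \partial_j(b^j u)$. On $\T^3$ the Fourier coefficient of the commutator is
\[
\widehat{\bigl([\lb^s,b\cdot\nabla]u\bigr)}(k)=\sum_{m+n=k}\bigl(|k|^s-|n|^s\bigr)\,\bigl(\hat b(m)\cdot 2\pi i n\bigr)\,\hat u(n).
\]
Applying $\lb^\gm$ multiplies this by $|k|^\gm$. The whole proof rests on the elementary inequality
\[
\bigl||k|^s-|n|^s\bigr|\les |m|\bigl(|m|^{s-1}+|n|^{s-1}\bigr),\qquad m=k-n .
\]
The factor $|m|$ lets one derivative fall on $b$ instead of $u$. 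I would then split the sum over $(m,n)$ into two regions, $|m|\le |n|/2$ (low-frequency $b$, so $|k|\sim|n|$) and $|m|>|n|/2$ (so $|k|\les|m|$). This is the same high/low decomposition as Bony's formula, Lemma~\ref{lemma:Bony}, but done with explicit frequencies.

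In the region $|m|\le|n|/2$, the symbol is bounded by $|k|^\gm |m|\,|n|^{s-1}|n|\sim |m|\,|n|^{s+\gm}$, using $|k|\sim|n|$; this holds for either sign of $\gm$. The resulting bilinear form is dominated by the convolution $(|m||\hat b|)*(|n|^{s+\gm}|\hat u|)$. Young's inequality $\ell^1*\ell^2\subset\ell^2$ then gives the bound $\|\,|m|\hat b\|_{\ell^1}\|u\|_{H^{s+\gm}}$. Since $s-1>3/2$, Cauchy--Schwarz gives $\|\,|m|\hat b\|_{\ell^1}\les\Hsnorm{b}$, which produces the first term $\Hsnorm{b}\|u\|_{H^{s+\gm}}$.

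In the region $|m|>|n|/2$, I bound the two pieces $|k|^s$ and $|n|^s$ separately, without trying to exploit a difference. The symbol is then at most
\[
|k|^\gm\bigl(|m|^s+|n|^s\bigr)|n|\les |k|^\gm |m|^s |n| .
\]
If $\gm\ge0$, I use $|k|^\gm\les|m|^\gm$, which gives $|m|^{s+\gm}|n|$. Young's inequality then yields $\|b\|_{H^{s+\gm}}\,\|\,|n|\hat u\|_{\ell^1}\les\|b\|_{H^{s+\gm}}\Hsnorm{u}$, again using $s-1>3/2$. If $-s<\gm<0$, then $|k|^\gm$ is no longer controlled by $|m|^\gm$ because $k$ may be small. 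Instead I would use the divergence form $\partial_j(b^ju)$ to move the derivative onto $k$, so the factor $|n|$ becomes $|k|$. The weight is then $|k|^{1+\gm}|m|^s$.

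When $1+\gm\ge0$, the factor $|k|^{1+\gm}$ is bounded by $|m|^{1+\gm}$. I then redistribute powers as $|m|^{s+\gm}\cdot|m|^{1}$, with the extra $|m|$ traded against $|n|\les|m|$ and summability in $n$. When $1+\gm<0$, I keep $|k|^{1+\gm}$ as a factor that is square-summable in $k$, since $2(1+\gm)+\dots$ provides the needed decay once the power of $|m|$ is reduced via $|m|\sim\max$. In each subcase the exponents are chosen so that $b$ carries at most $s+\gm$ derivatives and $u$ carries $s$ derivatives in $L^2$, with the remaining power summable using $s>5/2$.

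The main obstacle is exactly this case of negative $\gm$ in the high-frequency-$b$ region. There the output frequency $k$ can be much smaller than the input frequencies $m$ and $n$, and the bookkeeping must use $\gm>-s$ to keep every weight summable. I would first try to treat it with the divergence-free rewriting together with Bernstein's inequality, Lemma~\ref{lemma:Bernstein}, at the dyadic level: $\|\Delta_q(\Delta_l b\,\wtDl u)\|_{L^2}\les\l_q^{3/2}\|\Delta_l b\|_{L^2}\|\wtDl u\|_{L^2}$. I would then sum over $l\ge q-2$, using the weight $\l_q^{\gm+1+3/2}\l_l^{-(s+\gm)-s}$. The condition $\gm>-s$ together with $s>5/2$ is what should make this double sum converge.
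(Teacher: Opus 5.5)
The paper gives no argument of its own here; it quotes Lemma B.1 of \cite{HLL26}, so your proposal has to stand on its own. Your low-frequency-$b$ region and your case $\gamma\ge 0$ are correct. The treatment of $-s<\gamma<0$ on $\{|m|>|n|/2\}$, however, has a genuine gap.

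You pass to the divergence form on that whole region because ``$k$ may be small''. But $k$ can be small only when $|n|\sim|m|$. Where $|n|\ll|m|$ one has $|k|\sim|m|$, and there the switch is harmful. Bounding $|k|^{1+\gamma}\le|m|^{1+\gamma}$ leaves the weight $|m|^{s+1+\gamma}$. The extra $|m|$ cannot be ``traded against $|n|\lesssim|m|$'', because that inequality points the wrong way. Concretely, with $u$ at frequency $1$ and $b$ at frequency $N$, your bound costs $\|b\|_{H^{s+1+\gamma}}$. For $\gamma<-1$ you claim $|k|^{1+\gamma}$ is square-summable, but on $\Z^3$ that holds only when $\gamma<-5/2$. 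You never identify the extra decay in $k$ needed for $-5/2\le\gamma<-1$.

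The repair is to split $\{|m|>|n|/2\}$ further into $\{|k|\ge|m|/4\}$ and $\{|k|<|m|/4\}$.

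On the first piece, keep the factor $|n|$ from $\hat b(m)\cdot n$ and use $|k|^\gamma\lesssim|m|^\gamma$, which is valid for either sign of $\gamma$. This gives the weight $|m|^{s+\gamma}|n|$, and Young's inequality finishes exactly as in your $\gamma\ge 0$ case.

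On the second piece, $\frac34|m|<|n|<\frac54|m|$ and $|n|>3|k|$. Use $\hat b(m)\cdot n=\hat b(m)\cdot k$ together with $s+\gamma>0$ to get $|k|^{1+\gamma}|m|^{s}\lesssim|k|^{1+\gamma}|m|^{s+\gamma}|n|^{-\gamma}\lesssim|k|^{1-s}|m|^{s+\gamma}|n|^{s}$. Cauchy--Schwarz in the convolution then bounds the $k$-th coefficient by $|k|^{1-s}\|b\|_{H^{s+\gamma}}\|u\|_{H^s}$. Finally, $\sum_{k\neq 0}|k|^{2-2s}<\infty$ precisely because $s>5/2$. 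The $k=0$ coefficient vanishes since $\hat b(m)\cdot m=0$, so $\Lambda^\gamma$ is well defined for $\gamma<0$.

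This is the mechanism of your closing dyadic sketch, but the weight there should be $\lambda_q^{\gamma+5/2}\lambda_l^{-(s+\gamma)}$. You dropped the factor $\lambda_l^{s}$ produced by $\Lambda^s$ acting on $\widetilde{\Delta}_l u$. With that factor restored, the sum over $l\ge q-2$ converges because $s+\gamma>0$, and it leaves $\lambda_q^{5/2-s}\in\ell^2_q$ because $s>5/2$. With the weight as you wrote it, only $2s+\gamma>0$ would be used, so the hypothesis $\gamma>-s$ would never actually enter.
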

\begin{proof}
See \cite[Lemma B.1]{HLL26}. 
\end{proof}

\begin{Lemma}\label{lemma:commutator_est_2}
Let $s > \frac{5}{2}$ and $b$ be a smooth divergence-free vector field on $\T^3$ with zero mean. Then it holds that 
\[
\twonorm{\left[ \left[\lb^s, b\cdot \nabla \right ], b \cdot \nabla \right] u } \les \| b\|^2_{H^{s+1}(\T^3)} \Hsnorm{u}.
\]
\end{Lemma}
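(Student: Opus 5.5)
The plan is to expand the double commutator algebraically and reduce it to single-commutator estimates of the type in Lemma \ref{lemma:commutator_est_1}, together with the product estimates of Lemma \ref{lemma:commutator_est_0}. Write $\Tt = b\cdot\nabla$. Then
\[
\left[\left[\lb^s,\Tt\right],\Tt\right]u = \lb^s \Tt\Tt u - 2\Tt\lb^s\Tt u + \Tt\Tt\lb^s u .
\]
The point is that the top-order contributions cancel. Formally $[\lb^s,\Tt]$ acts like $s\,(\nabla b)\cdot\nabla\lb^{s-2}\nabla$, an operator of order $s$. Commuting it once more with $\Tt$ costs one derivative on $b$ but gains one on $u$, so the result should be of order $s$ in $u$. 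That would give an $L^2$ bound by $\Hsnorm{u}$, with $b$ carrying roughly $s+1$ derivatives.

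Concretely, I would work on the Fourier side, or equivalently with the paraproduct of Lemma \ref{lemma:Bony}, and write $[\lb^s,\Tt]u = \sum_l [\lb^s, b\cdot\nabla]$ split into three pieces:
\begin{enumerate}[label=(\roman*)]
\item the low-frequency-$b$ piece $[\lb^s, \Ss_{l-2}b\cdot\nabla]\Dl u$;
\item the high-frequency-$b$ piece $\lb^s(\Dl b\cdot\nabla \Ss_{l-2}u) - \Dl b\cdot\nabla\lb^s \Ss_{l-2}u$;
\item the resonant piece.
\end{enumerate}
Each piece is then commuted again with $\Tt$.

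In the high-frequency and resonant pieces, the derivatives can be placed on $b$ directly. Each of these terms is a product in which $b$ appears twice, with total derivative count at most $s+2$ shared between the two copies of $b$, and $u$ appears with at most $s$ derivatives. Using $H^{s+1}\hookrightarrow W^{2,\infty}$ (valid since $s+1-\tfrac32>2$) and Lemma \ref{lemma:commutator_est_0}, these terms are bounded by $\|b\|_{H^{s+1}}^2\Hsnorm{u}$.

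For the low-frequency piece, I would use the symbol expansion $|k|^s - |k'|^s = s|k'|^{s-2}k'\cdot(k-k') + R(k,k')$. Here $|k-k'|\ll|k'|$ and
\[
|R(k,k')| \les |k-k'|^2|k'|^{s-2}.
\]
The first-order term contributes an operator $s(\partial_j b_i)\,\partial_i\partial_j\lb^{s-2}$ applied to $\Dl u$. Its commutator with $\Tt$ is computed exactly: it produces $(b\cdot\nabla\partial_j b_i)\partial_i\partial_j\lb^{s-2}u$, plus terms of the form $\partial b\,\partial b\,\nabla^2\lb^{s-2}u$, plus $\partial b\,[\nabla^2\lb^{s-2}, b\cdot\nabla]u$. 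All of these are $O(\|b\|_{W^{2,\infty}}^2+\|b\|_{W^{1,\infty}}^2)$ times $\Hsnorm{u}$, the last one by the order-$(s-1)$ commutator estimate. The remainder $R$ already gains two derivatives on $u$, so after commuting with $\Tt$ it is of order $s-1+1=s$.

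The main obstacle is making this symbol calculus rigorous with only Sobolev regularity of $b$. I would handle it by estimating the trilinear Fourier sum $\sum_{k,m,n}|\widehat b(m)||\widehat b(n)||\widehat u(k)|\,|\text{symbol}|$. The key step is to show, by a second-order Taylor expansion of $|\xi|^s$ in both $m$ and $n$, that the double-commutator symbol satisfies
\[
\bigl(|m|+|n|\bigr)^{2}|k|^{s} + |m|^{s+1}|n| + |n|^{s+1}|m| .
\]
The sum is then closed by Young's convolution inequality together with the summability of $\langle m\rangle^{-(s+1)}|m|^2|\widehat b(m)|$ in $\ell^1$. The zero-mean assumption on $b$ removes the $m=0$ mode, so that $\|b\|_{H^{s+1}}$ controls everything.
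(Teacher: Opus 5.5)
The paper gives no argument of its own for this lemma; it only cites \cite[Lemma B.2]{HLL26}. So your proposal has to stand on its own. Your plan (exact trilinear Fourier kernel, top-order cancellation, then Young's convolution inequality) is viable. However, the key step as you state it is false.

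Drop factors of $2\pi i$ and set $f=|\cdot|^s$. Let $\hat b(m)$ be the coefficient of the transport inside $[\Lambda^s,b\cdot\nabla]$, $\hat b(n)$ that of the outer transport, and $k$ the frequency of $u$. The exact kernel is
\begin{align*}
\sigma(m,n,k)&=\bigl(f(k+m+n)-f(k+n)\bigr)\bigl(\hat b(m)\cdot(k+n)\bigr)\bigl(\hat b(n)\cdot k\bigr)\\
&\quad-\bigl(f(k+m)-f(k)\bigr)\bigl(\hat b(n)\cdot(k+m)\bigr)\bigl(\hat b(m)\cdot k\bigr).
\end{align*}
Take $m=(M,0,0)$, $\hat b(m)=(0,1,0)$, $k=(0,K,0)$, $n=(0,0,1)$, $\hat b(n)=(1,0,0)$, with $1\ll K\ll M$.
\begin{itemize}
\item The first term vanishes.
\item The second term is $\approx M^{s+1}K$.
\item The opposite ordering of $(m,n)$ contributes only $O(M^{s-1}K)$.
\end{itemize}
So the output coefficient genuinely has size $M^{s+1}K$. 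This is the term $-b_{\mathrm{low}}\cdot\nabla\bigl((\Lambda^s b_{\mathrm{high}})\cdot\nabla u\bigr)$: one copy of $b$ carries $s+1$ derivatives and $u$ carries one.

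Your bound $(|m|+|n|)^2|k|^s+|m|^{s+1}|n|+|n|^{s+1}|m|$ is only of size $M^2K^s+M^{s+1}$ there. That is too small by the factor $\min\{K,(M/K)^{s-1}\}$. You assigned the leftover derivative to the other copy of $b$ instead of to $u$. In fact every term carries a factor $\hat b(\cdot)\cdot k$ from the innermost transport, so $u$ is always differentiated.

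Your paraproduct bookkeeping for the high-frequency-$b$ piece has the same misattribution. ``At most $s+2$ derivatives on the two $b$'s, at most $s$ on $u$'' is not the right criterion. A naive distribution produces $(\Lambda^s b)\,b\,\nabla^2 u$, which needs $\nabla^2u\in L^\infty$; that is unavailable for $s\le\frac{7}{2}$ unless you move the extra derivative onto the high-frequency factor with Bernstein.

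The repair is to regroup the kernel explicitly:
\begin{align*}
\sigma&=\bigl[f(k+m+n)-f(k+n)-f(k+m)+f(k)\bigr]\bigl(\hat b(m)\cdot k\bigr)\bigl(\hat b(n)\cdot k\bigr)\\
&\quad+\bigl(f(k+m+n)-f(k+n)\bigr)\bigl(\hat b(m)\cdot n\bigr)\bigl(\hat b(n)\cdot k\bigr)-\bigl(f(k+m)-f(k)\bigr)\bigl(\hat b(n)\cdot m\bigr)\bigl(\hat b(m)\cdot k\bigr).
\end{align*}
This is where the top-order cancellation actually lives. The two middle terms $b\cdot\nabla\,\Lambda^s\,b\cdot\nabla$ come with different linear prefactors, so a Taylor expansion of $|\xi|^s$ alone does not exhibit it.

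Bound the pieces as follows.
\begin{itemize}
\item The bracket is a mixed second difference, $\lesssim |m||n|(|k|+|m|+|n|)^{s-2}$, using $s\ge 2$.
\item The first differences are $\lesssim |m|(|k|+|m|+|n|)^{s-1}$.
\end{itemize}
Together these give
\[
|\sigma|\lesssim\Bigl[(|m|+|n|)^2|k|^s+\bigl(|m|^{s+1}+|n|^{s+1}\bigr)|k|\Bigr]|\hat b(m)|\,|\hat b(n)|.
\]
Young's inequality ($\ell^2*\ell^1*\ell^1\to\ell^2$) then closes the estimate using $\langle m\rangle^{s+1}\hat b,\ \langle k\rangle^{s}\hat u\in\ell^2$ and $\langle m\rangle^{2}\hat b,\ \langle k\rangle\hat u\in\ell^1$. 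The two $\ell^1$ facts follow from Cauchy--Schwarz exactly when $s>\frac{5}{2}$. This is where both the hypothesis on $s$ and the $H^{s+1}$ norm of $b$ enter. The $\ell^1$ fact you quote, for $\langle m\rangle^{-(s+1)}|m|^2|\hat b(m)|$, is not the relevant one.
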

\begin{proof}
See \cite[Lemma B.2]{HLL26}.
\end{proof}

\begin{Lemma}\label{lemma:commutator_est_3}
For tempered distributions $u$ and $v$, it follows that 
\[
\rnorm{\left [ \Dq, \Ss_{l-2} u \cdot \nabla \right] \Dl v } \les \Linfnorm{\nabla \Ss_{l-2} u} \rnorm{\Dl v},
\]
for any $r \in \left (1, \infty \right)$.
\end{Lemma}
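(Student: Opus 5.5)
The statement to prove is Lemma~\ref{lemma:commutator_est_3}:
\[
\rnorm{[\Dq,\Ss_{l-2}u\cdot\nabla]\Dl v}\les\Linfnorm{\nabla\Ss_{l-2}u}\rnorm{\Dl v}, \qquad r\in(1,\infty).
\]
I will prove it by writing the commutator as an explicit integral operator with the Littlewood--Paley kernel and using a first-order Taylor expansion. This is the standard argument from Bahouri--Chemin--Danchin.

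\textbf{Kernel form of the commutator.} Let $h_q=\check\varphi_q$ denote the kernel of $\Dq$, periodized on $\T^3$. Write $w=\Ss_{l-2}u$ and $f=\Dl v$. Then
\[
[\Dq,w\cdot\nabla]f(x)=\int h_q(x-y)\bigl(w(y)-w(x)\bigr)\cdot\nabla f(y)\,dy .
\]

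\textbf{Moving the derivative onto the kernel.} Integrate by parts in $y$. The term in which the derivative falls on $w(y)$ is $-\int h_q(x-y)(\nabla\cdot w)(y)f(y)\,dy$. It is bounded pointwise by $\Linfnorm{\nabla w}\,(|h_q|*|f|)$, and $\|h_q\|_{L^1}\les 1$ uniformly in $q$.

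The remaining term is $\int \nabla h_q(x-y)\cdot\bigl(w(y)-w(x)\bigr)f(y)\,dy$. By the mean value theorem, $|w(y)-w(x)|\le\Linfnorm{\nabla w}|x-y|$. This term is therefore bounded by $\Linfnorm{\nabla w}\,(|\cdot||\nabla h_q|)*|f|$.

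\textbf{Uniform kernel bound and Young's inequality.} By scaling, $h_q(z)=\l_q^3 h(\l_q z)$ for $q\ge0$. Hence
\[
\bigl\||z|\,|\nabla h_q(z)|\bigr\|_{L^1}=\bigl\||z|\,|\nabla h|\bigr\|_{L^1}<\infty,
\]
uniformly in $q$, because $h$ is Schwartz; the case $q=-1$ is a single fixed kernel. Young's inequality $L^1*L^r\subset L^r$ then gives the claim for every $r\in[1,\infty]$, in particular for $r\in(1,\infty)$.

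\textbf{Main obstacle: periodization.} On $\T^3$ the kernel is $\sum_{m\in\Z^3}h_q(x+m)$, and the difference $w(y)-w(x)$ must be computed along the unperiodized variable. I would handle this by lifting $w$ and $f$ to periodic functions on $\R^3$ and running the whole argument there. The convolution against the Schwartz kernel on $\R^3$ of a periodic function reproduces $\Dq$ on $\T^3$. The $L^r(\T^3)$ bound follows from Young's inequality applied on one period together with the $L^1(\R^3)$ bound of $|z||\nabla h_q|$.

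Note that no use is made of the frequency localization of $w$ or of the restriction $|q-l|\le2$. These are only needed in the applications to $J_{11}$ and $J_{22}$.
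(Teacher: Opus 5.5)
Your proof is correct. It is the standard kernel representation, mean-value and Young argument behind the result the paper simply cites from \cite{Dai16} without proving it, and you handle the lifting to $\R^3$ for the torus properly. Integrating by parts to put the derivative on $h_q$ makes the bound hold for every $f\in L^r$ and all $q,l$. The variant that keeps $\nabla$ on $\Dl v$ (as in \cite{bahouri2011fourier}) instead needs Bernstein's inequality, plus the observation that the commutator vanishes unless $|q-l|$ is bounded, so that $\l_l\l_q^{-1}\les 1$.
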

\begin{proof}
See \cite[Lemma 2.5]{Dai16}.
\end{proof}

\end{document}